\documentclass[11pt]{article}\textwidth 160mm\textheight 235mm
\oddsidemargin-2mm\evensidemargin-2mm\topmargin-10mm
\usepackage{amsfonts}
\usepackage{amssymb}
\usepackage{graphicx}
\usepackage{amsmath}
\usepackage{amsthm}
\usepackage{enumitem}
\usepackage{tikz}
\usepackage{mathrsfs}
\usepackage{cancel}
\usepackage{todonotes}
\usetikzlibrary{matrix}
\usetikzlibrary{arrows,shapes}
\usepackage{cite}
\usepackage{hyperref}
\hypersetup{colorlinks=true, urlcolor=blue}
\expandafter\let\expandafter\oldproof\csname\string\proof\endcsname
\let\oldendproof\endproof
\renewenvironment{proof}[1][\proofname]{%
  \oldproof[\ttfamily \scshape \bf #1. ]%
}{\oldendproof}
\def\O{{\cal O}}

\def\B{\mathbb{B}}
\def\R{{\rm I\!R}}
\def\N{{\rm I\!N}}
\def\ox{\bar{x}}
\def\oy{\bar{y}}
\def\oz{\bar{z}}
\def\ov{\bar{v}}

\def\op{\bar{p}}

\def\ss{\scriptsize }


\def\ve{\varepsilon}

\def\emp{\emptyset}
\def\dist{{\rm dist}}

\def\rge{{\rm rge\,}}

\def\Lm{{\Lambda}}
\def\tto{\rightrightarrows}

\def\d{{\rm d}}
\def\sub{\partial}

\def\Hat{\widehat}

\def\Bar{\overline}
\def\ra{\rangle}
\def\la{\langle}
\def\ve{\varepsilon}
\def\omu{\bar{\mu}}

\def\prox {\mbox{\rm prox}\,}

\def\gph{\mbox{\rm gph}\,}
\def\epi{\mbox{\rm epi}\,}
\def\mini{\mbox{\rm minimize}\;\,}

\def\dom{\mbox{\rm dom}\,}

\def\ker{\mbox{\rm ker}\,}

\def\reg{\mbox{\rm reg}\,}

\def\dn{\downarrow}
\def\O{\Omega}

\def\ph{\varphi}

\def\emp{\emptyset}

\def\oR{\Bar{\R}}
\def\lm{\lambda}
\def\olm{\bar\lambda}

\def\dd{\delta}
\def\al{\alpha}
\def\Th{\Theta}

\def\sm{\hbox{${1\over 2}$}}

\def \b{{\}_{k\in\N}}}
\def\L{{\mathscr{L}}}
\def\D{{\mathscr{D}}}
\def\sce{\setcounter{equation}{0}}


\begin{document}
\vspace*{0.5in}
\begin{center}
{\bf PRIMAL SUPERLINEAR CONVERGENCE OF SQP METHODS IN PIECEWISE LINEAR-QUADRATIC COMPOSITE OPTIMIZATION  }\\[1 ex]
 M. EBRAHIM SARABI\footnote{Department of Mathematics, Miami University, Oxford, OH 45065, USA (sarabim@miamioh.edu).}
\end{center}
\vspace*{0.05in}
\small{\bf Abstract.}  This paper  mainly  concerns with the primal superlinear convergence  of the quasi-Newton sequential quadratic programming (SQP) method for piecewise linear-quadratic 
composite optimization problems. We show that  the latter  primal superlinear convergence can be justified under the   noncriticality of Lagrange multipliers and  a version of the Dennis-Mor\'e condition.
Furthermore, we show that if we replace the noncriticality condition with  the second-order sufficient condition, this primal superlinear convergence is equivalent with an appropriate version of the Dennis-Mor\'e condition.
We also recover Bonnans' result in \cite{bo94} for the primal-dual superlinear of the basic SQP method for this class of composite problems under the second-order
sufficient condition and the uniqueness of Lagrange multipliers. To achieve these goals, we first obtain an extension of the reduction lemma for convex 
Piecewise linear-quadratic functions and then  provide a comprehensive analysis of the noncriticality of Lagrange multipliers for composite problems.
We also  establish certain  primal estimates for KKT systems of composite problems, which play a significant  role in our local convergence analysis of the quasi-Newton SQP method. 
 \\[1ex]
{\bf Key words.} SQP methods, primal superlinear convergence, noncriticality,  second-order sufficient conditions, piecewise linear-quadratic composite problems\\[1ex]
{\bf  Mathematics Subject Classification (2000)}  90C31, 65K99, 49J52, 49J53
\newtheorem{Theorem}{Theorem}[section]
\newtheorem{Proposition}[Theorem]{Proposition}
\newtheorem{Remark}[Theorem]{Remark}
\newtheorem{Lemma}[Theorem]{Lemma}
\newtheorem{Corollary}[Theorem]{Corollary}
\newtheorem{Definition}[Theorem]{Definition}
\newtheorem{Example}[Theorem]{Example}
\newtheorem{Algorithm}[Theorem]{Algorithm}
\renewcommand{\theequation}{{\thesection}.\arabic{equation}}
\renewcommand{\thefootnote}{\fnsymbol{footnote}}

\normalsize

\section{Introduction}\sce
This paper aims to present the local convergence analysis of the sequential quadratic programming  (SQP) methods for the composite optimization problem 
\begin{equation}\label{comp}
\mbox{minimize}\;\;\ph(x)+g(\Phi(x))\quad \mbox{subject to}\;\; x\in \Th,
\end{equation} 
where $\ph:\R^n\to \R$ and  $\Phi:\R^n\to \R^m$ are twice continuously differentiable, and where $\Th$ is 
a polyhedral convex set in $\R^n$ and $g:\R^m\to \oR$ is a convex piecewise linear-quadratic (CPLQ) function.
While the CPLQ function $g$ in \eqref{comp} gives significant flexibility to this problem to cover important classes of optimization problems including classical 
nonlinear programming problems (NLPs), constrained and unconstrained min-max optimization problems, and extended nonlinear programming problems, introduced by Rockafellar in \cite{r97},
the polyhedral convexity of $\Th$ therein makes it possible to  cover nonnegativity constraints, upper and lower bounds on variables, and also situations where 
we want to minimize a function over a linear subspace or an affine subset of $\R^n$. 

While different first- and second-order variational properties of   composite optimization problems have been extensively studied over the last three decades \cite{bs,l01, mmsmor,ms20,r88,r89}, 
  considerable efforts have been made recently toward  developing numerical algorithms, mostly first-order methods,   for this class of problems \cite{bcp,be,dl,lw}. 
  In this work, we present a systematic local convergence analysis  of the SQP methods  for \eqref{comp}. 
Recall that the principal idea of the SQP methods is to solve a sequence of quadratic approximations, called subproblems, whose optimal solutions converge 
under appropriate assumptions to an optimal solution to the original problem. For the composite problem \eqref{comp}, the aforementioned subproblem
at the current primal-dual iterate $(x_k,\lm_k)\in \R^n\times \R^m$  is formulated as 
\begin{equation}\label{subs1}
\begin{cases}
 \mbox{minimize}\;\;\ph(x_k)+ \la \nabla \ph(x_k), x-x_k\ra+ \sm \la H_k(x-x_k),x-x_k\ra+g\big(\Phi(x_k)+\nabla \Phi(x_k)(x-x_k)\big)\\
 \mbox{subject to}\quad  x\in \Th,
 \end{cases}
\end{equation}
where $H_k  $  is an $n\times n$ symmetric matrix for all $k\in \{0\}\cup\N$. In this paper, we study the SQP method for the composite problem \eqref{comp} in which  the matrix $H_k$
satisfies in one of the following conditions: 1) The matrix $H_k$ is of the form
\begin{equation}\label{hk}
H_k= \nabla_{xx}^2L(x_k,\lm_k), \;\; k\in \{0\}\cup\N,
\end{equation}
where $L$ is the Lagrangian associated with \eqref{comp}, defined by \eqref{lag}. When this choice of $H_k$ is utilized in the subproblem \eqref{subs1}, the method 
corresponds to the {\em basic}  SQP method.
2) The matrix $H_k$ is an   approximation of the Hessian matrix $\nabla_{xx}^2L(x_k,\lm_k)$ that satisfies the Dennis-Mor\'e condition 
\begin{equation}\label{dmc}
P_\D\big(\big( \nabla_{xx}^2L(x_k,\lm_k)- H_k\big)(x_{k+1}-x_k)\big)=o(\|x_{k+1}-x_k\|), 
\end{equation}
where the convex cone $\D$ is defined by \eqref{coned} and where $P_\D$ stands for the projection mapping  onto $\D$. When the  latter choice of $H_k$ 
is used in the subproblem \eqref{subs1}, the method corresponds to the {\em quasi-Newton} SQP method. Note that 
 the basic SQP method can be viewed as a natural extension of the Newton method  that is implemented  for the KKT system of 
 the composite problem \eqref{comp}. Indeed, the latter KKT system can be formulated as a generalized equation for which  the Newton method was generalized and studied  
 by Robinson in \cite{rob80};  see \cite[Section~3.1]{is14} for more details.
 
Remember that  given a primal-dual iterate $(x_k,\lm_k)$, the basic SQP method for the composite problem \eqref{comp} is updated to $(x_{k+1},\lm_{k+1})$, where 
 $x_{k+1}$ is  a stationary point of the subproblem \eqref{subs1} with $H_k$ taken from \eqref{hk} and   $\lm_{k+1}$ is  a  Lagrange multiplier associated with  $x_{k+1}$; see Algorithm~\eqref{sqpal}
 for more details.  For   NLPs,
the sharpest   results, established by Bonnans in \cite{bo94}, ensures  the superlinear/quadratic primal-dual convergence for the basic SQP method
under  the second-order sufficient condition and the strict  Mangasarian--Fromovitz constraint qualification -- the latter condition is known to be equivalent to the 
uniqueness of Lagrange multiplier for this class of problems. The pervious results for this framework, obtained by Robinson in \cite{rob74,rob80},  require a stronger version of the second-order sufficient condition
as well as the linear independence constraint qualification both  of which are strictly stronger than the corresponding assumptions, used by Bonnans in \cite{bo94}. Quite recently, 
Burke and Engle \cite[Theorem~7.3]{be} studied the local convergence analysis of the basic SQP method for  the composite 
optimization problem \eqref{comp} with $\Th=\R^n$  and showed that under the strong second-order sufficient condition, the nondegeneracy condition, and the strict complementary condition
this method is superlinear convergent. The approach utilized in \cite{be} is  based on the  local convergence analysis of the Newton method for generalized equations established under the 
strong metric regularity assumption (see \cite[page~194]{dr}) in \cite[Theorem~6D.2]{dr}. In this paper, we take a different path and show that such a primal-dual  superlinear convergence for \eqref{comp}
can be accomplished under some  less restrictive assumptions. Indeed, similar to Bonnans'  result for NLPs, we show that  the second-order sufficient condition and the uniqueness of Lagrange multipliers suffice 
to ensure the primal-dual superlinear convergence of the basic SQP method for \eqref{comp}. 

Our next goal  is  to pursue  conditions that ensure the primal superlinear convergence of  the quasi-Newton SQP method for \eqref{comp}. 
It is important to notice  that the primal superlinear convergence is important when the primal-dual superlinear  convergence is not available, which is the case for 
 the quasi-Newton SQP  method.  In the  local convergence analysis of the quasi-Newton SQP method, the primal-dual convergence is often assumed. Then, the main question is to find   conditions 
 under which the primal superlinear convergence of the method can be achieved. For NLPs, it was observed in \cite{fis12} that  a certain error bound, satisfied  under the second-order sufficient condition,  alone 
 suffices to accomplish  this goal. In particular, it was shown in \cite[Theorem~4.1]{fis12} that  if the second-order sufficient condition holds,   the primal superlinear convergent of the quasi-Newton SQP method for NLPs
 amounts to the Dennis-Mor\'e condition \eqref{dmc} for this class of problems. The interesting fact about this result is that neither the uniqueness of  Lagrange multiplier  nor  any constraint qualification was assumed. 
  This was a remarkable improvement from the previous results that in addition demanded the linear independent constraint qualification; see \cite[Theorem~15.7]{bgls}. Furthermore, it is shown in \cite{fis12} that 
  the second-order sufficient condition can be replaced by the noncriticality of Lagrange multipliers (see Definition~\ref{dcn}), which is less restrictive,  in the latter characterization. 
 In this paper we explore the possibility of  a similar characterization for the primal superlinear convergence of the quasi-Newton SQP method for the composite problem \eqref{comp}
 under the second-order sufficient condition and   the noncriticality assumption. Doing so requires understanding more about the noncriticality of Lagrange multipliers
 of the KKT system of the composite problem \eqref{comp} and obtaining  certain primal error bound estimates for the KKT system of \eqref{comp}. Not only do we achieve these requirements but also we 
 reveal that the proofs of   these results  mainly rely upon two  fundamental properties of the subgradient mappings of CPLQ functions: 1) the reduction lemma and 2) the outer Lipschitzian property.
 It is worth mentioning  that both properties come from the pioneering works of Robinson in \cite{rob1,rob2}. In particular, the reduction lemma,  established   in \cite[Proposition~4.4]{rob2}, 
 tells us that the graph of  the normal cone to a polyhedral convex set, coincides locally  with that of the normal cone to its critical cone; see Theorem~\ref{rlcp} for more details. 
 We will show in Section~\ref{sect02} that a similar observation holds for the subgradient mappings of CPLQ functions.

The rest of the  paper is organized as follows. Section~\ref{sect02} begins with recalling tools of variational analysis utilized throughout the paper and ends with 
a version of the reduction lemma for CPLQ functions. Section~\ref{sect03} presents a characterization of noncriticality of Lagrange multipliers for \eqref{comp}
and explores its relationship with the second-order sufficient condition. Section~\ref{sect04} is devoted to the study of certain primal estimates for the 
KKT system of \eqref{comp} under the second-order sufficient condition and the noncriticality of Lagrange multipliers. Section~\ref{sect05} provides the 
primal-dual superlinear convergence of the basic SQP method for \eqref{comp} under the second-order sufficient condition and the uniqueness of Lagrange multipliers. 
In particular, we show that under the latter conditions the subproblem \eqref{subs1} admits a local optimal solution. Finally, Section~\ref{sect06} establishes 
a characterization of the primal superlinear convergence of the quasi-Newton SQP method for \eqref{comp} via  the Dennis-Mor\'e condition \eqref{dmc}.   

\section{ Preliminary Definitions and Results}\sce \label{sect02}

In this section we first briefly review basic constructions of variational analysis and generalized differentiation employed in the paper;
see \cite{mor18,rw} for more detail. In what follows,   we 
 denote by $\B$  the closed unit ball in the space in question and by $\B_r(x):=x+r\B$ the closed ball centered at $x$ with radius $r>0$. 
 In the  product space $\R^n\times \R^m$, we use the norm $\|(w,u)\|=\sqrt{\|w\|^2+\|u\|^2}$ for any $(w,u)\in \R^n\times \R^m$.
 For any set $C$ in $\R^n$, its indicator function is defined by $\dd_C(x)=0$ for $x\in C$ and $\dd_C(x)=\infty$ otherwise. We denote by $\dist(x,C)$  the distance between $x\in \R^n$ and a set $C$.
 When $C$ is a cone, its polar cone is denoted by $C^*$.
 For a vector $w\in \R^n$, the subspace $\{tw |\, t\in \R\}$ is denoted by $[w]$. 
We write $x(t)=o(t)$ with $x(t)\in \R^n$ and $t>0$ to mean that ${\|x(t)\|}/{t}$ goes to $0$ as $t\dn 0$.
Finally, we denote by $\R_+$ (respectively,  $\R_-$) the set of non-negative (respectively, non-positive) real numbers.

Given a nonempty set $C\subset\R^n$ with $\ox\in C$, the  tangent cone $T_ C(\ox)$ to $C$ at $\ox$ is defined by
\begin{equation*}\label{2.5}
T_C(\ox)=\big\{w\in\R^n|\;\exists\,t_k{\dn}0,\;\;w_k\to w\;\;\mbox{ as }\;k\to\infty\;\;\mbox{with}\;\;\ox+t_kw_k\in C\big\}.
\end{equation*}
We say a tangent vector $w\in T_C(\ox)$ is {derivable} if there exist a constant  $\ve>0$ and an arc $\xi:[0,\ve]\to C$ such that  $\xi(0)=\ox$ and $\xi'_+(0)=w$, where $\xi'_+$ signifies the right derivative of $\xi$
at $0$, defined by 
$$
\xi'_+(0):=\lim_{t\dn 0}\frac{\xi(t)-\xi(0)}{t}.
$$
The set $C$ is called geometrically derivable at $\ox$ if every tangent vector $w$ to $C$ at $\ox$ is derivable.  Convex sets are 
important examples of geometrically derivable sets. The (Mordukhovich/limiting) {normal cone} to $C$ at $\ox\in C$ is given by
\begin{eqnarray*}\label{2.4}
N_C(\ox)=\big\{v\in\R^n\;\big|\;\exists\,x_k{\to}\ox,\;\;v_k\to v\;\;\mbox{with}\;\;x_k\in C,\;\;v_k\in \Hat N_C(x_k)\big\},
\end{eqnarray*}
where $\Hat N_C(x)=\big\{w\in\R^n\;\big|\; \la w,u-x\ra\le o(\|u-x\|)\;\;\mbox{for}\;u\in C\big\}$ is the regular normal cone to $C$ at $x$.
When $C$ is convex, both normal cones reduce to the normal cone in the sense of convex analysis.
Given the function $f:\R^n \to \oR:= (-\infty, \infty]$, its domain and epigraph are defined, respectively, by 
$$
\dom f =\big\{ x \in \R^n |\; f(x) < \infty \big \}\quad \mbox{and}\quad \epi f=\big \{(x,\al)\in \R^n\times \R|\, f(x)\le \al\big \}.
$$
When $f:\R^n \to \oR$ is  finite at $\ox$, the  (limiting) subdifferential  of $f$ at $\ox$   is defined  by 
\begin{equation*}\label{sub}
\partial f(\ox):=\big\{v\in\R^n\;\big|\;(v,-1)\in N_{\scriptsize{\epi f}}\big(\ox,f(\ox)\big)\big\},
\end{equation*}
which  reduces to the classical subgradient set of $f$ in the sense of convex analysis when $f$ is convex.
Similarly, one can define the regular subdifferential of $f$ at $\ox$, denoted by $\Hat \sub f(\ox)$, by replacing 
the   normal cone $N_{\scriptsize{\epi f}}\big(\ox,f(\ox)\big)$ in the above definition by $\Hat N_{\scriptsize{\epi f}}\big(\ox,f(\ox)\big)$.

Consider a set-valued mapping $F:\R^n\tto\R^m $ with its domain and graph  defined, respectively, by 
$$ \dom F=\big\{x\in\R^n |\;F(x)\ne\emp\big\}\quad \mbox{and}\quad \gph F=\big\{(x,y)\in\R^n\times\R^m |\;y\in F(x)\big\}.$$
 The {graphical derivative} of $F$ at $(\ox,\oy)\in \gph F$ is defined by 
\begin{equation*}\label{gder}
DF(\ox , \oy)(w)=\big\{u\in\R^m|\;(w,u)\in T_{\scriptsize{\gph F}}(\ox,\oy)\big\},\quad w\in\R^n.
\end{equation*}
The set-valued mapping  $F$ is called {proto-differentiable} at $\ox$ for $\oy$   if the set $\gph F$ is geometrically derivable at $(\ox,\oy)$. When this condition holds for $F$, we refer to $DF(\ox,\oy)$ as the {proto-derivative} of $F$ at $\ox$ for $\oy$.

Recall that a set-valued mapping $F:\R^n\tto\R^m$ is calm at  $(\ox,\oy)\in\gph F$ if there are a constant  $\ell \in \R_+$  and neighborhoods $U$ of $\ox$ and $V$ of $\oy$ so that 
\begin{equation*}\label{metreq}
F(x)\cap V\subset F(\ox)+\ell\|x-\ox\|\B\quad \;\;\mbox{for all}\; x\in U.
\end{equation*}
 The  set-valued mapping $F$ is called isolated calm  at $(\ox,\oy)$ if there are
 a constant $\ell\in \R_+$ and a neighborhood $U$ of $\ox$ and neighborhood $U$ of $\ox$  such that the inclusion  
 $$
 F(x)\cap V\subset \{\oy\}+\ell\|x-\ox\|\B\;\;\mbox{for all}\; x\in U
 $$
holds. It is known that these calmness properties amount to the following metric subregularity properties for inverse mappings, respectively. A set-valued $F$
is called metrically subregular at $(\ox,\oy)$ if there are a constant $\ell\in \R_+$ and neighborhood $U$ of $\ox$  such that 
$$
\dist\big(x,F^{-1}(y)\big)\le \ell\,\dist\big(y,F(\ox)\big)\quad \;\;\mbox{for all}\; x\in U.
$$
It is called strongly metrically subregular at this point if 
there are a constant $\ell\in \R_+$ and and neighborhood $U$ of $\ox$ such that 
$$
\|x-\ox\| \le \ell\, \dist\big(y,F(\ox)\big)\quad \;\;\mbox{for all}\; x\in U.
$$

Given  a function  $f:\R^n \to \oR$ and  a point $\ox$ with $f(\ox)$ finite, the subderivative function $\d f(\ox)\colon\R^n\to[-\infty,\infty]$ is defined by
$$
{\mathrm d}f(\ox)(w)=\liminf_{\substack{
   t\dn 0 \\
  u\to w
  }} {\frac{f(\ox+tu)-f(\ox)}{t}},\quad w\in \R^n.
$$
 The critical cone of $f$ at $\ox$ for $\ov$ with $(\ox,\ov)\in \gph \sub f$ is defined by 
$$
{K_f}(\ox,\ov):=\big\{w\in \R^n\big|\;\la\bar v,w\ra=\d f(\ox)(w)\big\}.
$$
When $f=\dd_C$, where $C$ is a nonempty subset of $\R^n$, the critical cone of $\dd_C$ at $\ox$ for $\ov$ is denoted by $K_C(\ox,\ov)$. In this case, the above definition of the critical cone of a function 
boils down to  the well known concept of a critical cone of a set (see \cite[page~109]{dr}), namely $K_C(\ox,\ov)=T_C(\ox)\cap [\ov]^\perp$ because of $\d \dd_C(\ox)=\dd_{T_C(\ox)}$.
Define the parametric  family of 
second-order difference quotients for $f$ at $\ox$ for $\ov\in \R^n$ by 
\begin{equation*}\label{lk01}
\Delta_t^2 f(\bar x , \ov)(w)=\dfrac{f(\ox+tw)-f(\ox)-t\langle \ov,\,w\rangle}{\frac{1}{2}t^2}\quad\quad\mbox{with}\;\;w\in \R^n, \;\;t>0.
\end{equation*}
If $f(\ox)$ is finite, then the {second subderivative} of $f$ at $\ox$ for $\ov$   is given by 
\begin{equation*}\label{ssd}
\d^2 f(\bar x , \ov)(w)= \liminf_{\substack{
   t\dn 0 \\
  w'\to w
  }} \Delta_t^2 f(\ox , \ov)(w'),\;\; w\in \R^n.
\end{equation*}

Following \cite[Definition~13.6]{rw}, a function $f:\R^n \to \oR$ is said to be {twice epi-differentiable} at $\bar x$ for $\ov\in\R^n$, with $f(\ox) $ finite, 
if for every sequence $t_k\downarrow 0$ and every $w\in\R^n$, there exists a sequence $w_k \to w$ such that
\begin{equation*}\label{dedf}
\d^2 f(\bar x,\ov)(w) = \lim_{k \to \infty} \Delta_{t_k}^2 f(\ox , \ov)(w_k).
\end{equation*}

 Recall  that a function $g:\R^m\to \oR$ is called  piecewise linear-quadratic if $\dom g = \cup_{i=1}^{s} C_i$, where  $s\in \N$ and $C_i $ are polyhedral convex  sets for all  $i = 1, \ldots, s$, and if $g$ has a representation of the form
\begin{equation} \label{PWLQ}
g(z) = \sm \langle A_i z ,z \rangle + \langle a_i ,z \rangle + \alpha_i  \quad \mbox{for all} \quad  z \in C_i,
\end{equation}
where $A_i$ is an $m \times m$ symmetric matrix, $a_i\in \R^m$, and $\alpha_i\in \R$ for $i = 1, \ldots, s$. Take $\oz\in \dom g$ and define the {\em active} indices  of the domain of $g$ at $\oz$ by
\begin{equation}\label{act}
I(\oz)=\big \{i\in \{1,\ldots, s\} |\, \oz\in C_i\big \}.
\end{equation}
When such a function is convex, 
it acquires   remarkable first- and second-order variational properties as reported below. The first part of the following result comes  from \cite[page~487]{rw} and the second part is taken from \cite[Proposition~10.21]{rw}.

\begin{Proposition}[first-order variational properties of CPLQ] \label{fop}Assume that $g:\R^m\to \oR$ is a CPLQ function with the representation \eqref{PWLQ}
and that $\oz\in \dom g$.
Then the following conditions hold:
 \begin{itemize}[noitemsep,topsep=0pt]
 \item [{\rm (a)}]  the subdifferential of $g$ at $\oz$ can be calculated by 
\begin{equation}\label{sub}
\sub g(\oz)=\bigcap_{i\in I(\oz)} \big \{v\in \R^m |\, v-A_i \oz- a_i\in N_{ C_i}(\oz)\big \};
\end{equation}

 \item [{\rm (b)}] the domain of the subderivative of $g$ at $\oz$ can be calculated by $\dom \d g(\oz)=T_{\scriptsize{\dom g}}(\oz)=\bigcup_{i\in I(\oz)} T_{\ss C_i}(\oz)$. 
 Moreover , if $w\in T_{\ss C_i}(\oz)$ for some $i\in I(\oz)$, then we have $\d g(\ox)(w)= \la A_i \oz+ a_i, w\ra$.
 \end{itemize}
\end{Proposition}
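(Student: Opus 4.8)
The plan is to establish part (b) first, reading off the subderivative $\d g(\oz)$ directly from the piecewise structure of $g$, and then to deduce part (a) from (b) via the convex-analytic duality between the subdifferential and the subderivative.

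For (b), I would begin by localizing: since each $C_i$ is closed and $\oz\notin C_i$ whenever $i\notin I(\oz)$, there is a ball around $\oz$ meeting only the pieces $C_i$ with $i\in I(\oz)$, and on each such $C_i$ the function $g$ coincides with the quadratic $q_i(z):=\sm\la A_iz,z\ra+\la a_i,z\ra+\alpha_i$, whose gradient at $\oz$ is $A_i\oz+a_i$. Fix $w\in\R^m$ and consider the one-sided directional derivative $g'(\oz;w):=\lim_{t\dn 0}t^{-1}\big(g(\oz+tw)-g(\oz)\big)$, which exists by monotonicity of the difference quotients of the convex function $g$. If $w\in T_{\ss C_i}(\oz)$ for some $i\in I(\oz)$, then polyhedrality of $C_i$ yields $\oz+tw\in C_i$ for all small $t>0$, so $g(\oz+tw)=q_i(\oz+tw)$ and therefore $g'(\oz;w)=\la A_i\oz+a_i,w\ra$; in particular this value does not depend on which active piece one picks. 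If $w$ lies in no such $T_{\ss C_i}(\oz)$, a pigeonhole argument along any sequence $t_k\dn 0$ shows $\oz+t_kw\notin\dom g$ eventually, whence $g'(\oz;w)=+\infty$. Thus $\dom g'(\oz;\cdot)=\bigcup_{i\in I(\oz)}T_{\ss C_i}(\oz)=T_{\scriptsize{\dom g}}(\oz)$, the last equality being the elementary union rule for tangent cones at a common point. To finish, note that $g'(\oz;\cdot)$ is linear on each closed polyhedral cone $T_{\ss C_i}(\oz)$ with matching values on overlaps, hence continuous on the closed set $T_{\scriptsize{\dom g}}(\oz)$ and $+\infty$ off it, so it is lower semicontinuous on $\R^m$; since $t^{-1}\big(g(\oz+tu)-g(\oz)\big)\ge g'(\oz;u)$ by convexity, one gets $\d g(\oz)(w)\ge\liminf_{u\to w}g'(\oz;u)=g'(\oz;w)$ for all $w$, while the reverse inequality is immediate, so $\d g(\oz)=g'(\oz;\cdot)$. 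This is exactly part (b).

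For (a), I would use the standard description $\sub g(\oz)=\{v\in\R^m\,|\,\la v,w\ra\le\d g(\oz)(w)\ \text{for all}\ w\in\R^m\}$, valid for convex $g$. Substituting the formula for $\d g(\oz)$ from (b), the inequality $\la v,w\ra\le\d g(\oz)(w)$ holds automatically unless $w\in T_{\ss C_i}(\oz)$ for some $i\in I(\oz)$, in which case it becomes $\la v-A_i\oz-a_i,w\ra\le 0$. Hence $v\in\sub g(\oz)$ if and only if $v-A_i\oz-a_i\in\big(T_{\ss C_i}(\oz)\big)^{*}=N_{C_i}(\oz)$ for every $i\in I(\oz)$, which is precisely the asserted intersection formula in (a).

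The only genuinely delicate step is the identification $\d g(\oz)=g'(\oz;\cdot)$: one must rule out that the $\liminf$ over $u\to w$ in the definition of the subderivative drops below the one-sided directional derivative at boundary directions of $T_{\scriptsize{\dom g}}(\oz)$, and one must check that the value $\la A_i\oz+a_i,w\ra$ is independent of the active piece containing $w$ in its tangent cone. Both facts hinge on the piecewise polyhedral--quadratic structure of $g$; in any case the statement is already recorded in \cite{rw}.
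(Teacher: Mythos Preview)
Your argument is correct. The paper itself does not supply a proof of this proposition at all: it simply records that part (a) can be found on page~487 of \cite{rw} and part (b) in \cite[Proposition~10.21]{rw}, and moves on. So there is nothing to compare against except the cited reference.

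Your route---compute the one-sided directional derivative piece by piece using polyhedrality of the $C_i$, identify it with the subderivative via lower semicontinuity, and then dualize to get the subdifferential---is exactly the argument behind those results in \cite{rw}, just made explicit. The two points you flag as delicate are handled correctly: the value $\la A_i\oz+a_i,w\ra$ is independent of the active piece because $g$ is single-valued on overlaps $C_i\cap C_j$, forcing $q_i$ and $q_j$ to agree there and hence to have matching directional derivatives at $\oz$ along any $w\in T_{C_i}(\oz)\cap T_{C_j}(\oz)$; and the lower semicontinuity of $g'(\oz;\cdot)$ follows, as you say, from its being a finite patchwork of linear functions on closed polyhedral cones with consistent boundary values, so the $\liminf$ in the subderivative cannot drop. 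One small wording comment: your ``pigeonhole'' step is really convexity---if $\oz+t_kw\in C_i$ for some $t_k\dn 0$ and $\oz\in C_i$, then the whole segment lies in $C_i$, so $w\in T_{C_i}(\oz)$ immediately.
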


Next, we recall second-order variational properties of CPLQ functions. 

\begin{Proposition}[second-order variational properties of CPLQ] \label{sop} Assume that $g:\R^m\to \oR$ is a CPLQ function with the representation \eqref{PWLQ}
and that $\oz\in \dom g$ and $\ov\in \sub g(\oz)$. Set $\ov_i:=\ov -A_i \oz -a_i$ for all $i\in I(\oz)$. 
Then the following conditions hold:
 \begin{itemize}[noitemsep,topsep=0pt]
 \item [{\rm (a)}]   the critical cone of $g$ at $\oz$ for $\ov$ has a representation of the form 
\begin{equation}\label{cc2}
{  K_g}(\oz,\ov)=\bigcup_{i\in I(\oz)} {K}_{C_i}(\oz,\ov_i)\quad \mbox{with}\;\;  {K}_{C_i}(\oz,\ov_i)= T_{ C_i}(\oz) \cap [\ov_i]^{\bot};
\end{equation}
 \item [{\rm (b)}] the function $g$ is twice epi-differentiable at $\oz$ for $\ov$ and its second subderivative  at this point can be calculated by 
\begin{equation}\label{pwfor}
 \d^2 g(\oz , \ov ) (w)  
=\begin{cases}
\la A_i w, w\ra&\mbox{if}\;\; w\in {K}_{C_i}(\oz,\ov_i),\\
\infty&\mbox{otherwise};
\end{cases}
\end{equation}
  \item [{\rm (c)}] the subgradient mapping $\sub g$ is proto-differenitable at $\oz$ for $\ov$ and  $\dom D(\sub g)(\oz,\ov)=K_g(\oz,\ov)$. Moreover,  for any $w\in K_g(\oz,\ov)$, the proto-derivative of $\sub g$ at $\oz$ for $\ov$ can be calculated by 
\begin{equation}\label{gdpw}
D(\sub g)(\oz,\ov)(w)=\bigcap_{i\in{\mathfrak J}(w)}\big\{u\in\R^m\big|\;u-A_i w\in N_{\ss K_{C_i}(\oz,\ov_i)}(w)\big\},
\end{equation}
 where ${\mathfrak J}(w):=\big\{i\in I(\oz)\big|\;w\in K_{C_i}(\oz,\ov_i)\big\}$. In particular, we have $D(\sub g)(\oz,\ov)(0)=K_g(\oz,\ov)^*$.
  \end{itemize}
\end{Proposition}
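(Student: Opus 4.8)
The plan is to derive all three parts from Proposition~\ref{fop} and the general second-order calculus, with part~(c) carrying the real weight. Write $\ov_i=\ov-A_i\oz-a_i$ for $i\in I(\oz)$; by \eqref{sub} each $\ov_i$ lies in $N_{C_i}(\oz)=T_{C_i}(\oz)^*$ (polyhedrality of $C_i$). For part~(a) I would unwind the definition: $w\in K_g(\oz,\ov)$ means $\d g(\oz)(w)=\la\ov,w\ra$, in particular $\d g(\oz)(w)<\infty$. By Proposition~\ref{fop}(b), finiteness forces $w\in T_{C_i}(\oz)$ for some $i\in I(\oz)$ and then $\d g(\oz)(w)=\la A_i\oz+a_i,w\ra$, so membership is equivalent to the existence of $i\in I(\oz)$ with $w\in T_{C_i}(\oz)$ and $\la\ov_i,w\ra=0$. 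Since $\ov_i\in T_{C_i}(\oz)^*$, we have $\la\ov_i,w\ra\le 0$ on $T_{C_i}(\oz)$, so $\la\ov_i,w\ra=0$ is the same as $w\in[\ov_i]^\bot$; hence $w\in T_{C_i}(\oz)\cap[\ov_i]^\bot=K_{C_i}(\oz,\ov_i)$ for some $i\in I(\oz)$, which is \eqref{cc2}. (Well-definedness of $\d g(\oz)(w)$ on overlapping pieces follows since $g$ is single-valued along $\oz+tw$ for small $t>0$, forcing the affine data of the relevant pieces to agree in such directions.)

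For part~(b), fix $i\in I(\oz)$ and $w\in T_{C_i}(\oz)$; polyhedrality yields $\ve>0$ with $\oz+tw\in C_i$ for $t\in[0,\ve]$, and inserting \eqref{PWLQ} and $g(\oz)=\sm\la A_i\oz,\oz\ra+\la a_i,\oz\ra+\alpha_i$ gives, for $0<t\le\ve$,
\begin{equation*}
\Delta_t^2 g(\oz,\ov)(w)=\la A_i w,w\ra-\frac{2\la\ov_i,w\ra}{t}.
\end{equation*}
When $w\in K_{C_i}(\oz,\ov_i)$ this is the constant $\la A_i w,w\ra$, so the constant sequence $w'\equiv w$ both attains the limit in the definition of twice epi-differentiability and shows $\d^2 g(\oz,\ov)(w)\le\la A_i w,w\ra$; when $w\in T_{C_i}(\oz)\setminus[\ov_i]^\bot$ the quotient tends to $+\infty$. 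To finish \eqref{pwfor} I would argue that only directions $w'$ lying in some $\bigcup_j K_{C_j}(\oz,\ov_j)$ yield a finite value of $\Delta_t^2 g(\oz,\ov)(w')$ for small $t$, that near a fixed $w$ the cones $T_{C_j}(\oz)$ containing $w'$ are among those containing $w$ (the fan $\{T_{C_j}(\oz)\}$ is locally constant), and that the quadratic data agree on common faces $K_{C_i}(\oz,\ov_i)\cap K_{C_j}(\oz,\ov_j)$; this pins the liminf to $\la A_iw,w\ra$ on $K_{C_i}(\oz,\ov_i)$ and to $+\infty$ otherwise, and twice epi-differentiability then follows from the constant-sequence observation. (Alternatively one may quote the PLQ case of the general second-order theory in \cite{rw}.)

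For part~(c), the engine is the equivalence, valid for convex $g$, between twice epi-differentiability of $g$ at $\oz$ for $\ov$, established in~(b), and proto-differentiability of $\sub g$ at $\oz$ for $\ov$, together with the identity $D(\sub g)(\oz,\ov)=\sub\big(\sm\,\d^2 g(\oz,\ov)\big)$; see \cite{rw}. By~(a) and~(b), $h:=\sm\,\d^2 g(\oz,\ov)$ is itself a CPLQ function: its domain is the convex cone $K_g(\oz,\ov)=\bigcup_{i\in I(\oz)}K_{C_i}(\oz,\ov_i)$, a finite union of polyhedral cones, on the piece $K_{C_i}(\oz,\ov_i)$ it equals $w\mapsto\sm\la A_iw,w\ra$, and it is convex because each $\Delta_t^2 g(\oz,\ov)(\cdot)$ is convex ($g$ composed with an affine map plus an affine function) and $h$ is their epi-limit. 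Applying Proposition~\ref{fop}(a) to $h$ — whose active-index set at $w$ is precisely $\mathfrak J(w)=\{i\in I(\oz):w\in K_{C_i}(\oz,\ov_i)\}$, with quadratic data $A_i$ and vanishing linear and constant terms — yields $\sub h(w)=\bigcap_{i\in\mathfrak J(w)}\{u\in\R^m:u-A_iw\in N_{K_{C_i}(\oz,\ov_i)}(w)\}$, which is \eqref{gdpw}, while $\dom D(\sub g)(\oz,\ov)=\dom\sub h=\dom h=K_g(\oz,\ov)$ since a convex CPLQ function has nonempty subdifferential throughout its domain. Evaluating at $w=0$ gives $\mathfrak J(0)=I(\oz)$, $A_i\cdot 0=0$, and $N_{K_{C_i}(\oz,\ov_i)}(0)=K_{C_i}(\oz,\ov_i)^*$, whence $D(\sub g)(\oz,\ov)(0)=\bigcap_{i\in I(\oz)}K_{C_i}(\oz,\ov_i)^*=\big(\bigcup_{i\in I(\oz)}K_{C_i}(\oz,\ov_i)\big)^*=K_g(\oz,\ov)^*$.

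The main obstacles I anticipate are: in~(b), controlling the liminf defining $\d^2 g(\oz,\ov)$ at directions sitting on the overlap of several quadratic pieces, which hinges on compatibility of the data of $g$ along shared faces and on the locally constant fan structure of $\{T_{C_i}(\oz)\}$; and in~(c), verifying that $\sm\,\d^2 g(\oz,\ov)$ is a bona fide convex CPLQ function so that Proposition~\ref{fop} can be reused, and invoking the twice-epi-differentiability/proto-differentiability correspondence in a form applicable to CPLQ (hence prox-regular, subdifferentially continuous) convex functions.
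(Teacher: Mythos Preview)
Your proposal is correct. For parts (a) and (b) your argument matches the paper's, which simply unwinds the definition via Proposition~\ref{fop}(b) for (a) and cites \cite[Proposition~13.9]{rw} for (b); your direct computation of the difference quotient in (b) is fine, and your caveat about controlling the liminf on overlapping pieces is exactly the technical point the citation to \cite{rw} absorbs.

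Part (c) is where your route genuinely differs from the paper's. The paper establishes proto-differentiability via \cite[Theorem~13.40]{rw} (as you do) but then pulls the formula \eqref{gdpw} and the computation of $D(\sub g)(\oz,\ov)(0)$ from external results \cite[Proposition~7.3 and Theorem~8.1]{mmsmor}. Your derivation is more self-contained: having recognized that $h=\sm\,\d^2 g(\oz,\ov)$ is itself a CPLQ function with pieces $K_{C_i}(\oz,\ov_i)$, quadratic data $A_i$, and vanishing linear/constant terms, you re-apply Proposition~\ref{fop}(a) to $h$ and read off \eqref{gdpw} directly, with the active-index set at $w$ being exactly $\mathfrak J(w)$. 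This bootstrapping is both legitimate and economical; indeed the paper itself later exploits the fact that $\sm\,\d^2 g(\oz,\ov)$ is CPLQ (see the proof of Proposition~\ref{ouli}(b)). Your closing computation at $w=0$ via $\bigcap_i K_{C_i}(\oz,\ov_i)^*=\big(\bigcup_i K_{C_i}(\oz,\ov_i)\big)^*$ is the standard polar calculus. The one step you should make explicit is that $\dom\sub h=\dom h$: for a convex PLQ function the subderivative $\d h(w)$ is a proper convex piecewise linear function (Proposition~\ref{fop}(b) applied to $h$), and such functions always have nonempty subdifferential on their domain, so $\sub h(w)\ne\emptyset$ for every $w\in K_g(\oz,\ov)$.
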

\begin{proof} Part (a) follows immediately from the definition of the critical cone of $g$ at $\oz$ for $\ov$ together with Proposition~\ref{fop}(b).
Part (b) is taken from  \cite[Proposition~13.9]{rw}. The claimed proto-differentiability of $\sub g$ in (c) results from (b) and \cite[Theorem~13.40]{rw}.   The proto-derivative \eqref{gdpw}   comes from  \cite[Proposition~7.3]{mmsmor}. 
The given formula for $D(\sub g)(\oz,\ov)(0)$ was justified in \cite[Theorem~8.1]{mmsmor}. 
\end{proof}

 We proceed   by providing an extension of the    reduction lemma for CPLQ functions. Recall from \cite[Lemma~2E.4]{dr} that  for polyhedral convex sets this result can be stated as follows: If  ${ \Th}$ is  a polyhedral  convex set  in $\R^n$  and  $(\ox,\oy)\in \gph N_\Th$,
 then  there exists a neighborhood ${\cal O}$ of $(0,0)\in \R^m\times \R^m$
for which we have
\begin{equation}\label{rl10}
\big ( (\gph N_\Th) - (\ox,\oy)\big )\cap {\cal O}=\big (\gph N_{  K_\Th(\ox,\oy)}\big )\cap {\cal O}.
\end{equation}
The reduction lemma was appeared first in \cite[Proposition~4.4]{rob2} and has played an important role in sensitivity  analysis of optimization problems with polyhedral structures.
 It is important to notice that $N_{K_\Th(\ox,\oy)}$ appearing on the right-hand side of this equality is, indeed, the proto-derivative of $N_\Th$, namely 
 \begin{equation}\label{pdp}
 DN_\Th(\ox,\oy)=N_{K_\Th(\ox,\oy)};
 \end{equation} 
 see \cite[equation~(9.6)]{mmstams} or \cite[Example~4A.4]{dr} for a proof of this result. Using this observation, we show below that  a similar result  can be justified for CPLQ functions.
 \begin{Theorem}[reduction lemma for  CPLQ functions]\label{rlcp} Let  $g:\R^m\to \oR$  be a CPLQ function and $(\oz,\ov)\in \gph \sub g$. 
 Then there exists a neighborhood ${\cal O}$ of $(0,0)\in \R^m\times \R^m$
for which we have
\begin{equation}\label{rl2}
\big ( (\gph \sub g) - (\oz,\ov)\big )\cap {\cal O}=\big (\gph D(\sub g)(\oz,\ov)\big )\cap {\cal O}.
\end{equation}
\end{Theorem}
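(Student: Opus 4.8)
The plan is to reduce the CPLQ statement to the polyhedral reduction lemma \eqref{rl10} by passing to epigraphs. Recall that $\sub g$ is, essentially by definition, the mapping whose graph is obtained from the normal cone to $\epi g$ via the rule $v\in\sub g(z)\iff (v,-1)\in N_{\epi g}(z,g(z))$. Since $g$ is CPLQ, its epigraph $\epi g$ is a \emph{polyhedral} convex set in $\R^m\times\R$ (each piece $\{(z,\al): z\in C_i,\ \sm\la A_iz,z\ra+\la a_i,z\ra+\al_i\le\al\}$ is polyhedral because $C_i$ is polyhedral and the quadratic inequality, after the change of variables sending the graph of the quadratic to a hyperplane on $C_i$, is affine; more carefully, one uses that $\epi g$ is polyhedral by \cite[Proposition~10.21]{rw} or the standard characterization of CPLQ functions). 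Set $\oy:=g(\oz)$ and $\ow:=(\ov,-1)\in N_{\epi g}(\oz,\oy)$. Applying \eqref{rl10} to the polyhedral set $\epi g$ at the point $\big((\oz,\oy),\ow\big)\in\gph N_{\epi g}$ yields a neighborhood of the origin in $(\R^m\times\R)^2$ on which $\gph N_{\epi g}$, recentered at $\big((\oz,\oy),\ow\big)$, coincides with $\gph N_{K}$, where $K:=K_{\epi g}\big((\oz,\oy),\ow\big)$ is the critical cone of $\epi g$ at that point.

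Next I would translate both sides back through the epigraphical correspondence. On the left, the recentered graph of $N_{\epi g}$ near $\big((\oz,\oy),\ow\big)$, intersected with the slice $\{$last dual coordinate $=-1\}$, is exactly the recentered graph of $\sub g$ near $(\oz,\ov)$: for $(z,\al)$ near $(\oz,\oy)$ with $(v,-\beta)\in N_{\epi g}(z,\al)$ and $\beta$ near $1$, one has $\al=g(z)$ and $\beta=1$ (the outward-normal coordinate is positive on the graph of a finite CPLQ function near an interior-epigraph point of its domain, so after normalization $\beta=1$), whence $v\in\sub g(z)$. On the right, I need to identify the corresponding slice of $\gph N_K$ with $\gph D(\sub g)(\oz,\ov)$. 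Here I would invoke the known fact, analogous to \eqref{pdp}, that the proto-derivative of $\sub g$ is computed from the critical cone of the epigraph: combining Proposition~\ref{sop}(c) with the epigraphical description of the critical cone $K_{\epi g}\big((\oz,\oy),\ow\big)$ in terms of $K_g(\oz,\ov)$ and $\d^2 g(\oz,\ov)$ (a standard identity, e.g.\ via \cite[Theorem~13.40]{rw} or \cite[Proposition~7.3]{mmsmor}), shows that $v\in D(\sub g)(\oz,\ov)(w)\iff (v,-1)\in N_K(w,\la\ov,w\ra - \text{const})$ in the appropriate slice. Matching the two slices gives \eqref{rl2}; the neighborhood ${\cal O}$ is obtained by intersecting the neighborhood from \eqref{rl10} with the slab where the normalizing dual coordinate stays close to $1$.

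An alternative, more self-contained route avoids epigraphs entirely and argues piecewise. Write $\gph\sub g=\bigcup_{\text{pieces}}\gph N_{C_i}(\cdot) + (\text{affine shift by }A_i z+a_i)$ using \eqref{sub}, and similarly $\gph D(\sub g)(\oz,\ov)=\bigcup_{i\in{\mathfrak J}} \gph N_{K_{C_i}(\oz,\ov_i)}(\cdot)+(\text{shift by }A_iw)$ using \eqref{gdpw}. For each active $i\in I(\oz)$ one applies the polyhedral reduction lemma \eqref{rl10} to $N_{C_i}$ at $(\oz,\ov_i)$ to get a neighborhood ${\cal O}_i$ on which $\gph N_{C_i}$ recentered agrees with $\gph N_{K_{C_i}(\oz,\ov_i)}$; the affine terms $A_iz+a_i$ are smooth and linearize to $A_iw$, contributing an $o$-error that is absorbed by further shrinking. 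One then takes ${\cal O}=\bigcap_{i\in I(\oz)}{\cal O}_i$ and checks that for $(z,v)$ in this neighborhood the index set of pieces through $z$ active ``in the reduced sense'' matches ${\mathfrak J}(w)$ with $w=z-\oz$; this is where the local structure of CPLQ domains (Proposition~\ref{fop}(b)) is used, namely that near $\oz$ only indices in $I(\oz)$ are active and $T_{\dom g}(\oz)=\bigcup_{i\in I(\oz)}T_{C_i}(\oz)$.

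The main obstacle will be the bookkeeping in the second paragraph: pinning down the exact neighborhood on which the normalizing coordinate of the normal to $\epi g$ is forced to equal $1$ (equivalently, showing no ``horizontal'' normals $(v,0)\in N_{\epi g}(z,g(z))$ intrude near $(\oz,\oy,\ov,-1)$ when $\ov\in\sub g(\oz)$ already certifies $(\ov,-1)$ is a genuine epigraphical normal), and then verifying that the slice of the critical cone $K_{\epi g}\big((\oz,\oy),\ov,-1\big)$ is precisely $\gph D(\sub g)(\oz,\ov)$ rather than something larger. Both are routine in the polyhedral/CPLQ setting but require care because $\epi g$ is polyhedral only in the ``graphically polyhedral'' sense for CPLQ functions; I expect to cite \cite[Proposition~10.21]{rw} and \cite{mmsmor} to close these gaps cleanly, and to prefer the epigraphical proof since it makes \eqref{rl2} a one-line corollary of \eqref{rl10} once the correspondence is set up.
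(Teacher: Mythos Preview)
Your primary (epigraphical) approach has a fatal gap: for a genuinely quadratic CPLQ function, $\epi g$ is \emph{not} a polyhedral convex set, so the polyhedral reduction lemma \eqref{rl10} cannot be applied to it. Take $g(z)=\sm\|z\|^2$; its epigraph is a paraboloid. The sentence ``the quadratic inequality, after the change of variables sending the graph of the quadratic to a hyperplane on $C_i$, is affine'' has no content---no linear change of coordinates turns a nondegenerate quadratic sublevel set into a half-space---and \cite[Proposition~10.21]{rw} does not assert polyhedrality of $\epi g$ (it only gives continuity and the structure of $T_{\dom g}$). Your own hedge at the end (``polyhedral only in the `graphically polyhedral' sense'') concedes the point without repairing it. What \emph{is} true is that $\gph\sub g$ is a finite union of polyhedral convex sets (this is what the paper uses, via \cite[Theorem~11.14(b)]{rw}), but that is a statement about the subdifferential graph, not the epigraph, and it does not let you invoke \eqref{rl10} on $\epi g$.

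Your alternative piecewise route is much closer to the paper's actual argument, but you have the algebra of \eqref{sub} and \eqref{gdpw} backwards: both formulas are \emph{intersections} over the active index set, not unions, so $\gph\sub g$ is not ``$\bigcup_{\text{pieces}}\gph N_{C_i}+\text{shift}$.'' The paper fixes a neighborhood on which (i) the active index set satisfies $I(z)\subset I(\oz)$, (ii) the exact tangent identity $T_{C_i}(\oz)\cap\B_\ve(0)=(C_i-\oz)\cap\B_\ve(0)$ holds for every $i\in I(\oz)$, and (iii) the polyhedral reduction lemma \eqref{rl10} holds for each pair $(C_i,\ov_i)$ with $\ov_i=\ov-A_i\oz-a_i$. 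It then verifies the two inclusions in \eqref{rl2} directly from the defining intersections \eqref{sub} and \eqref{gdpw}, index by index: for `$\subset$' one shows that $i\in{\mathfrak J}(z-\oz)$ forces $i\in I(z)$ via (ii), and for `$\supset$' one shows that $i\in I(\oz+w)$ forces $i\in{\mathfrak J}(w)$ using Proposition~\ref{fop}(b) to extract $\la w,\ov_i\ra=0$. The affine shift $A_iz+a_i$ linearizes \emph{exactly} to $A_iw$ (there is no $o$-term to absorb). If you rewrite your alternative route with the correct intersection structure and this index bookkeeping, you will recover the paper's proof.
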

\begin{proof} Since  $(\oz,\ov)\in \gph \sub g$, we deduce from \eqref{sub} that 
$$
\ov_i=\ov -A_i \oz -a_i\in N_{ C_i}(\oz)\quad \mbox{for all}\; i\in I(\oz).
$$
We know from  \eqref{PWLQ} that for any $i=1,\ldots,s$, the set $C_i$ is a polyhedral convex set. By \eqref{rl10}, we find a neighborhood ${\cal O}_i$ of $(0,0)\in \R^m\times \R^m$
such that 
\begin{equation}\label{rl1}
\big ( (\gph N_{ C_i}) - (\oz,\ov_i)\big )\cap {\cal O}_i=\big (\gph N_{  K_{ C_i}(\oz,\ov_i)}\big )\cap {\cal O}_i\quad \mbox{for all}\; i=1,\ldots,s.
\end{equation}
Pick $\ve>0$ such that $I(z)\subset I(\oz)$ for all $z\in \B_\ve(\oz)$ and that 
\begin{equation}\label{exact}
T_{C_i}(\oz)\cap \B_\ve(0)=(C_i-\oz)\cap \B_\ve(0)\quad  \mbox{for all}\; i\in I(\oz).
\end{equation}
Indeed, the latter follows directly from \cite[Exercise~6.47]{rw} since $C_i$ are polyhedral convex sets. Shrinking $\ve$ if necessary, assume without loss of generality that ${\cal O}:=\B_{\ve/2\al}(0,0)\subset \cap_{i=1}^s {\cal O}_i$,
where $\al=\max_{i\in I(\oz)}\{1,\|A_i\| \}$.  To justify \eqref{rl2}, let $(z,v)\in \big ( \gph \sub g\big )\cap \big((\oz,\ov)+{\cal O}\big)$. We are going to show that $(z-\oz,v-\ov)\in \gph D(\sub g)(\oz,\ov)$. 
According to \eqref{gdpw}, this can be justified by showing that for any $i\in {\mathfrak J}(x-\ox)$ we have $v-\ov-A_i (z-\oz)\in N_{\ss K_{C_i}(\oz,\ov_i)}(z-\oz)$.
So pick $i\in {\mathfrak J}(z-\oz)$. By definition, this tells us that $z-\oz\in K_{C_i}(\oz,\ov_i)\subset T_{C_i}(\oz)$, which by \eqref{exact} yields $z\in C_i$, meaning that $i\in I(z)$. 
Using this,  $(z,v)\in  \gph \sub g$, and \eqref{sub} confirms  that $(z,v-A_iz-a_i)\in \gph N_{C_i}$. Observe that 
\begin{eqnarray*}
\|(z,v-A_iz-a_i)- (\oz,\ov_i)\|&=&\sqrt{\|z-\oz\|^2+\|v-\ov-A_i(z-\oz)\|^2}\\
&\le &\sqrt{3}\al \sqrt{\|z-\oz\|^2+\|v-\ov\|^2}<\ve.
\end{eqnarray*}
This, combined with \eqref{rl1}, indicates that $v-\ov-A_i (z-\oz)\in N_{\ss K_{C_i}(\oz,\ov_i)}(z-\oz)$ and so we arrive at the inclusion `$\subset$' in \eqref{rl2}.

Turning now to verify  the opposite inclusion `$\supset$' in \eqref{rl2}, pick $(w,u)\in \big (\gph D(\sub g)(\oz,\ov)\big )\cap {\cal O}$. 
We are going to show that $(\oz+w,\ov+u)\in \gph \sub g$. 
The latter inclusion via \eqref{sub} amounts to showing that for any $i\in I(\oz+w)$, we have 
$$u+\ov-A_i(\oz+w)-a_i\in N_{C_i}(\oz+w).$$
To prove this, pick $i\in I(\oz+w)$, meaning that $\oz+w\in C_i$. By the definition of ${\cal O}$ and \eqref{exact}, we obtain $w\in T_{C_i}(\oz)$.
Moreover, $(w,u)\in \gph D(\sub g)(\oz,\ov)$ and Proposition~\ref{sop}(c) result in $w\in \dom D(\sub g)(\oz,\ov)=K_g(\oz,\ov)$. This inclusion, $w\in T_{C_i}(\oz)$, and Proposition~\ref{fop}(b) bring us to 
$$
\la w,\ov\ra= \d g(\oz)(w)= \la A_i \oz+ a_i, w\ra,
$$
which in turn yields $\la w,\ov_i\ra=0$. So we get $w\in  T_{C_i}(\oz)\cap [\ov_i]^\perp=K_{C_i}(\oz,\ov)$. Since $I(\oz+w)\subset I(\oz)$ by the definition of ${\cal O}$, we arrive at $i\in {\mathfrak J}(w) $,
where the index set ${\mathfrak J}(w)$ is defined in Proposition~\ref{sop}(c). Thus, by the latter proposition, we obtain $(w, u-A_iw)\in\gph  N_{\ss K_{C_i}(\oz,\ov_i)}$. 
Since 
\begin{eqnarray*}
\| (w, u-A_iw)\|=\sqrt{\|w\|^2+\|u-A_iw\|^2}\le \sqrt{3}\al \sqrt{\|u\|^2+\|w\|^2}<\ve,
\end{eqnarray*}
we get $ (w, u-A_iw)\in \B_\ve(0,0)\subset {\cal O}_i$. Appealing now to \eqref{rl1} implies that 
$(w, u-A_iw)+(\oz,\ov_i)\in \gph N_{C_i}$. This confirms that $u+\ov-A_i(\oz+w)-a_i\in N_{C_i}(\oz+w)$ and thus justifies the inclusion `$\supset$' in \eqref{rl2}.
\end{proof}

We continue by showing that the proto-derivative of the subgradient mapping of a CPLQ function enjoys  the outer/upper Lipschitzian property.
\begin{Proposition}[outer Lipschitzian of proto-derivative]\label{ouli} Assume that $g:\R^m\to \oR$ is a CPLQ function 
and that $\oz\in \dom g$ and $\ov\in \sub g(\oz)$. Then the following conditions hold:
 \begin{itemize}[noitemsep,topsep=0pt]
 \item [{\rm (a)}] there are a neighborhood $U$ of $\oz$ and a constant $\ell\ge 0$ such that 
 $$
 \sub g(z)\subset \sub g(\oz)+\ell \|x-\ox\|\B\quad \mbox{for all}\;z\in U;
 $$
  \item [{\rm (b)}] for any $w\in K_g(\oz,\ov)$, there are a neighborhood $W$ of $w$ and a constant $\ell\ge 0$ such that 
  $$
D(\sub g)(\oz,\ov)(u)\subset D(\sub g)(\oz,\ov)(w)+\ell\|u -w\|\B  \quad \mbox{for all}\;u\in W.
  $$
 \end{itemize}
\end{Proposition}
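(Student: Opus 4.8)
The plan is to deduce both assertions from a single structural fact together with Robinson's theorem on polyhedral multifunctions: the subgradient mapping of a convex CPLQ function is a \emph{polyhedral multifunction}, i.e. its graph is a finite union of polyhedral convex sets, and every polyhedral multifunction is locally upper Lipschitz continuous. For part~(a): Proposition~\ref{fop}(a) gives $\sub g(z)=\bigcap_{i\in I(z)}\{v\mid v-A_iz-a_i\in N_{C_i}(z)\}$; since each $C_i$ is polyhedral, each $\gph N_{C_i}$ is a finite union of polyhedral convex sets, and feeding this into the formula — or, alternatively, using the Fenchel--Young description $\gph\sub g=\{(z,v)\mid g(z)+g^*(v)=\la z,v\ra\}$ together with the fact that $g^*$ is again convex CPLQ, see \cite[Theorem~11.14]{rw} — shows that $\gph\sub g$ is a finite union of polyhedral convex sets. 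Robinson's theorem \cite{rob1} then produces $\ell\ge0$ and a neighborhood $U$ of $\oz$ with $\sub g(z)\subset\sub g(\oz)+\ell\|z-\oz\|\B$ for all $z\in U$, which is exactly~(a). I expect the polyhedrality of $\gph\sub g$ to be the only point needing care here: a naive alternative proof using a Hoffman-type bound with only the constraints active at $z$ would break down on the lower-dimensional active-set pieces, and handling those uniformly is precisely the content of Robinson's theorem.

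For part~(b) the plan is to recognize $D(\sub g)(\oz,\ov)$ itself as the subgradient mapping of a convex CPLQ function and then appeal to~(a). Put $\tilde g:=\tfrac12\,\d^2g(\oz,\ov)$. By Proposition~\ref{sop}(b) and \eqref{cc2}, $\tilde g$ is CPLQ with $\dom\tilde g=K_g(\oz,\ov)=\bigcup_{i\in I(\oz)}K_{C_i}(\oz,\ov_i)$, whose pieces are the polyhedral cones $K_{C_i}(\oz,\ov_i)$ ($i\in I(\oz)$), on each of which $\tilde g(w)=\tfrac12\la A_iw,w\ra$ with vanishing linear and constant terms — the quadratic expressions agreeing on overlaps because $\d^2g(\oz,\ov)$ is single-valued. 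Moreover $\tilde g$ is convex; this can be seen from the conjugacy of second subderivatives of convex functions (\cite[Theorem~13.21]{rw}), which identifies $\tfrac12\,\d^2g(\oz,\ov)$ with the conjugate of $\tfrac12\,\d^2g^*(\ov,\oz)$. Now apply Proposition~\ref{fop}(a) to $\tilde g$: its active index set at a point $w\in\dom\tilde g$ equals $\mathfrak J(w)=\{i\in I(\oz)\mid w\in K_{C_i}(\oz,\ov_i)\}$, its matrices are the $A_i$, and its affine data vanish, so $\sub\tilde g(w)=\bigcap_{i\in\mathfrak J(w)}\{u\mid u-A_iw\in N_{K_{C_i}(\oz,\ov_i)}(w)\}$, which by \eqref{gdpw} is $D(\sub g)(\oz,\ov)(w)$. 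Hence $\sub\tilde g=D(\sub g)(\oz,\ov)$ as set-valued mappings.

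To conclude~(b), fix $w\in K_g(\oz,\ov)=\dom\tilde g$. By Proposition~\ref{sop}(c) we have $\sub\tilde g(w)=D(\sub g)(\oz,\ov)(w)\neq\emp$, so part~(a) applies to the convex CPLQ function $\tilde g$ at the point $w$ and yields a neighborhood $W$ of $w$ and $\ell\ge0$ with $\sub\tilde g(u)\subset\sub\tilde g(w)+\ell\|u-w\|\B$ for all $u\in W$; rewriting this via $\sub\tilde g=D(\sub g)(\oz,\ov)$ gives $D(\sub g)(\oz,\ov)(u)\subset D(\sub g)(\oz,\ov)(w)+\ell\|u-w\|\B$ for all $u\in W$, which is~(b). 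So the real work in~(b) is the identification $D(\sub g)(\oz,\ov)=\sub\tilde g$ and the convexity of $\tfrac12\,\d^2g(\oz,\ov)$; the polyhedral-multifunction argument from~(a) then delivers the estimate with no further effort.
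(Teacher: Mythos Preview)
Your proof is correct and follows essentially the same route as the paper: both parts rest on the fact that the subgradient mapping of a CPLQ function has a graph that is a finite union of polyhedral convex sets (cf.\ \cite[Theorem~11.14(b)]{rw}) and then invoke Robinson's outer Lipschitzian result \cite{rob1}. For part~(b) the paper obtains the identification $D(\sub g)(\oz,\ov)=\sub\big(\tfrac12\,\d^2 g(\oz,\ov)\big)$ in one stroke via \cite[Theorem~13.40]{rw}, whereas you verify it by hand by applying Proposition~\ref{fop}(a) to $\tilde g$ and matching with \eqref{gdpw}; this is a minor difference in bookkeeping, not in strategy.
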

\begin{proof} Since $g$ is CPLQ, it follows from the proof of \cite[Theorem~11.14(b)]{rw} that $\gph\partial g$ is a union of finitely many  polyhedral convex  sets. 
By assumptions, we have  $\oz\in \dom \sub g$.
These together with \cite[Proposition~1]{rob1} (see also \cite[Theorem~3D.1]{dr})
proves (a). 

To verify (b), observe first by Proposition~\ref{sop}(b) that $g$ is twice epi-differentiable at $\oz$ for $\ov$. 
Appealing to \cite[Theorem~13.40]{rw} tells us that 
\begin{equation}\label{gdpl}
D(\sub g)(\oz,\ov)(w)=\sub\big(\sm  \d^2 g(\oz , \ov )\big) (w).
\end{equation}
According to Proposition~\ref{sop}(b), the function $\sm  \d^2 g(\oz , \ov )$ is CPLQ. Employing again \cite[Theorem~11.14(b)]{rw} shows that $\gph \sub\big(\sm  \d^2 g(\oz , \ov )\big) $
 is a union of finitely many  polyhedral convex  sets. Since $w\in K_g(\oz,\ov)=\dom D(\sub g)(\oz,\ov)$, Robinson's observation in  \cite[Proposition~1]{rob1}, combined with \eqref{gdpl},  justifies (b). 
 \end{proof}
 
 Recall that the Lagrangian of \eqref{comp}
is given by 
\begin{equation}\label{lag}
L(x,\lm)= \ph(x)+\la \Phi(x),\lm\ra, \quad \mbox{with}\;\;(x,\lm)\in \R^n\times \R^m.
\end{equation}
Note that a slightly  different Lagrangian has been utilized--see for instance \cite{be}--for the composite problem \eqref{comp} by subtracting  the Fenchel conjugate function $g^*(\lm)$ from the Lagrangian above.
We, however, do not consider such a term in the Lagrangian for \eqref{comp} since it does not have any impacts on second-order analysis conducted in this paper. 
The   Karush-Kuhn-Tucker (KKT)   system associated with the composite \eqref{comp} is given by 
\begin{equation}\label{vs} 
0\in \nabla_x L(x,\lm)+N_\Th(x),\;\;\lm\in \sub g(\Phi(x)),
\end{equation}
where $\nabla_x L(x,\lm)= \nabla \ph(x)+\nabla \Phi(x)^*\lm$ with  $\nabla \Phi(\ox)^*$ standing for the transpose of the  Jacobian matrix $\nabla \Phi(\ox)$. 
Given a point $\ox\in\R^n$, we define the set of {  Lagrange multipliers} of the KKT system \eqref{vs} associated with $\ox$ by
\begin{equation}\label{laset}
\Lambda(\ox):=\big\{\lm\in\R^m\;\big|\; 0\in \nabla_x L(\ox,\lm)+N_\Th(\ox),\;\lm\in \sub g(\Phi(\ox))\big\}.
\end{equation}
If  $(\ox,\olm)$ is a solution to the KKT  system \eqref{vs}, then we get  $\olm\in\Lambda(\ox)$.
If  $\lm\in\Lambda(\ox)$,   we can conclude that 
\begin{eqnarray}\label{stat}
0\in \nabla \ph(\ox)+ \nabla\Phi(\ox)^*\lm+ N_\Th(\ox)&\subset& \nabla \ph(\ox)+ \nabla\Phi(\ox)^*\sub g\big(\Phi(\ox)\big)+ N_\Th(\ox)\nonumber \\
&\subset&  \nabla \ph(\ox)+  \Hat\sub (g\circ \Phi)(\ox)+ N_\Th(\ox)\nonumber \\
&\subset&  \Hat \sub\big(\ph+g\circ \Phi +\dd_\Th\big)(\ox) \nonumber\\
&\subset&    \sub\big(\ph+g\circ \Phi +\dd_\Th\big)(\ox),
\end{eqnarray}
where both second and third inclusions come from  \cite[Exercise~10.7]{rw} and \cite[Corollary~10.9]{rw}, respectively. 

We end this section by recalling second-order optimality conditions for the composite problem \eqref{comp}, which are taken from 
 \cite[Exercise~13.26]{rw}. We should add here that the latter result was written in \cite{rw} for a subclass of \eqref{comp} for which the CPLQ function $g$  in \eqref{comp} has the  representation \eqref{theta}.
 It is rather easy to see that this result  holds for any CPLQ functions. Below we provide a proof, which is in fact an elaboration of  the proof of \cite[Exercise~13.26]{rw}.  
 
 \begin{Proposition}[second-order optimality conditions]\label{sooc} Assume that $\ox\in \Th$,  $\Phi(\ox)\in \dom g$, and $\olm\in \Lm(\ox)$, where $\Th$, $\Phi $, and $g$ are taken from  \eqref{comp},
 and that the basic constraint qualification 
 \begin{equation}\label{rcq}
 -\nabla \Phi(\ox)^* u\in N_\Th(\ox), \;\; u\in N_{\ss\dom g}(\Phi(\ox))\implies u=0
 \end{equation}
 holds. Then the following second-order optimality conditions hold:
  \begin{itemize}[noitemsep,topsep=0pt]
 \item [{\rm (a)}] if $\ox$ is a local minimizer of \eqref{comp},    then the second-order necessary condition
\begin{equation*}
\max_{\lm\in\Lambda(\ox)}\big\{\langle\nabla_{xx}^2L(\bar x,\lm)w,w\rangle+\d^2 g(\Phi(\bar x),\lm ) (\nabla \Phi(\ox)w )\big\}  \ge 0
\end{equation*}
is satisfied for all vectors $w\in \D$, where the convex cone $\D$ is defined by 
 \begin{equation}\label{coned}
 \D:=K_\Th\big(\ox,-\nabla_x L(\ox,\olm)\big)\cap \big\{w\in \R^n\big|\; \nabla \Phi(\ox)w\in K_g(\Phi(\ox),\olm)\big\}.
 \end{equation}
 \item [{\rm (b)}]  the  second-order condition
\begin{equation*}\label{sscc}
\max_{\lm\in\Lambda(\ox)}\big\{\langle\nabla_{xx}^2L(\bar x,\lm)w,w\rangle+\d^2 g (\Phi(\bar x),\lm ) (\nabla \Phi(\ox)w )\big\}>0 \quad\mbox{for all}\;\; w\in \D\setminus\{0\}
\end{equation*}
amounts to the existence of positive constants $\ell$ and $\ve$ such that the quadratic growth condition
\begin{equation*}\label{quadg2}
\ph(x)+g (\Phi(x))\ge \ph(\ox)+g( \Phi(\ox))+\frac{\ell}{2}\|x-\ox\|^2\;\mbox{ for all }\;x\in\B_{\ve}(\ox)\cap \Th
\end{equation*}
holds.
 \end{itemize}
 \end{Proposition}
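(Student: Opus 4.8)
Put $f:=\ph+g\circ\Phi+\dd_\Th$, the essential objective of \eqref{comp}; by \eqref{stat} the hypothesis $\olm\in\Lambda(\ox)$ gives $0\in\sub f(\ox)$, and $\ox\in\Th$, $\Phi(\ox)\in\dom g$ make $f(\ox)$ finite. Recall the general facts (see \cite[Theorem~13.24]{rw}): if $\ox$ is a local minimizer of $f$ then $\d^2 f(\ox,0)(w)\ge 0$ for all $w\in\R^n$, and the condition $\d^2 f(\ox,0)(w)>0$ for all $w\ne 0$ holds if and only if $f(x)\ge f(\ox)+\frac{\ell}{2}\|x-\ox\|^2$ on some $\B_\ve(\ox)$ for some $\ell,\ve>0$; the latter is equivalent to the quadratic growth condition in (b), since $f=+\infty$ outside $\Th$ and $f(\ox)=\ph(\ox)+g(\Phi(\ox))$. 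Hence both assertions reduce to establishing
$$
\d^2 f(\ox,0)(w)=\begin{cases}\disp\max_{\lm\in\Lambda(\ox)}\big\{\langle\nabla_{xx}^2L(\ox,\lm)w,w\rangle+\d^2 g(\Phi(\ox),\lm)(\nabla\Phi(\ox)w)\big\}&\text{if }w\in\D,\\[1ex]\infty&\text{if }w\notin\D.\end{cases}
$$
Indeed, (a) is then this inequality restricted to $\D$ (vacuous off $\D$), and in (b) the requirement ``$\d^2 f(\ox,0)(w)>0$ for all $w\ne 0$'' reads exactly as ``$\max_{\lm\in\Lambda(\ox)}\{\cdots\}>0$ for all $w\in\D\setminus\{0\}$'' (automatic off $\D$, not required at $0$).

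To prove this identity I would write $f=\ph+\psi\circ G$ with $G(x):=(x,\Phi(x))$ of class $C^2$ and $\psi(z,u):=\dd_\Th(z)+g(u)$, a convex CPLQ function on $\R^n\times\R^m$, and combine three facts. First, the smooth-perturbation rule for second subderivatives gives $\d^2 f(\ox,0)(w)=\langle\nabla^2\ph(\ox)w,w\rangle+\d^2(\psi\circ G)(\ox,-\nabla\ph(\ox))(w)$. Second, $\psi\circ G$ is fully amenable and the second-order chain rule applies to it: its qualification condition demands that the only $\eta\in N_{\dom\psi}(G(\ox))=N_\Th(\ox)\times N_{\dom g}(\Phi(\ox))$ with $\nabla G(\ox)^*\eta=0$ be $\eta=0$, and since $\nabla G(\ox)^*(\eta_1,\eta_2)=\eta_1+\nabla\Phi(\ox)^*\eta_2$ this is precisely \eqref{rcq}; the rule then yields $\psi\circ G$ twice epi-differentiable with
$$
\d^2(\psi\circ G)(\ox,-\nabla\ph(\ox))(w)=\max\big\{\langle\eta,\nabla^2 G(\ox)(w,w)\rangle+\d^2\psi(G(\ox),\eta)(\nabla G(\ox)w)\big\},
$$
the maximum over $\eta\in\sub\psi(G(\ox))=N_\Th(\ox)\times\sub g(\Phi(\ox))$ with $\nabla G(\ox)^*\eta=-\nabla\ph(\ox)$, the convexity of $\psi$ being responsible for the maximum in place of an infimum. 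Third, $\sub\psi$ and $\d^2\psi$ split over the product: writing $\eta=(p,\lm)$, the constraint forces $p=-\nabla_xL(\ox,\lm)\in N_\Th(\ox)$ together with $\lm\in\sub g(\Phi(\ox))$, i.e. $\lm\in\Lambda(\ox)$; moreover $\nabla^2 G(\ox)(w,w)=(0,\nabla^2\Phi(\ox)(w,w))$, $\d^2\psi(G(\ox),(p,\lm))(\nabla G(\ox)w)=\d^2\dd_\Th(\ox,p)(w)+\d^2 g(\Phi(\ox),\lm)(\nabla\Phi(\ox)w)$, and $\d^2\dd_\Th(\ox,p)=\dd_{K_\Th(\ox,p)}$ because $\dd_\Th$ is CPLQ with the single linear-quadratic piece $\Th$ (Proposition~\ref{sop}(b) with $A_1=0$). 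Since $\langle\nabla^2\ph(\ox)w,w\rangle+\langle\lm,\nabla^2\Phi(\ox)(w,w)\rangle=\langle\nabla_{xx}^2L(\ox,\lm)w,w\rangle$ by \eqref{lag}, assembling these produces the asserted maximum over $\Lambda(\ox)$, each summand being $+\infty$ unless $w\in K_\Th(\ox,-\nabla_xL(\ox,\lm))$ and $\nabla\Phi(\ox)w\in K_g(\Phi(\ox),\lm)$.

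It remains to identify the common finiteness region with $\D$. Fixing $\lm\in\Lambda(\ox)$ and using $-\nabla_xL(\ox,\lm)\in N_\Th(\ox)$, the definition of the critical cone $K_g$, and the subgradient inequality $\d g(\Phi(\ox))(z)\ge\langle\lm,z\rangle$, one checks that ``$w\in K_\Th(\ox,-\nabla_xL(\ox,\lm))$ and $\nabla\Phi(\ox)w\in K_g(\Phi(\ox),\lm)$'' holds iff $w\in T_\Th(\ox)$ and $\langle\nabla\ph(\ox),w\rangle+\d g(\Phi(\ox))(\nabla\Phi(\ox)w)=0$. This last description does not involve $\lm$, equals $K_f(\ox,0)$, and coincides with $\D$ in \eqref{coned} (take $\lm=\olm$); in particular $\D$ is independent of the multiplier used to define it. Thus $\dom\d^2 f(\ox,0)=\D$, the displayed identity follows, and with it (a) and (b). The main obstacle is the second-order chain rule for $\psi\circ G$: it is responsible for the twice epi-differentiability and for the fact that the second subderivative is the stated \emph{maximum} over $\Lambda(\ox)$, and the verification that its qualification condition is exactly \eqref{rcq} is where hypothesis \eqref{rcq} is consumed.
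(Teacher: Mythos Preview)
Your proposal is correct and follows essentially the same route as the paper: both reduce (a) and (b) to computing $\d^2(\ph+g\circ\Phi+\dd_\Th)(\ox,0)$ via the Poliquin--Rockafellar second-order chain rule for fully amenable compositions (the paper invokes \cite[Theorem~3.4]{pr92} directly, you apply it through the product packaging $G(x)=(x,\Phi(x))$, $\psi=\dd_\Th\oplus g$, which is precisely the trick of Remark~\ref{remeq}), check that the required qualification is \eqref{rcq}, show the finiteness domain $\D$ is multiplier-independent, and finish with \cite[Theorem~13.24]{rw}. The only cosmetic difference is ordering: the paper establishes $\D=\D_\lm$ first and then derives the chain-rule formula, whereas you derive the formula and extract the $\lm$-independence of $\D$ at the end.
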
 
  \begin{proof} We begin the proof by showing that for any $\lm\in \Lm(\ox)$, we have $\D=\D_{\lm}$, where $\D_\lm$ is defined by replacing $\olm$ with $\lm$ in the definition of the convex cone $\D$ in \eqref{coned}. 
  To justify it, let $w\in \D$ and so  conclude  that $\nabla \Phi(\ox)w\in K_g(\Phi(\ox),\olm)$, $w\in T_\Th(\ox)$, and $\la w,\nabla_x L(\ox,\olm)\ra=0$,
 which in turn yield   
\begin{equation}\label{bv}
 \d g (\Phi(\ox))(\nabla \Phi(\ox)w)=\la \olm,\nabla\Phi(\ox)w\ra=\la \nabla\Phi(\ox)^*\olm,w\ra= \la -\nabla\ph(\ox),w\ra.
\end{equation}
 Since $\lm\in \Lm(\ox)$, we get $-\nabla_x L(\ox,\lm)\in N_\Th(\ox)$. This together with $w\in T_\Th(\ox)$ implies that $\la w,\nabla_x L(\ox,\lm) \ra\ge 0$, and so we obtain 
$$
 \la\lm, \nabla\Phi(\ox)w\ra \ge  \la -\nabla\ph(\ox),w\ra.
$$
 Combining these results in $\d g (\Phi(\ox))(\nabla \Phi(\ox)w)\le \la \lm, \nabla\Phi(\ox)w\ra$. Since the opposite inequality always holds due to $\lm\in \sub g(\Phi(\ox))$ (cf. \cite[Exercise~8.4]{rw}),
 we arrive at 
\begin{equation}\label{bv2}
\d g (\Phi(\ox))(\nabla \Phi(\ox)w)=\la \lm,\nabla\Phi(\ox)w\ra= \la \nabla\Phi(\ox)^*\lm,w\ra,
\end{equation}
 which yields $\nabla\Phi(\ox)w\in K_g(\Phi(\ox),\lm)$. Moreover, by \eqref{bv}-\eqref{bv2}, we obtain $\la \nabla\Phi(\ox)^*\lm,w\ra=\la \nabla\Phi(\ox)^*\olm,w\ra$. The latter equality and $\la w,\nabla_x L(\ox,\olm)\ra=0$ results in $\la w,\nabla_x L(\ox,\lm)\ra=0$,
 meaning  that $w\in K_\Th \big(\ox,\nabla_x L(\ox,\lm)\big )$. This shows that $w\in \D_{\lm}$. The opposite inclusion can be justified similarly. 
 
Set $f:=\ph+g\circ \Phi $. It follows from $\olm\in \Lm(\ox)$ and \eqref{stat} that $0\in \sub\big(f+\dd_\Th\big)(\ox)$ and $-\nabla_x L(\ox,\olm)\in N_\Th(\ox)$. 
 By \cite[equation~(3.10)]{mmstams} (see also \cite[Example~13.17]{rw}), we have 
\begin{equation}\label{ssp}
\d^2 \dd_\Th(\ox,-\nabla_x L(\ox,\olm))=\dd_{K_\Th(\ox,-\nabla_x L(\ox,\olm))}.
\end{equation}
Using this and \cite[Theorem~3.4]{pr92}, we conclude for every $w\in \R^n$ that 
 \begin{equation}\label{ssca2}
 \d^2 \big(f+\dd_\Th\big) (\ox,0)(w)=\max_{\lm\in \Lm(\ox)}\Big\{\dd_{K_\Th (\ox,-\nabla_x L(\ox,\lm) )}(w)+ \d^2 g(\Phi(\bar x),\lm ) (\nabla \Phi(\ox)w )+\langle\nabla_{xx}^2L(\bar x,\lm)w,w\rangle\Big\}.
 \end{equation}
 For any $w\in \R^n$,  we claim   that 
\begin{equation}\label{ssca}
  \d^2 \big(f+\dd_\Th\big) (\ox,0)(w)=\max_{\lm\in \Lm(\ox)}\Big\{ \d^2 g(\Phi(\bar x),\lm ) (\nabla \Phi(\ox)w )+\langle\nabla_{xx}^2L(\bar x,\lm)w,w\rangle\Big\}+\dd_{K_\Th (\ox,-\nabla_x L(\ox,\olm) )}(w).
\end{equation}
 Indeed, if $w\notin \D$, both sides of \eqref{ssca} equal $\infty$ due to   \eqref{pwfor}, \eqref{ssp}, and \eqref{ssca2}. If $w\in \D$, one can see that 
 the right-hand sides in \eqref{ssca2} and \eqref{ssca} coincide since $w\in \D_\lm$ for every  $\lm\in \Lm(\ox)$. By \eqref{ssca}, both claims in (a) and (b) 
 fall immediately  out of \cite[Theorem~13.24]{rw}. 
 \end{proof}
 
Note that while it may seem that  the definition of the convex cone $\D$ from \eqref{coned}  depends on $\olm$, the above proof reveals that 
it  will not change if we replace $\olm$ with any other Lagrange multiplier associated with $\ox$.

 \begin{Remark}[equivalent form of the composite problem]\label{remeq}{\rm Note that the composite optimization problem \eqref{comp} 
 can be equivalently expressed as 
 \begin{equation}\label{comp3}
\mini \ph(x)+\psi \big(x, \Phi(x)\big)\quad \mbox{subject to}\;\; x\in \R^n,
\end{equation} 
where $\psi:\R^n\times \R^m\to \oR$ is defined by $\psi(x,y)=\dd_\Th(x)+g(y)$  and 
where $\ph$, $g$, $\Phi $, and $\Th$ are taken from \eqref{comp}. According to \cite[Exercise~10.22(a)]{rw}, $\psi$ is a CPLQ function. 
So one can assume without loss of generality that $\Th=\R^n$ in \eqref{comp}.  The downside of this reduction is that one should write the final results in terms of the initial data and 
this requires a sum rule for different second-order constructions,  utilized in this paper. While this is not hard to achieve, it requires some effort. Since such a set $\Th$ appears 
in important applications of \eqref{comp} such as extended linear-quadratic programming problems (see Example~\ref{elqp}), we will proceed with \eqref{comp} in this paper.
}
 \end{Remark}

\section{ Characterizations of Noncriticality of Lagrange Multipliers}\sce  \label{sect03} 
In this section, we aim to present characterizations of noncritical multipliers of the KKT  system associated with the composite optimization problem \eqref{comp}.
To this end, we begin by  introducing  the concepts of critical and noncritical multipliers  for the KKT system \eqref{vs}. 

\begin{Definition}[critical and noncritical multipliers]\label{dcn}
Let $(\ox,\olm)$ be a solution to the KKT system \eqref{vs}. Then the multiplier $\olm\in\Lambda(\ox)$ is said to be {  critical} for \eqref{vs} if there is a nonzero vector $w\in\R^n$  satisfying the inclusion
\begin{equation}\label{crc}
0\in\nabla^2_{xx}L (\ox,\olm)w+\nabla \Phi(\ox)^*D(\sub g) (\Phi(\ox),\olm ) (\nabla \Phi(\ox)w ) +DN_\Th(\ox, -\nabla_x L(\ox,\olm))(w).
\end{equation}
The multiplier $\olm\in\Lambda(\ox)$ is {  noncritical} for \eqref{vs} if \eqref{crc} admits only the trivial solution $w=0$. 
\end{Definition}

If the polyhedral convex set $\Th=\R^n$, then Definition~\ref{dcn} clearly boils down to  \cite[Definition~3.1]{ms17}. 
The concepts of critical and noncritical multipliers were introduced  by
Izmailov in \cite{iz05} for the KKT system \eqref{vs} with $g=\dd_{\{0\}^m}$ and $\Th=\R^n$, which encompasses    KKT systems of 
classical nonlinear programming problems  with equality constraints. In this case, one can see via \eqref{pdp} that    \eqref{crc} simplifies as 
$$
\nabla^2_{xx} L (\ox,\olm)w\in \rge\nabla \Phi(\ox)^*,\;\; \nabla \Phi(\ox)w=0,
$$
where `$\reg$' stands for the range of a linear mapping.  Critical and noncritical Lagrange multipliers play a major role in the local convergence 
analysis of Newtonian methods including  the SQP methods. We refer our readers to \cite[Chapter~7]{is14} for detailed discussions on this subject.

We begin our analysis of noncritical multipliers of the KKT system \eqref{vs} by revealing an interesting connection between the latter concept  
and stationary points of a second-order approximation of the composite problem \eqref{comp}.

\begin{Proposition}[noncriticality  via second-order approximation]\label{soacr}
Assume that $(\ox,\olm)$ is a solution to the variational system \eqref{vs}. Then $\olm$ is a noncritical multiplier for \eqref{vs} if and only if $w=0$ is 
the unique stationary point of the  problem 
\begin{equation}\label{axic}
\mini \la \nabla^2_{xx}L (\ox,\olm)w,w\ra+\d^2g(\Phi(\ox),\olm)(\nabla \Phi(\ox)w) \quad\mbox{subject to}\;\; w\in K_\Th(\ox,-\nabla_x L(\ox,\olm)).
\end{equation}
\end{Proposition}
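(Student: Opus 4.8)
The statement asserts that noncriticality of $\olm$ is equivalent to $w=0$ being the unique stationary point of the quadratic program \eqref{axic} over the polyhedral cone $K:=K_\Th(\ox,-\nabla_x L(\ox,\olm))$. The natural strategy is to write down the first-order stationarity (KKT) condition for \eqref{axic} and show it coincides exactly with the critical-multiplier inclusion \eqref{crc}. First I would identify the objective of \eqref{axic}: set $q(w):=\la\nabla^2_{xx}L(\ox,\olm)w,w\ra+\d^2 g(\Phi(\ox),\olm)(\nabla\Phi(\ox)w)$. By Proposition~\ref{sop}(b), $\d^2 g(\Phi(\ox),\olm)$ is a CPLQ function, hence $q$ is CPLQ (a quadratic form plus a CPLQ composed with a linear map), so $q+\dd_K$ is CPLQ and the notion of a stationary point is unambiguous: $w$ is stationary for \eqref{axic} iff $0\in\partial(q+\dd_K)(w)$.

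Next I would compute this subdifferential. Using the sum rule and chain rule for subdifferentials of CPLQ functions (polyhedrality makes the qualification conditions automatic, cf. \cite[Exercise~10.7, Corollary~10.9]{rw}), one has
\[
0\in \nabla^2_{xx}L(\ox,\olm)w+\nabla\Phi(\ox)^*\,\partial\big(\tfrac12\d^2 g(\Phi(\ox),\olm)\big)(\nabla\Phi(\ox)w)+N_K(w).
\]
Here the factor $\tfrac12$ appears because $\nabla(\tfrac12\la Aw,w\ra)=Aw$ while the objective uses $\la Aw,w\ra$; I would normalize the objective consistently (or absorb the constant — it does not affect the set of stationary points since scaling the objective by a positive constant leaves $\partial(q+\dd_K)$'s zero set unchanged only after also scaling $N_K$ appropriately, so care is needed — the cleanest route is to note $\partial(\tfrac12 q+\dd_K)$ and $\partial(q+\dd_K)$ have the same zeros because $N_K$ is a cone). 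Then I invoke the two key identifications from the excerpt: by Proposition~\ref{sop}(b) together with \cite[Theorem~13.40]{rw} (the relation \eqref{gdpl} in the proof of Proposition~\ref{ouli}), $\partial\big(\tfrac12\d^2 g(\Phi(\ox),\olm)\big)=D(\sub g)(\Phi(\ox),\olm)$; and by \eqref{pdp}, $N_K=N_{K_\Th(\ox,-\nabla_x L(\ox,\olm))}=DN_\Th(\ox,-\nabla_x L(\ox,\olm))$. Substituting these turns the stationarity inclusion verbatim into \eqref{crc}.

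With that identification in hand the proposition is immediate: $\olm$ is noncritical iff \eqref{crc} has only $w=0$ as a solution (Definition~\ref{dcn}), which by the computation above is exactly the statement that \eqref{axic} has $w=0$ as its unique stationary point. I expect the main obstacle to be purely bookkeeping: getting the factor of $\tfrac12$ and the domain conditions right when differentiating $q$, i.e., verifying carefully that $\partial q(w)=2\,\partial(\tfrac12 q)(w)$ combines correctly with $N_K$ being a cone so that the zero sets of $0\in\partial(q+\dd_K)(\cdot)$ and the normalized inclusion agree, and confirming that the CPLQ sum/chain rules apply with no extra qualification hypotheses (which they do, by polyhedrality of $\dom$ of each piece). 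No genuinely hard analysis is involved beyond correctly assembling Propositions~\ref{fop}, \ref{sop} and the identities \eqref{pdp}, \eqref{gdpl}.
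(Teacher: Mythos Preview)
Your proposal is correct and follows essentially the same route as the paper's proof: both compute the stationarity condition for \eqref{axic} by applying the subdifferential sum and chain rules for CPLQ functions, then identify the two key pieces via the relations $\partial(\tfrac12\d^2 g(\Phi(\ox),\olm))=D(\sub g)(\Phi(\ox),\olm)$ (equation~\eqref{gdpl}) and $N_{K_\Th(\ox,-\nabla_x L(\ox,\olm))}=DN_\Th(\ox,-\nabla_x L(\ox,\olm))$ (equation~\eqref{pdp}), arriving exactly at \eqref{crc}. The paper handles the $\tfrac12$ bookkeeping by working directly with $\tfrac12$ of each summand before invoking the CPLQ sum rule from \cite[Exercise~10.22(a)]{rw}, and cites \cite[Corollary~3.8]{mmsmor} for the chain rule rather than \cite[Exercise~10.7]{rw}, but these are cosmetic differences---your observation that $N_K$ is a cone so positive rescaling does not affect the zero set is exactly the right way to reconcile the factor.
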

\begin{proof} To prove the claimed equivalence, observe that 
\begin{eqnarray*}
\nabla \Phi(\ox)^*D(\sub g) (\Phi(\ox),\olm ) (\nabla \Phi(\ox)w )&=&\nabla \Phi(\ox)^*\sub\big(\sm  \d^2 g(\Phi(\ox) , \olm )\big) (\nabla \Phi(\ox)w)\\
&=&\sm \sub_w\big( \d^2 g(\Phi(\ox) , \olm )(\nabla \Phi(\ox)\cdot)\big) (w),
\end{eqnarray*}
where the first equality comes from \eqref{gdpl} and the second one results from \cite[Corollary~3.8]{mmsmor}.   Since $\Th$ is a polyhedral convex set, the indicator function $\dd_\Th$ is CPLQ. Employing again \eqref{gdpl} tells us that 
$$
DN_\Th(\ox, -\nabla_x L(\ox,\olm))(w)=\sub\big(\sm  \d^2 \dd_\Th(\ox,-\nabla_x L(\ox,\olm))\big) (w)=\sm \sub\big(\dd_{K_\Th(\ox,-\nabla_x L(\ox,\olm))}\big)(w),
$$
where the last equality comes from \eqref{ssp}. 
It follows from Proposition~\ref{sop}(b) that $ \d^2 g(\Phi(\ox) , \olm )$ is CPLQ, which together with    \cite[Exercise~10.22(b)]{rw} shows that the function $w\mapsto \d^2 g(\Phi(\ox) , \olm )(\nabla \Phi(\ox)w)$ is CPLQ. 
So by  \cite[Exercise~10.22(a)]{rw} and the fact that $K_\Th(\ox,-\nabla_x L(\ox,\olm))$ is a polyhedral convex set , we obtain    the subdifferential sum rule 
\begin{eqnarray*}
 &&\sub_w \Big (\d^2 g(\Phi(\ox) , \olm )(\nabla \Phi(\ox)\cdot)+\dd_{K_\Th(\ox,-\nabla_x L(\ox,\olm))} \Big)(w)\\
 &=&\sub_w\big( \d^2 g(\Phi(\ox) , \olm )(\nabla \Phi(\ox)\cdot)\big) (w)+\sub\big(\dd_{K_\Th(\ox,-\nabla_x L(\ox,\olm))}\big)(w).
\end{eqnarray*}
Combining these confirms that \eqref{crc} amounts to the inclusion 
$$
0\in  \sub_w \Big(  \la \nabla^2_{xx}L (\ox,\olm)\cdot,\cdot\ra+ \d^2 g(\Phi(\ox), \olm )(\nabla \Phi(\ox)\cdot)+\dd_{K_\Th(\ox,-\nabla_x L(\ox,\olm))} \Big)(w).
$$
This clearly justifies   the claimed equivalence for the noncriticality of the Lagrange multiplier $\olm$ and so completes the proof.
\end{proof}

We continue our second-order analysis of the noncriticality of multipliers associated with \eqref{vs} by establishing another   equivalent description  of this notion. To this end, define the set-valued mapping $G:\R^n\times \R^m\tto \R^n\times \R^m$
by
 \begin{equation}\label{GKKT}
G(x,\lm):=\begin{bmatrix}
  \nabla_x L(x,\lm)\\
-\Phi(x)
\end{bmatrix}
+\begin{bmatrix}
N_\Th(x)\\
(\sub g)^{-1}(\lm)
\end{bmatrix}.
\end{equation}
It is easy to see that $(\ox,\olm)$ is a solution to the KKT system \eqref{vs} if and only if $(0,0)\in G(\ox,\olm)$.
\begin{Proposition}[proto-differentiability of KKT mappings]\label{pro1} Assume that $(\ox,\olm)$ is a solution to the KKT system \eqref{vs}.
Then the set-valued mapping $G$ from \eqref{GKKT} is proto-differentiable at $(\ox,\olm)$ for $(0,0)\in \R^n\times \R^m$ and for any $(w,u)\in \R^n\times\R^m$ its proto-derivative is calculated by 
\begin{equation}\label{pdg}
DG\big((\ox,\olm),(0,0)\big)(w,u)=\begin{bmatrix}
 \nabla^2_{xx} L(x,\lm)w+ \nabla \Phi(\ox)^*u\\
-\nabla \Phi(\ox)w
\end{bmatrix}
+
\begin{bmatrix}
DN_\Th(\ox, - \nabla_x L(\ox,\olm))(w)\\
D(\sub g)^{-1}(\olm, \Phi(\ox))(u)
\end{bmatrix},
\end{equation}
where $D(\sub g)^{-1}(\olm, \Phi(\ox))$ stands for the proto-derivative of $(\sub g)^{-1}$ at $\olm$ for  $\Phi(\ox)$.
\end{Proposition}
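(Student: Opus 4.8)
The plan is to write $G$ as the sum of a single-valued $C^1$ mapping and a purely set-valued mapping with a product structure, and then to establish proto-differentiability in two stages. Concretely, set
\[
F_0(x,\lm):=\big(\nabla_x L(x,\lm),\,-\Phi(x)\big),\qquad F_1(x,\lm):=N_\Th(x)\times(\sub g)^{-1}(\lm),
\]
so that $G=F_0+F_1$, with $F_0$ of class $C^1$ because $\ph$ and $\Phi$ are twice continuously differentiable. Since $(\ox,\olm)$ solves \eqref{vs}, we have $\bar v:=-\nabla_x L(\ox,\olm)\in N_\Th(\ox)$ and $\olm\in\sub g(\Phi(\ox))$, i.e. $\Phi(\ox)\in(\sub g)^{-1}(\olm)$; hence $(\bar v,\Phi(\ox))\in F_1(\ox,\olm)$ and $F_0(\ox,\olm)+(\bar v,\Phi(\ox))=(0,0)$, so it is the point $(\bar v,\Phi(\ox))$ of $\gph F_1$ that will be carried to $(0,0)$.

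First I would record proto-differentiability of the two pieces of $F_1$. Because $\Th$ is polyhedral convex, $\dd_\Th$ is CPLQ, so Proposition~\ref{sop}(c) applied to $\dd_\Th$ in place of $g$, at $\ox$ for $\bar v$, gives that $N_\Th=\sub\dd_\Th$ is proto-differentiable at $\ox$ for $\bar v=-\nabla_x L(\ox,\olm)$ with proto-derivative $DN_\Th(\ox,-\nabla_x L(\ox,\olm))$. Likewise, Proposition~\ref{sop}(c) applied to the CPLQ function $g$ at $\Phi(\ox)$ for $\olm$ shows that $\sub g$ is proto-differentiable at $\Phi(\ox)$ for $\olm$; since $\gph(\sub g)^{-1}$ is the image of $\gph\sub g$ under the linear coordinate swap $(z,\lm)\mapsto(\lm,z)$ and geometric derivability is invariant under linear isomorphisms, $(\sub g)^{-1}$ is proto-differentiable at $\olm$ for $\Phi(\ox)$, its proto-derivative $D(\sub g)^{-1}(\olm,\Phi(\ox))$ being characterized by $w\in D(\sub g)^{-1}(\olm,\Phi(\ox))(u)\iff u\in D(\sub g)(\Phi(\ox),\olm)(w)$. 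Now I would assemble $F_1$: after the obvious permutation of coordinates, $\gph F_1$ is the Cartesian product $\gph N_\Th\times\gph(\sub g)^{-1}$, and since the tangent cone of a product of sets equals the product of the tangent cones (and geometric derivability of a product reduces to geometric derivability of the factors), $\gph F_1$ is geometrically derivable at $\big((\ox,\olm),(\bar v,\Phi(\ox))\big)$ with
\[
DF_1\big((\ox,\olm),(\bar v,\Phi(\ox))\big)(w,u)=DN_\Th(\ox,-\nabla_x L(\ox,\olm))(w)\times D(\sub g)^{-1}(\olm,\Phi(\ox))(u).
\]

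The second stage passes from $F_1$ to $G=F_0+F_1$. I would note that $\gph G$ is the image of $\gph F_1$ under the $C^1$-diffeomorphism $\Psi(z,y):=(z,\,y+F_0(z))$ of $(\R^n\times\R^m)\times(\R^n\times\R^m)$ (its inverse is $(z,y)\mapsto(z,y-F_0(z))$), which sends $\big((\ox,\olm),(\bar v,\Phi(\ox))\big)$ to $\big((\ox,\olm),(0,0)\big)$. Geometric derivability is preserved under $C^1$ diffeomorphisms, and the tangent cone transforms by the derivative $D\Psi\big((\ox,\olm),(\bar v,\Phi(\ox))\big)(a,b)=(a,\,b+\nabla F_0(\ox,\olm)a)$; hence $G$ is proto-differentiable at $(\ox,\olm)$ for $(0,0)$, and $\big((w,u),(p,q)\big)\in\gph DG\big((\ox,\olm),(0,0)\big)$ if and only if $(p,q)-\nabla F_0(\ox,\olm)(w,u)\in DF_1\big((\ox,\olm),(\bar v,\Phi(\ox))\big)(w,u)$. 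Since $\nabla F_0(\ox,\olm)(w,u)=\big(\nabla_{xx}^2 L(\ox,\olm)w+\nabla\Phi(\ox)^*u,\,-\nabla\Phi(\ox)w\big)$, substituting the product formula for $DF_1$ from the previous step yields exactly \eqref{pdg}.

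The main difficulty here is not any single deep ingredient but the careful use of two elementary calculus rules — invariance of geometric derivability together with the transformation $T_{\Psi(S)}(\Psi(\bar s))=D\Psi(\bar s)\big(T_S(\bar s)\big)$ under a $C^1$ change of coordinates, and the product rule $T_{A\times B}(a,b)=T_A(a)\times T_B(b)$ with its companion statement for geometric derivability — together with keeping track of the coordinate permutation that makes $\gph F_1$ an honest Cartesian product. If one prefers to bypass the coordinate-swap argument for $(\sub g)^{-1}$, an alternative is to observe that $(x,\lm)\mapsto N_\Th(x)\times(\sub g)^{-1}(\lm)$ is the subgradient mapping of the separable CPLQ function $(x,\lm)\mapsto\dd_\Th(x)+g^*(\lm)$ — using that $g^*$ is CPLQ and $\partial g^*=(\sub g)^{-1}$ — and to invoke Proposition~\ref{sop}(c) directly for that function, reading off the decoupled proto-derivative from its separable structure.
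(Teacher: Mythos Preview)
Your proof is correct and follows essentially the same route as the paper's: the decomposition $G=F_0+F_1$ is exactly the paper's $G=G_1+G_2$, the product-structure argument for $DF_1$ is the content of the paper's equivalence \eqref{tpd}, and your diffeomorphism step replacing $F_1$ by $F_0+F_1$ is precisely what the paper obtains by citing \cite[Proposition~5.2]{r89}. One small caution: the identity $T_{A\times B}(a,b)=T_A(a)\times T_B(b)$ is not valid in general but does hold here because you have already established that each factor is geometrically derivable---this is exactly the synchronization argument the paper spells out in proving the ``$\Longleftarrow$'' direction of \eqref{tpd}, so just be sure to flag that hypothesis when you invoke the product rule.
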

\begin{proof} Let $G=G_1+G_2$, where $G_1$ and $G_2$ are defined by 
$$
G_1(x,\lm)=\begin{bmatrix}
  \nabla_x L(x,\lm)\\
-\Phi(x)
\end{bmatrix}
 \quad \mbox{and}\quad G_2(x,\lm)=\begin{bmatrix}
N_\Th(x)\\
(\sub g)^{-1}(\lm)
\end{bmatrix}.
$$
Clearly,  $G_1$ is differentiable at $(\ox,\olm)$ and 
$$
\nabla G_1(\ox,\olm)=\begin{bmatrix}
\nabla^2_{xx}L(x,\lm)& \nabla \Phi(\ox)^*\\
-\nabla \Phi(\ox)& 0
\end{bmatrix}.
$$
We are going to show that $G_2$ is proto-differentiable at  $(\ox,\olm)$ for $(- \nabla_x L(\ox,\olm), \Phi(\ox))$. To do so, we first claim that 
\begin{equation}\label{tpd}
\big((w,u),(p,q)\big)\in T_{\ss\gph G_2}\big((\ox,\olm),(- \nabla_x L(\ox,\olm), \Phi(\ox))\big)\iff 
\begin{cases} (w,p)\in T_{\ss\gph N_\Th}\big(\ox, - \nabla_x L(\ox,\olm)\big),\\
(u,q)\in T_{\ss\gph (\sub g)^{-1}}\big(\olm, \Phi(\ox)\big).
\end{cases}
\end{equation}
The implication `$\implies$' follows directly from the definition of tangent cone. To prove the opposite implication, pick the pairs $(w,p)$ and $(u,q)$ from right-hand side of \eqref{tpd}.
By the latter, we find    sequences $t_k\dn 0$ and $(u_k,q_k)\to (u,q)$ such that $(\olm, \Phi(\ox))+t_k(u_k,q_k)\in \gph  (\sub g)^{-1}$ for all $k\in \N$. According to Proposition~\ref{sop}(c),
$N_\Th$ is proto-differentiable at $\ox$ for $- \nabla_x L(\ox,\olm)$. Thus, for the aforementioned sequence $\{t_k\b$, there exists a sequence $(w_k,p_k)\to (w,p)$ such that 
$(\ox,- \nabla_x L(\ox,\olm))+t_k(w_k,p_k)\in \gph N_\Th$ for all $k\in \N$. Combining these tells us that 
$$
\big((\ox, \olm), (- \nabla_x L(\ox,\olm), \Phi(\ox))\big)+t_k\big((w_k,u_k),(p_k,q_k)\big)\in \gph G_2\quad \mbox{for all}\;k\in \N,
$$
which clearly yields   $\big((w,u),(p,q)\big)\in T_{\ss\gph G_2}\big((\ox,\olm),(- \nabla_x L(\ox,\olm),\Phi(\ox))\big)$ and hence justifies \eqref{tpd}. 
To prove the proto-differentiability of $G_2$  at  $(\ox,\olm)$ for $(-\nabla_x L(\ox,\olm), \Phi(\ox))$,  it suffices to show that all the tangent vectors $\big((w,u),(p,q)\big)$ from the left-hand  side of \eqref{tpd}
are derivable. By \eqref{tpd}, this amounts to the derivability of the tangent vectors $(w,p)$ and $(u,q)$ from the right-hand side of \eqref{tpd}. To justify this, observe from Proposition~\ref{sop}(c) that 
the normal cone mapping $N_\Th$ is proto-differentiable at $\ox$ for $- \nabla_x L(\ox,\olm)$ and that the subgradient mapping $\sub g$ is proto-differentiable at $\Phi(\ox)$ for $\olm$.
By definition, these imply that both tangent vectors $(w,p)$ and $(u,q)$ are derivable, which proves  that $\big((w,u),(p,q)\big)$ is derivable. 
Appealing now to \cite[Proposition~5.2]{r89} and then using the differentiability of $G_1$ at $(\ox,\olm)$ and the proto-differentiability of $G_2$ at $(\ox,\olm)$ for $(- \nabla_x L(\ox,\olm), \Phi(\ox))$ confirm that $G$ is 
proto-differentiable at $(\ox,\olm)$ for $(0,0)$. Finally,  
we use again \cite[Proposition~5.2]{r89} to conclude for any $(w,u)\in \R^n\times \R^m$ that  
$$
DG\big((\ox,\olm),(0,0)\big)(w,u)= \nabla G_1(\ox,\olm)(w,u)+DG_2\big((\ox,\olm),(- \nabla_x L(\ox,\olm), \Phi(\ox))\big)(w,u).
$$
This along with \eqref{tpd} justifies \eqref{pdg} and so completes the proof.
\end{proof}

The  proto-derivative formula \eqref{pdg} of $G$ allows us to provide equivalent descriptions of the criticality and noncriticality of Lagrange multipliers for the KKT system \eqref{vs} as shown below.
\begin{Corollary}[equivalent descriptions of noncriticality]\label{charc}  Assume that $(\ox,\olm)$ is a solution to the KKT system \eqref{vs}. Then the following  conditions hold:
 \begin{itemize}[noitemsep,topsep=0pt]
 \item [{\rm (a)}] the multiplier $\olm$ is noncritical for the KKT system \eqref{vs} if and only if the implication 
\begin{equation}\label{equnc}
 (0,0)\in DG\big((\ox,\olm),(0,0)\big)(w,u) \implies w=0
\end{equation}
holds, where $G$ is taken from \eqref{GKKT} and $(w,u)\in \R^n\times \R^m$;
 \item [{\rm (b)}] the multipliers $\olm$ is noncritical to the KKT system \eqref{vs} if and only if we have 
 $$
 \begin{cases}
 0\in\nabla^2_{xx} L (\ox,\olm)w+\nabla \Phi(\ox)^*u + N_{K_\Th(\ox, - \nabla_x L(\ox,\olm))}(w),\\
 u\in D(\sub g) (\Phi(\ox),\olm ) (\nabla \Phi(\ox)w )
 \end{cases}
 \implies w=0.
 $$
 \end{itemize}
\end{Corollary}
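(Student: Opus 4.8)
The plan is to read off both equivalences directly from the proto-derivative formula \eqref{pdg} of Proposition~\ref{pro1}, using only one extra elementary fact: the proto-derivative of an inverse mapping is the inverse of the proto-derivative. Since $\gph(\sub g)^{-1}$ is obtained from $\gph\sub g$ by swapping its two coordinate blocks, one has $(u,q)\in T_{\gph(\sub g)^{-1}}(\olm,\Phi(\ox))$ if and only if $(q,u)\in T_{\gph\sub g}(\Phi(\ox),\olm)$, so that
$$\nabla\Phi(\ox)w\in D(\sub g)^{-1}(\olm,\Phi(\ox))(u)\iff u\in D(\sub g)(\Phi(\ox),\olm)(\nabla\Phi(\ox)w).$$
Substituting this into \eqref{pdg}, the inclusion $(0,0)\in DG((\ox,\olm),(0,0))(w,u)$ unwinds to the pair of conditions
$$0\in\nabla^2_{xx}L(\ox,\olm)w+\nabla\Phi(\ox)^*u+DN_\Th(\ox,-\nabla_x L(\ox,\olm))(w)\quad\text{and}\quad u\in D(\sub g)(\Phi(\ox),\olm)(\nabla\Phi(\ox)w).$$

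For part (a), I would show that a vector $w$ admits some $u$ satisfying these two conditions exactly when $w$ solves \eqref{crc}. The forward direction is immediate: inserting the membership $u\in D(\sub g)(\Phi(\ox),\olm)(\nabla\Phi(\ox)w)$ into the first condition yields $0\in\nabla^2_{xx}L(\ox,\olm)w+\nabla\Phi(\ox)^*D(\sub g)(\Phi(\ox),\olm)(\nabla\Phi(\ox)w)+DN_\Th(\ox,-\nabla_x L(\ox,\olm))(w)$, which is \eqref{crc}. Conversely, if \eqref{crc} holds then, unpacking the definition of the set on its right-hand side, there is a witness $u\in D(\sub g)(\Phi(\ox),\olm)(\nabla\Phi(\ox)w)$ with $0\in\nabla^2_{xx}L(\ox,\olm)w+\nabla\Phi(\ox)^*u+DN_\Th(\ox,-\nabla_x L(\ox,\olm))(w)$. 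Hence the set of $w$ solving \eqref{crc} coincides with the set of $w$ for which some $(w,u)$ satisfies $(0,0)\in DG((\ox,\olm),(0,0))(w,u)$; by Definition~\ref{dcn}, noncriticality of $\olm$ says the former set is $\{0\}$, which is precisely the implication \eqref{equnc}.

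For part (b), I would invoke the reduction-lemma identity \eqref{pdp}, namely $DN_\Th(\ox,-\nabla_x L(\ox,\olm))=N_{K_\Th(\ox,-\nabla_x L(\ox,\olm))}$, and substitute it into the two conditions displayed above; combined with part (a) this gives the claimed characterization verbatim. I do not expect a genuine obstacle here. The only steps calling for a bit of care are the elimination of $u$ in part (a) — essentially bookkeeping of set sums and images under $\nabla\Phi(\ox)^*$ — and keeping straight which argument slot of $D(\sub g)^{-1}$ corresponds to which variable; the substantive content has already been supplied by Proposition~\ref{pro1}, by Proposition~\ref{sop}(c), and by the reduction lemma \eqref{pdp}.
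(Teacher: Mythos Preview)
Your proposal is correct and follows essentially the same approach as the paper's proof, which simply cites \eqref{pdg} for part (a) and \eqref{pdp} for part (b). You have merely unpacked these terse citations: inverting the graphical derivative of $(\sub g)^{-1}$, eliminating the witness $u$ to recover \eqref{crc}, and then substituting \eqref{pdp} into the first condition to obtain (b).
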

\begin{proof} Part (a) follows from \eqref{pdg} and part (b) results from \eqref{pdp}.
\end{proof}
 
Note that if, in addition,  for all $i=1,\ldots,s$ we have the matrices $A_i=0$ in \eqref{PWLQ}, the CPLQ function $g$ reduces to a convex piecewise linear function.
In this case, the characterization of the noncriticality in Corollary~\ref{charc}(b) can be considerably simplified; see \cite[Theorem~3.3]{ms17}. 
Note also that in contrast with \eqref{equnc} the stronger implication 
\begin{equation*}
 (0,0)\in DG\big((\ox,\olm),(0,0)\big)(w,u) \implies w=0, \, u=0
\end{equation*}
is equivalent  by \cite[Theorem~4E.1]{dr} to the strong metric subregularity of $G$ at $\big((\ox,\olm),(0,0)\big)$. The latter  yields  $\Lm(\ox)=\{\olm\}$, meaning that the Lagrange multipliers associated with 
$\ox$ have to be unique. Observe that the implication \eqref{equnc} does not impose such a restriction on the Lagrange multiplier set $\Lm(\ox)$.

Below, we discuss the noncriticality of another important class of problems, which fits into the composite problem \eqref{comp}. 

\begin{Example}[noncriticality in extended linear-quadratic programming]\label{elqp}{\rm Suppose that the functions $\ph$,   $\Phi$, and $g$ from \eqref{comp} are given, respectively,  by 
$$
\ph(x)=\la q,x\ra+\sm \la Qx,x\ra, \;\; \Phi(x)=b-Ax \quad \mbox{with}\; x\in \R^n, 
$$
and 
\begin{equation}\label{theta}
g(z):=f_{\ss {\O,B}}(z)=\underset{u\in \O}\sup{\big\{\la{z},{u}\ra-\sm\la{u},{Bu}\ra\big\}}\quad \mbox{with}\; z\in \R^m, 
\end{equation}
where $q\in \R^n$, $b\in \R^m$, $Q$ is an $n\times n$ symmetric matrix, $A$ is an $n\times m$ matrix,  and where   $\O$  is  a polyhedral convex set in $\R^m$ and $B$ is   an  $m\times m$ symmetric  and positive-semidefinite matrix.
The composite problem \eqref{comp} with these initial data falls into the class of extended linear-quadratic programming problems, which goes back to Rockafellar and Wets \cite{rw86}. We know from \cite[Example~11.18]{rw}
that $g$ is CPLQ. If $(\ox,\olm)$ is a solution to the KKT system \eqref{vs}, adjusted for the given functions, then it follows from \eqref{gdpl} that 
\begin{eqnarray*}
&&D(\sub g) (\Phi(\ox),\olm ) (\nabla \Phi(\ox)w )=\sub\big(\sm  \d^2 f_{\ss {\O,B}}(\Phi(\ox),\olm )\big) (\nabla \Phi(\ox)w)\\
&=&\sub f_{\ss K_\O(\Phi(\ox),\Phi(\ox)-B\olm), B} \big(\nabla \Phi(\ox)w\big)=\big(N_{K_\O(\Phi(\ox),\Phi(\ox)-B\olm)}+B\big)^{-1}(\nabla \Phi(\ox)w),
\end{eqnarray*}
where the second equality comes from  \cite[Example~13.23]{rw}  and the last one comes form  \cite[Example~11.18]{rw}.
Using this and Corollary~\ref{charc}(b) tells us that 
$\olm$ is a noncritical   multiplier for \eqref{vs}
if and only if the following implication holds:
$$
 \begin{cases}
 0\in Qw-A^*u + N_{K_\Th(\ox, - \nabla_x L(\ox,\olm))}(w),\\
0\in Aw+Bu+ N_{K_\O(\Phi(\ox),\Phi(\ox)-B\olm)}(u)
 \end{cases}
 \implies w=0.
 $$
}
\end{Example}
We proceed now with a characterization of noncriticality of Lagrange multipliers via the calmness of the solution mapping $S:\R^n\times \R^m\tto \R^n\times \R^m$,
defined by 
\begin{equation}\label{maps}
S(v,p):=\big\{(x,\lm)\in\R^n\times\R^m\;\big|\;(v,p)\in G(x,\lm)\big\}\quad\mbox{with}\;\;(v,p)\in\R^n\times\R^m,
\end{equation}
where the mapping $G$ is taken from \eqref{GKKT}. In fact, 
the mapping $S$ can be viewed as the {\em solution map} to the   KKT system  of  the canonical perturbation of the composite  optimization problem \eqref{comp}, namely the problem 
\begin{equation}\label{pcoop}
\mbox{minimize}\;\;\;\ph_0(x)+g(\Phi(x)+p)-\la v,x\ra\quad\mbox{subject to}\;\;x\in\Th.
\end{equation}

\begin{Theorem}[characterization of noncriticality via clamness]\label{chcri} Assume 
 that $(\ox,\olm)$ is a solution to the KKT system \eqref{vs}. Then the following  conditions are equivalent:
 \begin{itemize}[noitemsep,topsep=0pt]
 \item [{\rm (a)}] the multiplier $\olm$ is noncritical for the KKT system \eqref{vs};
 \item [{\rm (b)}] there are neighborhoods $U$ of $(0,0)\in \R^n\times \R^m$ and $V$ of $(\ox,\olm)$ and a constant $\kappa\ge 0$ such that 
 \begin{equation}\label{semi}
 S(v,p)\cap V\subset\big(\{\ox\}\times\Lm(\ox)\big)+\kappa\big(\|v\|+\|p\|\big)\B
 \end{equation}
 holds for all $(v,p)\in U $.
 \end{itemize}
\end{Theorem}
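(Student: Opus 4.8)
The plan is to establish the two implications separately; (b)$\Rightarrow$(a) is short, while (a)$\Rightarrow$(b) rests on reducing $G$ near $\big((\ox,\olm),(0,0)\big)$ to a polyhedral multifunction via the two reduction lemmas. For (b)$\Rightarrow$(a): by Corollary~\ref{charc}(a) it suffices to show that $(0,0)\in DG\big((\ox,\olm),(0,0)\big)(w,u)$ forces $w=0$. Unraveling the definition of the graphical derivative, such a pair produces $t_k\dn 0$, $(w_k,u_k)\to(w,u)$ and $(a_k,b_k)\to(0,0)$ with $(x_k,\lm_k):=(\ox,\olm)+t_k(w_k,u_k)\in S(t_ka_k,t_kb_k)$; for large $k$ these points lie in $V$ and the perturbations lie in $U$, so \eqref{semi} gives $\|x_k-\ox\|\le\kappa t_k(\|a_k\|+\|b_k\|)$, and dividing by $t_k$ and passing to the limit yields $w=0$.

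For (a)$\Rightarrow$(b), first I would introduce the ``linearized'' multifunction $\check G(x,\lm):=DG\big((\ox,\olm),(0,0)\big)(x-\ox,\lm-\olm)$. By \eqref{pdg} together with \eqref{pdp} and \eqref{gdpl}, $\check G$ is the sum of an affine map and the pair $\big(N_{K_\Th(\ox,-\nabla_x L(\ox,\olm))}(x-\ox),\,D(\sub g)^{-1}(\olm,\Phi(\ox))(\lm-\olm)\big)$, whose graphs are finite unions of polyhedral convex sets (the second as in the proof of Proposition~\ref{ouli}(b)); hence $\check G$, and therefore $\check S:=\check G^{-1}$, is a polyhedral multifunction. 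The key structural step is to show that on a neighborhood of $\big((\ox,\olm),(0,0)\big)$ one has $\gph G=\gph(\check G+E)$, where $E(x,\lm)$ collects the differences of $\nabla_x L$ and $\Phi$ from their first-order Taylor expansions at $(\ox,\olm)$. This I would obtain by applying the reduction lemma \eqref{rl10} to $N_\Th$ at $\big(\ox,-\nabla_x L(\ox,\olm)\big)$ and Theorem~\ref{rlcp} to $\sub g$ at $\big(\Phi(\ox),\olm\big)$, then checking that the constant shifts these produce cancel against the linearization of $G_1$. Because $\lm\mapsto\nabla_x L(x,\lm)$ is affine, the remainder obeys the sharpened estimate $\|E(x,\lm)\|=o(\|x-\ox\|)$ as $(x,\lm)\to(\ox,\olm)$, which will be essential in the final step.

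Next I would identify $\check S(0,0)$ near $(\ox,\olm)$. Setting $w=x-\ox$, $u=\lm-\olm$, the inclusion $(0,0)\in\check G(x,\lm)$ is exactly the system of Corollary~\ref{charc}(b), so the assumed noncriticality forces $x=\ox$, i.e. $\check S(0,0)\subseteq\{\ox\}\times\R^m$; restricting the two reduction lemmas to the slices $x=\ox$ and $\oz=\Phi(\ox)$ then shows that near $\olm$ the defining conditions $\lm\in\sub g(\Phi(\ox))$ and $-\nabla_x L(\ox,\lm)\in N_\Th(\ox)$ of $\Lm(\ox)$ reduce to $\lm-\olm\in K_g(\Phi(\ox),\olm)^*$ and $-\nabla\Phi(\ox)^*(\lm-\olm)\in K_\Th(\ox,-\nabla_x L(\ox,\olm))^*$, which is precisely the description of $\check S(0,0)$ at $x=\ox$; hence $\check S(0,0)$ coincides with $\{\ox\}\times\Lm(\ox)$ in a neighborhood of $(\ox,\olm)$. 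Now invoke Robinson's upper Lipschitz property of polyhedral multifunctions (the result cited in the proof of Proposition~\ref{ouli}): there are $\ell\ge 0$ and $\delta>0$ with $\check S(s)\subset\check S(0,0)+\ell\|s\|\B$ for $\|s\|<\delta$. For $(x,\lm)\in S(v,p)$ close to $(\ox,\olm)$ and $(v,p)$ close to $(0,0)$, the identity $\gph G=\gph(\check G+E)$ gives $(x,\lm)\in\check S\big((v,p)-E(x,\lm)\big)$, hence $\dist\big((x,\lm),\check S(0,0)\big)\le\ell\big(\|(v,p)\|+\|E(x,\lm)\|\big)$; since the nearest point of $\check S(0,0)$ lies in $\{\ox\}\times\Lm(\ox)$ and $\|E(x,\lm)\|\le\ve\|x-\ox\|\le\ve\,\dist\big((x,\lm),\{\ox\}\times\Lm(\ox)\big)$, choosing $\ve=1/(2\ell)$ and rearranging yields \eqref{semi} with $\kappa=2\ell$.

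I expect the main obstacle to be the structural identity $\gph G=\gph(\check G+E)$ near the base point: it requires carefully matching the constant shifts produced by the two reduction lemmas so that the $\Phi(\ox)$- and $\nabla_x L(\ox,\olm)$-terms cancel against the linearization of $G_1$, verifying that all the neighborhoods involved can be taken simultaneously valid, and isolating the sharpened remainder bound $\|E(x,\lm)\|=o(\|x-\ox\|)$ — which genuinely uses the affineness of $\lm\mapsto\nabla_x L(x,\lm)$ — since the absorption in the last step would fail with only the crude estimate $o(\|(x,\lm)-(\ox,\olm)\|)$, which cannot be controlled by $\dist\big((x,\lm),\{\ox\}\times\Lm(\ox)\big)$.
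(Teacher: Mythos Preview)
Your argument for (b)$\Rightarrow$(a) is the same as the paper's. For (a)$\Rightarrow$(b), your proof is correct but follows a genuinely different route. The paper argues by contradiction: assuming the \emph{primal} estimate $\|x-\ox\|\le\kappa(\|v\|+\|p\|)$ fails, it extracts a unit direction $w$, applies the two reduction lemmas along the sequence to land $w$ in the criticality inclusion \eqref{crc}, and reaches a contradiction; then it derives the \emph{dual} estimate $\dist(\lm,\Lm(\ox))\le\kappa(\|v\|+\|p\|)$ separately, via Hoffman's lemma and the polyhedral decomposition $\Lm(\ox)=\Omega\cap\sub g(\Phi(\ox))$. You instead use the reduction lemmas structurally, to rewrite $G$ locally as $\check G+E$ with $\check G$ polyhedral and $\|E(x,\lm)\|=o(\|x-\ox\|)$, and then invoke Robinson's outer Lipschitz property once on the full inverse $\check S$, absorbing $E$ at the end. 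Your approach is more conceptual and treats the primal and dual parts simultaneously, bypassing Hoffman's lemma; the paper's approach, by isolating the primal estimate first, parallels (and motivates) the sharper primal estimates of Theorems~\ref{prim1} and~\ref{prim2}, whose proofs reuse exactly the same contradiction machinery. One small point you should make explicit: when you assert that ``the nearest point of $\check S(0,0)$ lies in $\{\ox\}\times\Lm(\ox)$'', you are using that this nearest point is close to $(\ox,\olm)$; this follows because $\check S(0,0)\subset\{\ox\}\times\R^m$ globally (noncriticality), so the nearest point is $(\ox,\mu)$ with $\mu$ the projection of $\lm$ onto a closed convex set containing $\olm$, hence $\|\mu-\olm\|\le\|\lm-\olm\|$.
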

\begin{proof} We begin by proving the implication (b)$\implies $(a). By Proposition~\ref{charc}(a), it suffices to show that \eqref{equnc} is satisfied.
To do so, pick $ (0,0)\in DG\big((\ox,\olm),(0,0)\big)(w,u) $ with $(w,u)\in \R^n\times \R^m$. So we find sequences $t_k\dn 0$ and $\big((w_k,u_k),(v_k,p_k)\big)\to \big((w,u),(0,0)\big)$
as $k\to \infty$ such that 
$$
\big((\ox,\olm),(0,0)\big)+t_k \big((w_k,u_k),(v_k,p_k)\big)\in \gph G\quad \mbox{for all}\; k\in \N.
$$
This clearly implies via \eqref{maps} that $(\ox+t_kw_k,\olm+t_ku_k)\in S(t_kv_k,t_kp_k)$ for all $k\in \N$. Using this together with \eqref{semi} indicates that 
$$
\|\ox+t_kw_k-\ox\|\le \kappa t_k(\|v_k\|+\|p_k\|)
$$ 
for all $k$ sufficiently large. The latter inequality clearly yields $w=0$. This  proves  \eqref{equnc} and hence (b) holds. 

Turning to the opposite implication, assume that (a) is satisfied. First we claim that there are a constant  $\kappa\ge 0$ and neighborhoods $U$ of $(0,0)$
and $V$ of $(\ox,\olm)$ such that for any $(v,p)\in U $ and any $(x,\lm)\in S(v,p)\cap V$ we have the estimate
\begin{equation}\label{upper2}
\|x-\ox\|\le\kappa\big(\|v\|+\|p\|\big).
\end{equation}
Suppose by contradiction that the claimed estimate  fails. Thus  for any $k\in \N$, there 
are   $(v_{k},p_{k})\in\B_{1/k}(0,0)$ and $(x_k,\lm_k)\in S(v_{k},p_{k})\cap\B_{1/k}(\ox,\olm)$ satisfying
$$
\frac{\|x_k-\ox\|}{\|v_{k}\|+\|p_{k}\|}\to\infty\;\mbox{ as }\;k\to\infty.
$$
Set $t_k:=\|x_k-\ox\|$ and hence obtain  $v_{k}=o( t_k)$ and $p_{k}=o(t_k)$. By passing to a subsequence if necessary, we can assume with no harm  that
\begin{equation*}\label{xi}
\frac{x_k-\ox}{t_k}\to w\;\mbox{ as }\;k\to\infty\;\mbox{ for some }\;0\ne w\in\R^n.
\end{equation*}
Since $(x_k,\lm_k)\in S(v_{k},p_{k})$ and $(\ox,\olm)\in S(0,0)$, we conclude from \eqref{maps} that 
$$
v_k-\nabla_x L(x_k,\lm_k)\in N_\Th(x_k)\quad \mbox{and}\quad -\nabla_x L(\ox,\olm)\in N_\Th(\ox).
$$
It follows from these and  the reduction lemma for a 
polyhedral convex set (see \eqref{rl1} or  \cite[Lemma~2E.4]{dr}) that for all $k$ sufficiently large we have 
$$
v_k-\big(\nabla_x L(x_k,\lm_k) -\nabla_x L(\ox,\olm)\big)\in N_{K_\Th(\ox, -\nabla_x L(\ox,\olm))}(x_k-\ox).
$$
By the definition of the Lagrangian $L$ from \eqref{lag}, we obtain 
\begin{eqnarray*}
 \nabla_x L(x_k,\lm_k)-\nabla_x L(\ox,\olm)& =& \nabla_x L(x_k,\olm)-\nabla_x L(\ox,\olm)+\nabla \Phi(x_k)^*(\lm_k-\olm)\\
&=&\nabla^2_{xx} L(\ox,\olm)(x_k-\ox)+\nabla \Phi(\ox)^*(\lm_k-\olm)+o(t_k).
\end{eqnarray*}
Combining these and remembering that $v_{k}=o( t_k)$ result in 
\begin{equation}\label{inc1}
\frac{o( t_k)}{t_k}-\nabla^2_{xx} L(\ox,\olm)\big(\frac{x_k-\ox}{t_k}\big)-\nabla \Phi(\ox)^*\big(\frac{\lm_k-\olm}{t_k}\big)\in N_{K_\Th(\ox, -\nabla_x L(\ox,\olm))}\big(\frac{x_k-\ox}{t_k}\big)
\end{equation}
for all $k$ sufficiently large. This tells us that $(x_k-\ox)/t_k\in K_\Th(\ox, -\nabla_x L(\ox,\olm))$ and thus $w\in K_\Th(\ox, -\nabla_x L(\ox,\olm))$. Moreover, since
$\Th$  is a polyhedral convex set, so is the critical cone $K_\Th(\ox, -\nabla_x L(\ox,\olm))$. Thus we get  the inclusion 
$$
N_{K_\Th(\ox, - \nabla_x L(\ox,\olm))}\big(\frac{x_k-\ox}{t_k}\big)\subset N_{K_\Th(\ox, - \nabla_x L(\ox,\olm))}(w)
$$
for all $k$ sufficiently large, which in combination with \eqref{inc1} implies that 
\begin{equation}\label{inc3}
\frac{o( t_k)}{t_k}-\nabla^2_{xx} L(\ox,\olm)\big(\frac{x_k-\ox}{t_k}\big)\in N_{K_\Th(\ox, - \nabla_x L (\ox,\olm))}(w)+\nabla \Phi(\ox)^*\big(\frac{\lm_k-\olm}{t_k}\big).
\end{equation}
To deal with the second term in the right-hand side of this inclusion, we utilize again $(x_k,\lm_k)\in S(v_{k},p_{k})$ and $(\ox,\olm)\in S(0,0)$ to conclude via \eqref{maps}, respectively, that 
$$
\lm_k\in \sub g(z_k)\quad \mbox{and}\quad \olm\in \sub g(\oz)\quad \mbox{with}\;\; z_k:= \Phi(x_k)+p_k,\;\oz:= \Phi(\ox)
$$
Using the established reduction lemma for CPLQ functions in Theorem~\ref{rlcp} tells us that for all $k$ sufficiently large we have 
\begin{equation}\label{inc2}
\frac{\lm_k-\olm}{t_k}\in D(\sub g)(\oz,\olm)\big(\frac{z_k-\oz}{t_k}\big).
\end{equation}
This, in particular, indicates that ${(z_k-\oz)}{/t_k}\in \dom D(\sub g)(\oz,\olm)=K_g(\oz,\olm)$, where the last equality comes from Proposition~\ref{sop}(c).
Since $K_g(\oz,\olm)$ is a polyhedral convex set and since  $p_{k}=o(t_k)$ and $(z_k-\oz)/t_k\to \nabla \Phi(\ox)w$ as $k\to \infty$, we arrive at  $\nabla \Phi(\ox)w\in K_g(\oz,\olm)=\dom D(\sub g)(\oz,\olm)$.
Appealing now to  the outer Lipschitzian property of the proto-derivative $D(\sub g)(\oz,\olm)$, obtained in Proposition~\ref{ouli}(b),  and to the fact that $\nabla \Phi(\ox)w\in K_g(\oz,\olm)$ confirms the existence of a constant 
$\ell\ge 0$ such that for all $k$ sufficiently large the inclusion 
$$
D(\sub g)(\oz,\olm)\big(\frac{z_k-\oz}{t_k}\big)\subset D(\sub g)(\oz,\olm)(\nabla \Phi(\ox)w)+\ell\big\|\frac{z_k-\oz}{t_k}-\nabla \Phi(\ox)w\big\|\B
$$
 holds. This inclusion along with \eqref{inc2} leads us to 
 \begin{eqnarray*}
 \nabla \Phi(\ox)^*\Big(\frac{\lm_k-\olm}{t_k}\Big)&\in &  \nabla \Phi(\ox)^*D(\sub g)(\oz,\olm)\big(\frac{z_k-\oz}{t_k}\big)\\
 &\subset & \nabla \Phi(\ox)^*\Big( D(\sub g)(\oz,\olm)(\nabla \Phi(\ox)w)+\ell\big\|\frac{z_k-\oz}{t_k}-\nabla \Phi(\ox)w\big\|\B \Big)\\
   &= & \nabla \Phi(\ox)^*D(\sub g)(\oz,\olm)(\nabla \Phi(\ox)w)+\ell\big\|\frac{z_k-\oz}{t_k}-\nabla \Phi(\ox)w\big\| \nabla \Phi(\ox)^*\B.
 \end{eqnarray*}
 Thus, we conclude from these relationships and \eqref{inc3} that for any $k$ sufficiently large there is a $b_k\in \B$ such that 
\begin{eqnarray}
 \frac{o( t_k)}{t_k}-\nabla^2_{xx} L(\ox,\olm)\big(\frac{x_k-\ox}{t_k}\big) -\ell\big\|\frac{z_k-\oz}{t_k}-\nabla \Phi(\ox)w\big\| \nabla \Phi(\ox)^*b_k\nonumber\\
 \in N_{K_\Th(\ox, - \nabla_x L(\ox,\olm))}(w)+ \nabla \Phi(\ox)^* D(\sub g)(\oz,\olm)(\nabla \Phi(\ox)w)\label{inc4}.
 \end{eqnarray}
 By Proposition~\ref{sop}(c), the proto-derivative $D(\sub g)(\oz,\olm)(\nabla \Phi(\ox)w)$ is a polyhedral convex set and so is $ \nabla \Phi(\ox)^* D(\sub g)(\oz,\olm)(\nabla \Phi(\ox)w)$.
 Since the normal cone $N_{K_\Th(\ox, - \nabla_x L(\ox,\olm))}(w)$ is also a polyhedral convex set, the set   on the right-hand side of  \eqref{inc4} is a polyhedral convex set and so  is closed.
 Passing to a subsequence if necessary, we can assume without loss of generality that the sequence $\{b_k\b$ is convergent. Letting $k\to \infty$ in \eqref{inc4} tells us that 
 $$
 0\in\nabla^2_{xx} L(\ox,\olm)w + \nabla \Phi(\ox)^* D(\sub g)(\oz,\olm)(\nabla \Phi(\ox)w)+N_{K_\Th(\ox, - \nabla_x L(\ox,\olm))}(w),
 $$
 a contradiction with the noncriticality of the multiplier $\olm$ since $w\neq 0$. This proves \eqref{upper2}. To justify \eqref{semi}, pick the neighborhoods $U$ and $V$ 
 from \eqref{upper2} and let $(v,w)\in U $ and any $(x,\lm)\in S(v,p)\cap V$. This results in via \eqref{maps} that $\lm\in \sub g(\Phi(x)+p)$ and $v-\nabla_x L(x,\lm)\in N_\Th(x)$. Shrinking the neighborhoods $U$ and $V$
 if necessary, we conclude from Proposition~\ref{ouli}(a) and the polyhedrality of $\Th$, respectively, that 
\begin{equation}\label{inc5}
  \sub g(\Phi(x)+p)\subset \sub g(\Phi(\ox))+\ell\| \Phi(x)+p-\Phi(\ox)\|\B\quad \mbox{and}\quad N_\Th(x)\subset N_\Th(\ox)
\end{equation}
 for some constant $\ell\ge 0$.
 This together with $\lm\in \sub g(\Phi(x)+p)$ ensures that $\lm=\lm'+\ell\| \Phi(x)+p-\Phi(\ox)\|b$ for some $\lm'\in \sub g(\Phi(\ox))$ and $b\in \B$. 
 Furthermore, we can assume by shrinking $U$ and $V$ again that there is  a constant $\ell'\ge 0$ such that for  any $(x,\lm)\in S(v,p)\cap V$ with $(v,w)\in U $ we have 
 \begin{equation}\label{inc6}
 \begin{cases}
  \| \Phi(x)-\Phi(\ox)\| \le \ell'\|x-\ox\|, \;\; \|\nabla\ph(x)-\nabla \ph(\ox)\|\le \ell'\|x-\ox\|, \\
  \|\nabla \Phi(x)-\nabla \Phi(\ox)\|\le \ell'\|x-\ox\|,\;\; \|\lm\|\le \ell'.
  \end{cases}
\end{equation}
 Observe also that the Lagrange multiplier set $\Lm(\ox)$ from \eqref{laset} can be equivalently expressed as 
 $$
 \Lm(\ox)=\O\cap \sub g(\Phi(\ox))\quad \mbox{with}\quad \O:=\big\{\lm\in\R^m\;\big|\; 0\in \nabla_x L(\ox,\lm)+N_\Th(\ox)\big\}.
 $$
Since both   $\O$ and $\sub g(\Phi(\ox))$ are polyhedral convex sets, it follows from \cite[Theorem~8.35]{io}
that there is a constant $\rho\ge 0$ such that 
$$
\dist(\lm, \Lm(\ox))\le \rho\big(\dist(\lm,\O)+ \dist(\lm, \sub g(\Phi(\ox))\big).
$$
Using the classical Hoffman lemma (cf. \cite[Lemma~3C.4]{dr}) gives a constant $\rho'\ge 0$ such that 
$$
\dist(\lm,\O) \le \rho' \dist\big(-\nabla_xL(\ox,\lm),N_\Th(\ox)\big).
$$
Combining these and using $\lm=\lm'+\ell\| \Phi(x)+p-\Phi(\ox)\|b$, $v-\nabla_xL(x,\lm)\in N_\Th(x)\subset N_\Th(\ox)$, \eqref{inc5}, and \eqref{inc6}, we arrive at the estimates 
\begin{eqnarray}
\dist(\lm, \Lm(\ox))&\le & \rho'' \Big(\dist\big(-\nabla_xL(\ox,\lm),N_\Th(\ox) \big)+\dist(\lm, \sub g(\Phi(\ox))\Big)\label{polysb}\\
&\le & \rho'' \Big( \| \nabla_xL(\ox,\lm)-\nabla_xL(x,\lm)+v\| +\ell\| \Phi(x)+p-\Phi(\ox)\| \Big)\nonumber\\
&\le & \rho'' \max\{\ell',1\} \Big(  \|\nabla \ph(x)-\nabla \ph(\ox)\|+ \|\lm\|\| \nabla \Phi(x)-\nabla \Phi(\ox)\|\nonumber\\
&&+\|v\| +\| \Phi(x)-\Phi(\ox)\|+\|p\|\Big) \nonumber\\
&\le & \rho''\max\{\ell',1\}  \Big( (2\ell'+\ell'^2)\|x-\ox\|+ \|v\|+\|p\|\Big) \nonumber\\
&\le & \rho'' \max\{\ell',1\}  \Big( (2\ell'+\ell'^2) \kappa(\|v\|+\|p\|)+ \|v\|+\|p\|\Big) \nonumber\\
&=&  \rho'' \max\{\ell',1\}  ( (2\ell'+\ell'^2) \kappa+1)\big(\|v\|+\|p\|\big) \nonumber,
\end{eqnarray}
where $\rho'':=\rho\max\{\rho',1\}$ and where the last inequality results from \eqref{upper2}. This estimate along with \eqref{upper2} justifies the claimed calmness of the solution mapping $S$ in (a) for 
the neighborhoods $U$ and $V$ and thus completes the proof.
\end{proof}

\begin{Remark}[characterization of noncriticality of variational systems]{\rm It is valuable to mention  that the given proof for Theorem~\ref{chcri} can be used to achieve a similar characterization 
of noncritical multipliers of the variational system 
\begin{equation}\label{vs2} 
0\in \Psi(x,\lm)+N_\Th(x),\;\;\lm\in \sub g\big(\Phi(x)\big)
\quad \mbox{with}\;\;\Psi(x,\lm):=   f(x)+\nabla \Phi(x)^*\lm,
\end{equation}
where $f:\R^n\to \R^n$ is a differentiable function and where   $g$, $\Phi $, and $\Th$ are taken from \eqref{comp}.
The critical and noncritical Lagrange  multipliers for \eqref{vs2} can be  defined as of those for  the KKT system \eqref{vs}.
While  reducing to the KKT system \eqref{vs} for  $f=\nabla \ph$,  the variational system \eqref{vs2} has important applications in sensitivity analysis of 
variational inequalities. 
}
\end{Remark}

When $\Th=\R^n$ and the CPLQ function $g$ is defined by \eqref{theta}, 
 the established characterization of the noncriticality in Theorem~\ref{chcri} boils down to \cite[Theorem~5.1]{dms}, where the idea of using the reduction lemma for a polyhedral convex set
in the characterization of the noncriticality was first appeared. Using similar approach for the composite problem \eqref{comp}
requires a counterpart of the reduction lemma for CPLQ functions, which was achieved in Theorem~\ref{rlcp}. 
When $g$ enjoys this representation, \eqref{vs} can cover the KKT systems of an important class of composite optimization problems, 
called  {\em extended nonlinear programs}; see \cite{r97} for more details and discussion about this class of optimization problems. 
  When $\Th=\R^n$ and $A_i=0$ for all $i=1,\ldots,s$ in \eqref{PWLQ}, meaning that   $g$ is piecewise linear, 
Theorem~\ref{chcri} reduces  to \cite[Theorem~4.1]{ms17}. The choices of $g=\dd_{\{0\}^s\times \R_-^{m-s}}$ and $\Th=\R^n$ for some $0\le s\le m$ allow to reduce the composite 
problem \eqref{comp} into a nonlinear programming problem with the $s$ equality constraints and the $m-s$ inequality constraints 
for which similar characterization of the noncriticality can be found in \cite[Theorem~1.40]{is14}. 

It is well known that the calmness of a set-valued mapping is equivalent to the metric subregularity of its inverse mapping (cf. \cite[Theorem~3H.3]{dr}).
This motivates us  to look for  an equivalent error bound estimate  of the calmness property \eqref{semi} of  the solution mapping $S$.
To do so, recall that the proximal mapping of  a function $f:\R^n\to \oR$ is defined by 
\begin{equation*}\label{pr}
\prox_f(x):={\rm argmin}_{z\in \R^n}\big\{f(z)+\sm\|x-z\|^2\big\},\quad x\in\R^n.
\end{equation*}

 \begin{Proposition}[error bound for KKT systems]\label{error}
Assume  that $(\ox,\olm)$ is a solution to the KKT system \eqref{vs}. Then the following  conditions are equivalent: 
 \begin{itemize}[noitemsep,topsep=0pt]
 \item [{\rm a)}] there are neighborhoods $U$ of $(0,0)\in \R^n\times \R^m$ and $V$ of $(\ox,\olm)$ and a constant $\kappa\ge 0$ such that 
 the solution mapping $S$ from \eqref{maps} satisfies the calmness property \eqref{semi};
\item[\rm b)]  there are numbers $\ve>0$ and $\kappa\ge 0$ such that the error bound estimate
\begin{equation}\label{subr3}
\|x-\ox\|+\dist\big(\lm,\Lambda(\ox)\big)\le\kappa\big(\dist\big(-\nabla_x L(x,\lm),N_\Th(x)\big)+\| \Phi(x)-\prox_g(\lm+ \Phi(x))\|\big)
\end{equation}
holds for any $(x,\lm)\in\B_\ve(\ox,\olm)$.
 \end{itemize}
\end{Proposition}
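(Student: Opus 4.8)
The plan is to show that, once the KKT system \eqref{vs} is rephrased through the proximal mapping, conditions (a) and (b) become two expressions of one and the same estimate, namely that $\|x-\ox\|+\dist(\lm,\Lm(\ox))$ is controlled near $(\ox,\olm)$ by the natural residual $r_1+r_2$ of \eqref{vs}. The two technical facts that drive the translation are: for the proper lsc convex (indeed CPLQ) function $g$ one has $\lm\in\sub g(z)\iff z=\prox_g(\lm+z)$, and $\prox_g$ is nonexpansive. Throughout I abbreviate $r_1(x,\lm):=\dist\big(-\nabla_x L(x,\lm),N_\Th(x)\big)$ and $r_2(x,\lm):=\|\Phi(x)-\prox_g(\lm+\Phi(x))\|$, so the right-hand side of \eqref{subr3} is exactly $r_1(x,\lm)+r_2(x,\lm)$; note $r_2$ is continuous with $r_2(\ox,\olm)=0$, since $\olm\in\sub g(\Phi(\ox))$ forces $\prox_g(\olm+\Phi(\ox))=\Phi(\ox)$. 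I also record, since the norm on $\R^n\times\R^m$ is the Euclidean one,
\begin{equation*}
\tfrac{1}{\sqrt2}\big(\|x-\ox\|+\dist(\lm,\Lm(\ox))\big)\le\dist\big((x,\lm),\{\ox\}\times\Lm(\ox)\big)\le\|x-\ox\|+\dist(\lm,\Lm(\ox)),
\end{equation*}
so that \eqref{semi} is equivalent to the estimate $\|x-\ox\|+\dist(\lm,\Lm(\ox))\le\kappa'\big(\|v\|+\|p\|\big)$ holding for all $(v,p)\in U$ and $(x,\lm)\in S(v,p)\cap V$, with $S$ from \eqref{maps}.

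For the implication (b)$\Rightarrow$(a), fix a small $(v,p)$ and $(x,\lm)\in S(v,p)\cap V$ with $V\subset\B_\ve(\ox,\olm)$. Then $v-\nabla_x L(x,\lm)\in N_\Th(x)$, giving $r_1(x,\lm)\le\|v\|$, and $\lm\in\sub g(\Phi(x)+p)$, i.e.\ $\Phi(x)+p=\prox_g(\lm+\Phi(x)+p)$, whence by the triangle inequality and nonexpansiveness of $\prox_g$,
\begin{equation*}
r_2(x,\lm)\le\|p\|+\|\prox_g(\lm+\Phi(x)+p)-\prox_g(\lm+\Phi(x))\|\le 2\|p\|.
\end{equation*}
Substituting $r_1(x,\lm)+r_2(x,\lm)\le\|v\|+2\|p\|$ into \eqref{subr3} and invoking the comparison above yields the reformulated \eqref{semi} with a new constant, after shrinking $U$ so that $S(v,p)\cap V$ stays inside $\B_\ve(\ox,\olm)$ for $(v,p)\in U$.

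The substantive direction is (a)$\Rightarrow$(b). Let $(x,\lm)$ be close to $(\ox,\olm)$. I may assume $x\in\Th$, for otherwise $r_1=+\infty$ and \eqref{subr3} is vacuous; and because the left-hand side of \eqref{subr3} is bounded on a fixed neighborhood while $r_1+r_2\ge\delta_0>0$ makes \eqref{subr3} hold with $\kappa$ large, I may further assume $r_1(x,\lm)+r_2(x,\lm)$ is below any prescribed threshold. The obstruction is that $\lm$ need not belong to $\rge\sub g$, so $(x,\lm)$ may lie in no $S(v,p)$ at all and the calmness of $S$ cannot be applied to it directly. To get around this I pass to the prox-corrected multiplier $\tilde\lm:=\lm+\Phi(x)-\prox_g(\lm+\Phi(x))$, which satisfies $\|\tilde\lm-\lm\|=r_2(x,\lm)$ and, by the prox optimality condition, $\tilde\lm\in\sub g\big(\Phi(x)+e\big)$ with $e:=\prox_g(\lm+\Phi(x))-\Phi(x)$ and $\|e\|=r_2(x,\lm)$. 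Choosing $v:=\nabla_x L(x,\tilde\lm)+P_{N_\Th(x)}\big(-\nabla_x L(x,\tilde\lm)\big)$ gives $v-\nabla_x L(x,\tilde\lm)\in N_\Th(x)$ and, using $\nabla_x L(x,\tilde\lm)=\nabla_x L(x,\lm)-\nabla\Phi(x)^*e$, the bound $\|v\|=r_1(x,\tilde\lm)\le r_1(x,\lm)+M\,r_2(x,\lm)$ with $M$ a bound on $\|\nabla\Phi(x)\|$ near $\ox$. Thus $(x,\tilde\lm)\in S(v,e)$ with $\|v\|+\|e\|\le C\big(r_1(x,\lm)+r_2(x,\lm)\big)$, and once $(x,\lm)$ is close enough to $(\ox,\olm)$ and $r_1+r_2$ small enough we have $(x,\tilde\lm)\in V$ and $(v,e)\in U$. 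Applying the reformulated \eqref{semi} to $(x,\tilde\lm)\in S(v,e)\cap V$ delivers $\|x-\ox\|\le\kappa'C(r_1+r_2)$ and $\dist(\tilde\lm,\Lm(\ox))\le\kappa'C(r_1+r_2)$; since $\dist(\lm,\Lm(\ox))\le\dist(\tilde\lm,\Lm(\ox))+\|\tilde\lm-\lm\|$, adding the two bounds gives \eqref{subr3}. I expect the bookkeeping in this last direction — selecting $\ve$ and the threshold $\delta_0$ compatibly and tracking how the prox-correction perturbs each residual — to be the only real obstacle; note in particular that, because the correction lands us back in $\gph\sub g$ at no cost, we need here neither the reduction lemma nor the outer Lipschitz property of $\sub g$, only the $1$-Lipschitz continuity of $\prox_g$.
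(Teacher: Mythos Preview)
Your proposal is correct and follows essentially the same route as the paper's proof. In particular, your prox-corrected multiplier $\tilde\lm=\lm+\Phi(x)-\prox_g(\lm+\Phi(x))$ and perturbation $e=\prox_g(\lm+\Phi(x))-\Phi(x)$ coincide with the paper's $\lm-p$ and $p$, respectively, and the case split ``$r_1+r_2$ small versus bounded away from $0$'' is exactly the paper's distinction between $(\nabla_x L(x,\lm)+N_\Th(x))\cap\B_{\ve/2}(0)$ being nonempty or empty; the only cosmetic difference is that you project $-\nabla_x L(x,\tilde\lm)$ onto $N_\Th(x)$ after the $\nabla\Phi(x)^*e$ shift, whereas the paper projects $-\nabla_x L(x,\lm)$ first and then shifts.
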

\begin{proof}  Assume first that (b) holds. Pick $(v,p)\in \B_\ve(0,0)$ and $(x,\lm)\in \B_\ve(\ox,\olm)\cap S(v,p)$ with   $\ve$ taken from (b)
and conclude  via \eqref{maps} that $\lm\in \sub g(\Phi(x)+p)$ and $v-\nabla_x L(x,\lm)\in N_\Th(x)$. The former together with  $\prox_g=(I+\sub g)^{-1}$ (cf. \cite[Proposition~12.19]{rw})
yields  $\prox_g(\lm+ \Phi(x)+p)= \Phi(x)+p$. Appealing now to \eqref{subr3} and shrinking $\ve$ if necessary  to secure the inclusion $N_\Th(x)\subset N_\Th(\ox)$ bring us to the estimates
\begin{eqnarray*}
\|x-\ox\|+\dist\big(\lm,\Lm(\ox)\big)&\le&\kappa\big(\dist\big(-\nabla_x L(x,\lm),N_\Th(x)\big)+\| \Phi(x)-\prox_g(\lm + \Phi(x)\|\big)\\
&\le&\kappa\big(\|-\nabla_x L(x,\lm)-v+\nabla_x L(x,\lm)\|\\
&&+\|\prox_g(\lm+ \Phi(x)+p)-\prox_g(\lm + \Phi(x))\| +\|p\|\big)\\
&\le&2\kappa\big(\|v\|+\|p\|\big),
\end{eqnarray*}
and thus prove (a). 

Suppose now that (a) is satisfied. 
Since $\nabla \Phi $ is continuous at $\ox$, we find some constants $\ve>0$ and  $\rho>1$ for which  we have $\|\nabla \Phi(x)\|\leq \rho$ for all $x\in\B_{\ve}(\ox)$.
Shrinking $\ve$ if necessary, we assume without loss of generality that 
 $ \B_{\ve/\sqrt{(\rho+1)^2+1}}(0,0)\subset U$ and $\B_{\ve/\rho}(\ox,\olm)\subset V$, where $U$ and $V$ come from  (a). Pick $(x,\lm)\in\B_{\ve/8\rho}(\ox,\olm)$ and set $p:= \prox_g\big(\lm+ \Phi(x)\big)-\Phi(x)$.
  Since $\Phi $ and  $\prox_g$  are continuous and since  $\prox_g\big(\olm+ \Phi(\ox)\big)= \Phi(\ox)$, we can assume by shrinking $\ve$ if necessary that $p\in \B_{\ve/2\rho}(0)$.
 Moreover, the definition of $p$ and the identity  $\prox_g=(I+\sub g)^{-1}$  tell us that $\lm-p\in \sub g(\Phi(x)+p)$.
Suppose that $\big(\nabla_x L(x,\lm)+N_\Th(x)\big)\cap \B_{\ve/2}(0)\neq \emptyset$ and so  choose   $u\in \big(\nabla_x L(x,\lm)+N_\Th(x)\big)\cap \B_{\ve/2}(0)$ such that 
\begin{equation*}\label{ee1}
\dist\Big(0,\big(\nabla_x L(x,\lm)+N_\Th(x)\big)\cap \B_{\ve/2}(0)\Big)=\|u\|.
\end{equation*}
Thus we have  $(v,p)\in \B_{\ve/\sqrt{(\rho+1)^2+1}}(0,0)\subset U $ with $v:=u-\nabla \Phi(x)^*p $. By the definitions of $p$ and $v$, it follows from \eqref{maps} that   $(x,\lm-p)\in S(v,p)\cap V$.
Using the Lipschitz continuity of the distance function together with (a) yields the estimates
\begin{eqnarray*}
\|x-\ox\|+\dist\big(\lm,\Lm(\ox)\big)&\le&\|x-\ox\|+\dist\big(\lm-p,\Lm(\ox)\big) +\|p\|\\
 &\le&\kappa\big(\|v\|+\|p\|\big)+\|p\|\\
&\le&(\kappa\rho+\kappa+1)\Big(\dist\big(0,\big(\nabla_x L(x,\lm)+N_\Th(x)\big)\cap \B_{\ve/2}(0)\big)\\
&&+\| \Phi(x)-\prox_g(\lm + \Phi(x))\|\Big)\\
&=&(\kappa\rho+\kappa+1) \Big(\dist\big(-\nabla_x L(x,\lm),N_\Th(x)\big)+ \| \Phi(x)-\prox_g(\lm + \Phi(x))\|\Big),
\end{eqnarray*}
where the last equality comes from   $\big(\nabla_x L(x,\lm)+N_\Th(x)\big)\cap \B_{\ve/2}(0)\neq \emptyset$, which implies that 
$$
\dist\big(0,\big(\nabla_x L(x,\lm)+N_\Th(x)\big)\cap \B_{\ve/2}(0)\big)=\dist\big(0,\nabla_x L(x,\lm)+N_\Th(x)\big)=\dist\big(-\nabla_x L(x,\lm),N_\Th(x)\big).
$$
The above estimates prove (b) for all $(x,\lm)\in\B_{\ve/8\rho}(\ox,\olm)$ with $\big(\nabla_x L(x,\lm)+N_\Th(x)\big)\cap \B_{\ve/2}(0)\neq \emptyset$.
If the latter condition fails,   we conclude for all  $(x,\lm)\in\B_{\ve/8\rho}(\ox,\olm)$ that 
\begin{eqnarray*}
\dist\big(-\nabla_x L(x,\lm),N_\Th(x)\big)> \frac{\ve}{2}\ge \frac{\ve}{8\rho}\ge  \|x-\ox\|+\|\lm-\olm\|\ge \|x-\ox\|+ \dist\big(\lm,\Lm(\ox)\big).
\end{eqnarray*}
This clearly verifies \eqref{subr3} for this case and hence completes the proof.
\end{proof}  

Our next goal is to explore the relationship between the noncriticality of a Lagrange multiplier and the second-order sufficient condition for the composite problem \eqref{comp}.
The latter, as shown in the coming sections, plays a major role in the convergence analysis of the basic SQP method for this problem. Given  a solution $(\ox,\olm)$  to \eqref{vs}, the second-order sufficient 
for the composite problem \eqref{comp} at $(\ox,\olm)$ is formulated by 
\begin{equation}\label{sosc}
\langle\nabla_{xx}^2L(\bar x,\olm)w,w\rangle+\d^2g (\Phi(\bar x),\olm) (\nabla \Phi(\ox)w )>0\quad \mbox{for all }\; w\in \D\setminus\{0\},
\end{equation}
where the convex cone $\D$ comes from \eqref{coned}.
We show below that the second-order sufficient condition \eqref{sosc} yields the noncriticality of Lagrange multipliers.
\begin{Proposition}[noncriticality via second-order sufficient conditions]\label{nocs} Assume that $(\ox,\olm)$ is a solution to the KKT system \eqref{vs}.
If the second-order sufficient condition \eqref{sosc} holds at $(\ox,\olm)$, then $\olm$ is a noncritical Lagrange multiplier for \eqref{vs}.
\end{Proposition}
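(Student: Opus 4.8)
The plan is to argue by contraposition: assuming that $\olm$ is a \emph{critical} multiplier for \eqref{vs}, I will manufacture a nonzero vector $w\in\D$ at which the left-hand side of \eqref{sosc} vanishes, contradicting the second-order sufficient condition. By Corollary~\ref{charc}(b), criticality of $\olm$ supplies a nonzero $w\in\R^n$ and some $u\in\R^m$ with
\begin{equation*}
-\nabla_{xx}^2L(\ox,\olm)w-\nabla\Phi(\ox)^*u\in N_{K_\Th(\ox,-\nabla_x L(\ox,\olm))}(w)\quad\text{and}\quad u\in D(\sub g)\big(\Phi(\ox),\olm\big)\big(\nabla\Phi(\ox)w\big).
\end{equation*}
Membership in a normal cone forces $w\in K_\Th(\ox,-\nabla_x L(\ox,\olm))$, while the second inclusion, being nonempty, forces $\nabla\Phi(\ox)w\in\dom D(\sub g)(\Phi(\ox),\olm)=K_g(\Phi(\ox),\olm)$ by Proposition~\ref{sop}(c); in view of the description \eqref{coned} of $\D$ this already gives $w\in\D\setminus\{0\}$.

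It then remains to evaluate $\la\nabla_{xx}^2L(\ox,\olm)w,w\ra+\d^2 g(\Phi(\ox),\olm)(\nabla\Phi(\ox)w)$ and show it is $0$. First I would pair the first inclusion with $w$: since $K_\Th(\ox,-\nabla_x L(\ox,\olm))$ is a convex cone, every vector in its normal cone at $w$ is orthogonal to $w$ (test with the competitors $0$ and $2w$), whence $\la\nabla_{xx}^2L(\ox,\olm)w,w\ra+\la u,\nabla\Phi(\ox)w\ra=0$. Next I would identify $\la u,\nabla\Phi(\ox)w\ra$ with $\d^2 g(\Phi(\ox),\olm)(\nabla\Phi(\ox)w)$. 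Setting $y:=\nabla\Phi(\ox)w\in K_g(\Phi(\ox),\olm)$, pick $i\in{\mathfrak J}(y)$ (nonempty by \eqref{cc2}), so that $y\in K_{C_i}(\Phi(\ox),\olm_i)$ with $\olm_i:=\olm-A_i\Phi(\ox)-a_i$ as in Proposition~\ref{sop}, and, by the proto-derivative formula \eqref{gdpw}, $u-A_i y\in N_{K_{C_i}(\Phi(\ox),\olm_i)}(y)$; pairing with $y$ (again a convex cone) gives $\la u,y\ra=\la A_i y,y\ra$, which by \eqref{pwfor} equals $\d^2 g(\Phi(\ox),\olm)(y)$. (Alternatively, via \eqref{gdpl} one has $u\in\sub\big(\sm\,\d^2 g(\Phi(\ox),\olm)\big)(y)$, and since $\sm\,\d^2 g(\Phi(\ox),\olm)$ is convex and positively homogeneous of degree two, Euler's relation yields $\la u,y\ra=\d^2 g(\Phi(\ox),\olm)(y)$ directly.) Combining the two identities shows $\la\nabla_{xx}^2L(\ox,\olm)w,w\ra+\d^2 g(\Phi(\ox),\olm)(\nabla\Phi(\ox)w)=0$ with $w\in\D\setminus\{0\}$, contradicting \eqref{sosc}. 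Hence \eqref{crc} admits only $w=0$, i.e.\ $\olm$ is noncritical.

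A second, essentially equivalent route is to invoke Proposition~\ref{soacr}: a nonzero stationary point $w$ of \eqref{axic} satisfies $-2\nabla_{xx}^2L(\ox,\olm)w\in\sub\big(h+\dd_{K_\Th(\ox,-\nabla_x L(\ox,\olm))}\big)(w)$, where $h(w):=\d^2 g(\Phi(\ox),\olm)(\nabla\Phi(\ox)w)$ is convex and positively homogeneous of degree two, and the same Euler-type computation produces a nonzero $w\in\D$ of objective value $0$ in \eqref{axic}, against \eqref{sosc}. I expect the only genuinely delicate point of either argument to be the passage from the generalized-equation (proto-derivative) form of criticality to the scalar identity above; this rests solely on the cone structure of the critical cones $K_\Th(\ox,-\nabla_x L(\ox,\olm))$ and $K_g(\Phi(\ox),\olm)$ together with the piecewise-quadratic formula \eqref{pwfor}, and requires no estimates or limiting arguments.
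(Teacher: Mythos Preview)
Your proposal is correct and follows essentially the same route as the paper: both arguments pick a nonzero $w$ satisfying \eqref{crc}, show $w\in\D$ via Proposition~\ref{sop}(c) and \eqref{pdp}, and then reduce \eqref{crc} to the scalar identity $\la\nabla_{xx}^2L(\ox,\olm)w,w\ra+\d^2 g(\Phi(\ox),\olm)(\nabla\Phi(\ox)w)=0$ by pairing with $w$ and exploiting that the relevant critical cones are convex cones. Your alternative route via \eqref{gdpl} and the degree-two homogeneity of $\d^2 g(\Phi(\ox),\olm)$ is exactly how the paper establishes the key identity $\la u,\nabla\Phi(\ox)w\ra=\d^2 g(\Phi(\ox),\olm)(\nabla\Phi(\ox)w)$ (it uses the test points $(1\pm\ve)\nabla\Phi(\ox)w$ in the subgradient inequality), while your primary route via the explicit formula \eqref{gdpw} is a slightly more concrete variant of the same computation.
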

\begin{proof} To justify this, pick a $w\in \R^n$ satisfying \eqref{crc}. We are going to show that $w=0$. To this end, by \eqref{crc}, we find 
$u\in D(\sub g) (\Phi(\bar x),\olm) (\nabla \Phi(\ox)w )$ and $q\in  DN_\Th(\ox,-\nabla_xL(\ox,\olm)) (w )$ for which we have 
\begin{equation}\label{soeq}
\langle\nabla^2_{xx}L(\ox,\olm)w,w\rangle+\la u,\nabla \Phi(\ox)w\ra+\la q,w\ra=0.
\end{equation}
We claim now 
\begin{equation}\label{ssr}
\begin{cases}
\la u,\nabla \Phi(\ox)w\ra=\d^2g (\Phi(\bar x),\olm) (\nabla \Phi(\ox)w )\quad \mbox{and}\\
 \la q,w\ra= \d^2 \dd_\Th(\ox,-\nabla_x L(\ox,\olm))(w)=\dd_{K_\Th(\ox,-\nabla_x L(\ox,\olm))}(w).
 \end{cases}
\end{equation}
Indeed, the last equality also  falls directly out of \eqref{ssp}. 
 To prove the second equality, observe  from \eqref{pdp}  that $q\in N_{K_\Th(\ox,-\nabla_xL(\ox,\olm))}(w )$. Since the critical cone $K_\Th(\ox,-\nabla_xL(\ox,\olm))$ is convex, we get $\la q,w\ra=0$,
 which together with $w\in K_\Th(\ox,-\nabla_xL(\ox,\olm))$ justifies the second equality.  
To justify  the first equality in \eqref{ssr}, we conclude from    \eqref{gdpl}  that $u\in \sub\big(\sm  \d^2g (\Phi(\bar x),\olm) \big)(\nabla \Phi(\ox)w )$. This along with Proposition~\ref{sop}(b) tells us 
that 
\begin{equation}\label{pe80}
\nabla \Phi(\ox)w\in\dom \d^2g (\Phi(\bar x),\olm)  =K_g(\Phi(\bar x),\olm).
\end{equation}
Using again Proposition~\ref{sop}(b)  shows that   $\d^2g (\Phi(\bar x),\olm)$ is a convex function. By  the definition of the subdifferential in convex analysis, we arrive at 
\begin{equation*}
\la u,v-\nabla \Phi(\ox)w\ra\le\sm \d^2g (\Phi(\bar x),\olm) (v)-\sm \d^2g (\Phi(\bar x),\olm) (\nabla \Phi(\ox)w)\quad \mbox{for all}\;\; v\in \R^m.
\end{equation*}
Let $\ve\in(0,1)$ and set $v:=(1\pm\ve)\nabla \Phi(\ox)w$. Since the second subderivative is positive homogeneous of degree $2$, the above inequality leads us to 
\begin{equation*}
\pm\la u,\nabla \Phi(\ox)w\ra\le\frac{\ve\pm 2}{2} \d^2g (\Phi(\bar x),\olm) (\nabla \Phi(\ox)w ),
\end{equation*}
which in turn results in  $\d^2 g(\Phi(\ox),\bar\lm)(\nabla \Phi(\ox)w)=\la u,\nabla \Phi(\ox)w\ra$ by letting $\ve\dn 0$. This   proves the first equality in \eqref{ssr}.
Combining \eqref{soeq}-\eqref{pe80} brings us to 
$$
\langle\nabla^2_{xx}L(\ox,\olm)w,w\rangle+\d^2g (\Phi(\bar x),\olm) (\nabla \Phi(\ox)w )=0, \quad w\in \D.
$$
By \eqref{sosc}, we conclude that $w=0$, implying that $\olm$ is a noncritical Lagrange multiplier.
\end{proof}

Note that in general the second-order sufficient condition \eqref{sosc} is strictly stronger than the noncriticality; see \cite[Example~3]{is12}
for   an example of a nonlinear program that shows this fact. These condition  are, however, equivalent when the Lagrange multiplier set $\Lambda(\ox)$ from \eqref{laset} is a singleton
and the stationary point $\ox$ is in fact a local minimum of the composite optimization problem \eqref{comp} as shown  below. 
To achieve  this goal, we are going first to present a simple but useful characterization of uniqueness of Lagrange multipliers for \eqref{comp},
which is a direct consequence  of our recent result in \cite[Theorem~8.1]{mmsmor} for \eqref{comp} with $\Th=\R^n$.  

\begin{Proposition}[characterization of uniqueness of Lagrange multipliers]\label{unic}
Assume that $(\ox,\olm)$ is a solution to the KKT system \eqref{vs}. Then the following conditions are equivalent:
 \begin{itemize}[noitemsep,topsep=0pt]
 \item [{\rm a)}] for the Lagrange multiplier set $\Lm(\ox)$ from \eqref{laset}, we have $\Lm(\ox)=\{\olm\}$;
\item[\rm b)]   the dual condition 
\begin{equation}\label{dual}
-\nabla \Phi(\ox)^*u\in K_\Th \big(\ox,-\nabla_x L(\ox,\olm) \big)^*, \;\; u\in K_ g \big(\Phi(\bar x),\olm \big)^*\implies u=0
\end{equation}
is satisfied.
 \end{itemize}
\end{Proposition}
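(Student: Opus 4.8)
The plan is to reduce the statement to the case $\Th=\R^n$, where the characterization is available as \cite[Theorem~8.1]{mmsmor}, by passing to the equivalent reformulation of \eqref{comp} recorded in Remark~\ref{remeq}. Accordingly, set $\psi(x,y):=\dd_\Th(x)+g(y)$, which is CPLQ by \cite[Exercise~10.22(a)]{rw}, and $\Psi(x):=(x,\Phi(x))$, which is twice continuously differentiable with $\nabla\Psi(\ox)^*(u_1,u_2)=u_1+\nabla\Phi(\ox)^*u_2$ for $(u_1,u_2)\in\R^n\times\R^m$. Then \eqref{comp} becomes $\mini\,\ph(x)+\psi(\Psi(x))$ subject to $x\in\R^n$, and \cite[Theorem~8.1]{mmsmor} applies to this problem.

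The first step is to match the KKT data of the two formulations. Since $\sub\psi(x,y)=N_\Th(x)\times\sub g(y)$, a direct check shows that $(\ox,\lm)$ solves \eqref{vs} if and only if $\big(\ox,(-\nabla_x L(\ox,\lm),\lm)\big)$ solves the KKT system of the reformulated problem, and, more generally, that the Lagrange multiplier set of the reformulated problem at $\ox$ equals $\big\{(-\nabla_x L(\ox,\lm),\lm)\,\big|\,\lm\in\Lm(\ox)\big\}$; in particular, this set is a singleton precisely when $\Lm(\ox)=\{\olm\}$. Writing $\omu:=(-\nabla_x L(\ox,\olm),\olm)$, the second step is to compute the critical cone $K_\psi(\Psi(\ox),\omu)$: because $\psi$ is a separable sum and the subderivative of a separable sum is the sum of the subderivatives of the summands, one gets $K_\psi(\Psi(\ox),\omu)=K_\Th\big(\ox,-\nabla_x L(\ox,\olm)\big)\times K_g\big(\Phi(\ox),\olm\big)$ — using here that $-\nabla_x L(\ox,\olm)\in N_\Th(\ox)$ and that $K_{\dd_\Th}(\ox,v)=K_\Th(\ox,v)$ — and hence, by the product rule $(A\times B)^*=A^*\times B^*$ for polar cones, $K_\psi(\Psi(\ox),\omu)^*=K_\Th(\ox,-\nabla_x L(\ox,\olm))^*\times K_g(\Phi(\ox),\olm)^*$.

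With these in hand, apply \cite[Theorem~8.1]{mmsmor} to the reformulated problem: uniqueness of its Lagrange multiplier at $\ox$ is equivalent to the implication ``$\nabla\Psi(\ox)^*u=0$ and $u\in K_\psi(\Psi(\ox),\omu)^*\implies u=0$''. Writing $u=(u_1,u_2)$ and substituting the product formulas, this reads: $u_1+\nabla\Phi(\ox)^*u_2=0$, $u_1\in K_\Th(\ox,-\nabla_x L(\ox,\olm))^*$, and $u_2\in K_g(\Phi(\ox),\olm)^*$ together force $u_1=u_2=0$; eliminating $u_1=-\nabla\Phi(\ox)^*u_2$, this is precisely condition (b). Combined with the first step, this yields the equivalence of (a) and (b).

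The only delicate point is the bookkeeping of the reduction — checking that the Lagrange multiplier sets correspond as claimed and that the separable structure of $\psi$ propagates correctly through the critical cone and its polar — each of which is routine, so no new argument beyond \cite[Theorem~8.1]{mmsmor} and Remark~\ref{remeq} is needed. If a self-contained route is preferred, one can argue directly: $\Lm(\ox)$ is the intersection of the polyhedral set $\{\lm\,|\,-\nabla_x L(\ox,\lm)\in N_\Th(\ox)\}$ with the polyhedral convex set $\sub g(\Phi(\ox))$, so it reduces to $\{\olm\}$ if and only if the corresponding tangent cones at $\olm$ meet only at $0$; the reduction lemma \eqref{rl10} identifies $T_{N_\Th(\ox)}(-\nabla_x L(\ox,\olm))$ with $K_\Th(\ox,-\nabla_x L(\ox,\olm))^*$, Theorem~\ref{rlcp} together with Proposition~\ref{sop}(c) identifies $T_{\sub g(\Phi(\ox))}(\olm)$ with $K_g(\Phi(\ox),\olm)^*$, and pulling the first tangent cone back through the linear map $-\nabla\Phi(\ox)^*$ recovers condition (b) once more.
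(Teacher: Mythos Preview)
Your proposal is correct and follows essentially the same route as the paper: reduce to the unconstrained reformulation of Remark~\ref{remeq}, match the multiplier sets, and invoke \cite[Theorem~8.1]{mmsmor}. The only cosmetic difference is that the paper expresses the dual condition from \cite{mmsmor} as $D(\sub\psi)\big((\ox,\Phi(\ox)),(\omu,\olm)\big)(0,0)\cap\ker[I\;\nabla\Phi(\ox)^*]=\{(0,0)\}$ and then uses Proposition~\ref{sop}(c) (namely $D(\sub g)(\oz,\ov)(0)=K_g(\oz,\ov)^*$) together with the product argument behind \eqref{tpd} to land on $K_\Th^*\times K_g^*$, whereas you go straight to $K_\psi=K_\Th\times K_g$ and take polars; these are the same computation. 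Your closing alternative argument via tangent cones to the polyhedral pieces of $\Lm(\ox)$ is a valid self-contained route that the paper does not pursue (the identity $T_{\sub g(\Phi(\ox))}(\olm)=K_g(\Phi(\ox),\olm)^*$ you use is exactly \eqref{pe21}, justified there through \cite[Theorem~13.14]{rw}).
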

\begin{proof}  We showed in Remark~\ref{remeq} that the composite problem \eqref{comp} can be equivalently reformulated as \eqref{comp3}. 
It is not hard to see that $\Lm(\ox)=\{\olm\}$ if and only if the set of Lagrange multipliers associated with $\ox$ for \eqref{comp3} is $\{(\omu,\olm)\}$ with $\omu:=-\nabla_x L(\ox,\olm)$.
By \cite[Theorem~8.1]{mmsmor}, the latter amounts to the dual condition 
$$
D(\sub \psi)\big((\ox, \Phi(\ox)),(\omu,\olm)\big)(0,0)\cap \ker \begin{bmatrix}I& \nabla \Phi(\ox)^*\end{bmatrix}=\big\{(0,0)\big\},
$$
where $\psi$ comes from \eqref{comp3}. Since we have $\sub \psi(\ox, \Phi(\ox))=N_\Th(\ox)\times \sub g(\Phi(\ox))$ (cf. \cite[Proposition~10.5]{rw}),  a similar argument as the proof of \eqref{tpd} shows that 
\begin{eqnarray*}
D(\sub \psi)\big((\ox, \Phi(\ox)),(\omu,\olm)\big)(0,0)&=&DN_\Th(\ox,\omu)(0)\times D(\sub g)(\Phi(\ox),\olm)(0)\\
&=&K_\Th \big(\ox,-\nabla_x L(\ox,\olm) \big)^*\times  K_ g \big(\Phi(\bar x),\olm \big)^*,
\end{eqnarray*}
where the last equality results from Proposition~\ref{sop}(c). Combining these proves the claimed equivalence. 
\end{proof}

The  dual condition \eqref{dual} was first 
introduced in \cite{ms19} for constrained optimization problems and was observed therein that it is equivalent to the {\em strict Robinson constraint qualification} (cf. see \cite[equation~(4.4)]{ms19})
for ${\cal C}^2$-cone reducible constrained optimization problems. The latter condition boils down to the  {  strict  Mangasarian--Fromovitz constraint qualification}
for classical nonlinear programming problems; see \cite[Remark~4.49]{bs} for  more detail on this subject. 

We are now in a position to present the promised equivalence between the noncriticality and the second-order sufficient condition \eqref{sosc} when the set of Lagrange multipliers of \eqref{comp}
is a singleton. 
\begin{Theorem}[equivalence between noncriticality and  second-order sufficient condition]\label{smrso}
Assume that $(\ox,\olm)$ is a solution to the KKT system \eqref{vs}. Then the following conditions are equivalent:
\begin{itemize}[noitemsep,topsep=0pt]
 \item [{ \rm (a)}] the second-order sufficient condition \eqref{sosc} holds at $(\ox,\olm)$ and  $\Lm(\ox)=\{\olm\}$;
\item[ \rm (b)]   the second-order sufficient condition \eqref{sosc} holds at $(\ox,\olm)$ and the dual condition \eqref{dual} is satisfied;
\item[ \rm (c)] the multiplier $\olm$ is noncritical for \eqref{vs}, $\Lm(\ox)=\{\olm\}$, and $\ox$ is a local minimizer of \eqref{comp};
\item[\rm  (d)] the solution mapping $S$ from \eqref{maps} is isolated calm at $\big((0,0),(\ox,\olm)\big)$ and $\ox$ is a local minimizer of \eqref{comp}.
 \end{itemize}
\end{Theorem}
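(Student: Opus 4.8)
The plan is to establish the cycle of implications (a) $\Leftrightarrow$ (b) $\Rightarrow$ (c) $\Rightarrow$ (d) $\Rightarrow$ (a), leaning heavily on the machinery assembled in Sections~\ref{sect02} and~\ref{sect03}. The equivalence (a)$\Leftrightarrow$(b) is immediate from Proposition~\ref{unic}, which tells us that $\Lm(\ox)=\{\olm\}$ is equivalent to the dual condition \eqref{dual}; this is essentially a restatement and requires no real work beyond invoking that proposition.

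For (b)$\Rightarrow$(c): the noncriticality of $\olm$ follows directly from Proposition~\ref{nocs}, since \eqref{sosc} holds. The uniqueness $\Lm(\ox)=\{\olm\}$ is again Proposition~\ref{unic}. It remains to see that $\ox$ is a local minimizer of \eqref{comp}. This is where I would use Proposition~\ref{sooc}(b): the second-order condition there is exactly \eqref{sosc} (note that in \eqref{sosc} the maximum over $\lm\in\Lm(\ox)$ collapses to the single multiplier $\olm$ precisely because $\Lm(\ox)=\{\olm\}$), so we get the quadratic growth condition $\ph(x)+g(\Phi(x))\ge \ph(\ox)+g(\Phi(\ox))+\frac{\ell}{2}\|x-\ox\|^2$ on a neighborhood of $\ox$ in $\Th$, which in particular makes $\ox$ a strict local minimizer. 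One subtlety: Proposition~\ref{sooc} requires the basic constraint qualification \eqref{rcq}; I would verify that \eqref{rcq} is implied by the dual condition \eqref{dual} (taking $w=0$ in the relevant cones, $K_\Th(\ox,-\nabla_xL(\ox,\olm))^*\supset N_\Th(\ox)$ and $K_g(\Phi(\ox),\olm)^*\supset N_{\dom g}(\Phi(\ox))$, so \eqref{dual} is the stronger statement) — this is a short containment argument on polar cones.

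For (c)$\Rightarrow$(d): by Corollary~\ref{charc}(a) and the remark following it, combining noncriticality of $\olm$ with the uniqueness $\Lm(\ox)=\{\olm\}$ upgrades the implication \eqref{equnc} to the stronger implication $(0,0)\in DG((\ox,\olm),(0,0))(w,u)\Rightarrow w=0,\,u=0$; indeed if $w=0$ then noncriticality's reasoning reduces the inclusion to $u\in D(\sub g)(\Phi(\ox),\olm)(0)=K_g(\Phi(\ox),\olm)^*$ together with $\nabla\Phi(\ox)^*u\in K_\Th(\ox,-\nabla_xL(\ox,\olm))^*$, and \eqref{dual} forces $u=0$. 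By \cite[Theorem~4E.1]{dr} this is exactly the strong metric subregularity of $G$ at $((\ox,\olm),(0,0))$, equivalently the isolated calmness of $S=G^{-1}$ at $((0,0),(\ox,\olm))$. That $\ox$ is a local minimizer is carried along from (c).

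For (d)$\Rightarrow$(a): isolated calmness of $S$ gives, via \cite[Theorem~4E.1]{dr} again, the strong metric subregularity of $G$, hence the implication $(0,0)\in DG((\ox,\olm),(0,0))(w,u)\Rightarrow (w,u)=(0,0)$; in particular $w=0$ is forced, which by Corollary~\ref{charc}(a) is the noncriticality of $\olm$, and taking $w=0$ in that implication and using the proto-derivative formula \eqref{pdg} together with Proposition~\ref{sop}(c) recovers $D(\sub g)(\Phi(\ox),\olm)(0)\cap\ker[\,I\;\;\nabla\Phi(\ox)^*\,]=\{0\}$, i.e. \eqref{dual}, hence $\Lm(\ox)=\{\olm\}$ by Proposition~\ref{unic}. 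So far this only yields noncriticality plus uniqueness; to obtain the full strength of (a), namely the second-order sufficient condition \eqref{sosc}, I would invoke the hypothesis that $\ox$ is a local minimizer together with the second-order necessary condition of Proposition~\ref{sooc}(a): on $\D\setminus\{0\}$ the expression $\langle\nabla^2_{xx}L(\ox,\olm)w,w\rangle+\d^2 g(\Phi(\ox),\olm)(\nabla\Phi(\ox)w)$ is $\ge 0$, and I would argue it cannot vanish, because a vanishing $w\ne 0$ in $\D$ would produce, via the subdifferential characterization of $\d^2 g$ and of $\dd_{K_\Th}$ used in the proof of Proposition~\ref{nocs}, a nonzero solution of the criticality inclusion \eqref{crc} — contradicting noncriticality of $\olm$. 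I expect this last step, turning "noncritical + local min" into the strict inequality \eqref{sosc}, to be the main obstacle: it is the reverse direction of Proposition~\ref{nocs}, and it genuinely uses minimality (not just stationarity) to exclude the degenerate case where the critical second-order form is zero. The argument mirrors the reduction in the proof of Proposition~\ref{nocs} but run backwards — from $\langle\nabla^2_{xx}L(\ox,\olm)w,w\rangle+\d^2 g(\Phi(\ox),\olm)(\nabla\Phi(\ox)w)=0$ with $w\in\D$, one extracts $u\in D(\sub g)(\Phi(\ox),\olm)(\nabla\Phi(\ox)w)$ and $q\in N_{K_\Th(\ox,-\nabla_xL(\ox,\olm))}(w)$ realizing the two pieces, assembling a nonzero solution of \eqref{crc}.
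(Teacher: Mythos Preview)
Your proposal is correct and arrives at the same destination as the paper, but the route differs in two places worth noting.

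For (c)$\Leftrightarrow$(d), the paper does not pass through the Levy--Rockafellar criterion \cite[Theorem~4E.1]{dr}. Instead it invokes Theorem~\ref{chcri} directly: noncriticality of $\olm$ is equivalent to the calmness estimate \eqref{semi}, and when $\Lambda(\ox)=\{\olm\}$ that estimate \emph{is} isolated calmness of $S$. Conversely, isolated calmness of $S$ at $\big((0,0),(\ox,\olm)\big)$ forces $\Lambda(\ox)$ to agree with $\{\olm\}$ in a neighborhood of $\olm$, and the convexity of $\Lambda(\ox)$ then upgrades this to $\Lambda(\ox)=\{\olm\}$ globally; isolated calmness trivially implies \eqref{semi}, hence noncriticality by Theorem~\ref{chcri}. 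Your approach via the graphical-derivative criterion is equally valid and perhaps more transparent about the role of the dual condition \eqref{dual}, but it bypasses Theorem~\ref{chcri}, which the paper has invested effort in proving.

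For the recovery of \eqref{sosc} from noncriticality plus local minimality, your description of ``running Proposition~\ref{nocs} backwards'' is the right intuition but is stated imprecisely: merely knowing that the second-order form vanishes at some $w\in\D\setminus\{0\}$ does not by itself let you ``extract'' $u\in D(\sub g)(\Phi(\ox),\olm)(\nabla\Phi(\ox)w)$ and $q\in N_{K_\Th(\ox,-\nabla_xL(\ox,\olm))}(w)$ satisfying \eqref{crc}. What makes this work is that the second-order necessary condition gives $\ge 0$ on all of $\D$, so such a $w$ is a \emph{minimizer} of problem \eqref{axic}, hence a stationary point of it. The paper closes this step in one line by invoking Proposition~\ref{soacr}, which says precisely that noncriticality is equivalent to $w=0$ being the only stationary point of \eqref{axic}. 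Your argument becomes complete once you insert this minimizer-to-stationary-point observation; the subdifferential calculus you allude to is exactly what is packaged inside Proposition~\ref{soacr}.
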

\begin{proof} The equivalence between (a) and (b) results directly  from Proposition~\ref{unic}. The equivalence between (c) and (d) 
comes from Theorem~\ref{chcri} and the fact that the Lagrange multiplier set  $\Lm(\ox)$ is convex. 

Turning now to the equivalence between (a) and (c), assume first that (a) holds. Appealing to Proposition~\ref{nocs} indicates that 
$\olm$ is a noncritical multiplier for \eqref{vs}. Moreover, since $\Lm(\ox)=\{\olm\}$, Proposition~\ref{sooc}(b) tells us that $\ox$ is a local minimizer of \eqref{comp} and so we arrive at (c).

Finally, suppose that (c) is satisfied. It follows from $\Lm(\ox)=\{\olm\}$ and Proposition~\ref{unic} that the dual condition \eqref{dual} fulfills. 
Observe also that 
$$
K_g (\Phi(\ox),\olm)=\big\{u\in\R^m\big|\;\d g (\Phi(\ox) )(u)=\langle u,\bar\lm\rangle\big\}\subset\dom\d g(\Phi(\ox) )=T_{\ss\dom g} (\Phi(\ox)).
$$
This together with the definition of the critical cone $K_\Th(\ox,-\nabla_x L(\ox,\olm))$ brings us to the inclusions 
\begin{equation}\label{pe14}
N_{\ss\dom g} (\Phi(\ox))\subset K_g (\Phi(\ox),\olm)^*\quad \mbox{and}\quad N_\Th(\ox)\subset N_\Th(\ox)+ [\nabla_x L(\ox,\olm)]= K_\Th(\ox,-\nabla_x L(\ox,\olm))^*.
\end{equation}
These inclusions,  combined with the dual condition \eqref{dual}, imply the validity of the constraint qualification \eqref{rcq}. 
Remembering that $\Lm(\ox)=\{\olm\}$ and that $\ox$ is a local minimum of \eqref{comp} and appealing to Propsoition~\ref{sooc}(a) yield 
$$
\langle\nabla_{xx}^2L(\bar x,\lm)w,w\rangle+\d^2 g(\Phi(\bar x),\lm ) (\nabla \Phi(\ox)w )\ge 0\quad \mbox{for all}\; w\in \D,
$$
which in turn gives us the second-order sufficient condition \eqref{sosc} because $\olm$ is noncritical. In fact, if \eqref{sosc} fails, 
by the inequality above we find  $w\in \D\setminus\{0\}$ that is a minimizer of problem \eqref{axic}. Thus, $w$ is 
a stationary point of problem \eqref{axic}, which is not possible by Proposition~\ref{soacr} since $\olm$ is a noncritical multiplier for \eqref{vs}. This proves (a) and hence ends the proof.
\end{proof}

Note that the characterization of the  isolated calmness of the solution mapping $S$ via the second-order sufficient condition -- the equivalence between (a) and (d) in Theorem~\ref{smrso} -- was first accomplished 
 in \cite[Theorem~2.6]{dr97} for NLPs and was extended in \cite[Theorem~7.5]{ms17} for the composite problem \eqref{comp} with $g$ piecewise linear and $\Th=\R^n$.
 The latter equivalence was recently established using a different approach for \eqref{comp} with $\Th=\R^n$ in \cite[Theorem~5.1]{be}. 
 Note that instead of the isolated calmness of the solution mapping $S$
in Theorem~\ref{smrso}(d), the authors in \cite{be} used the strong metric subregularity of the mapping $G$, taken from  \eqref{GKKT}. However, since we have $S=G^{-1}$, these notions are equivalent.
It is important to notice that  the conditions (b) and (c) in Theorem~\ref{smrso} did not appear in \cite{be}. 

\section{Primal Estimates for KKT Systems}\sce  \label{sect04} 

In this section, we aim to establish sharper estimates for the solution mapping $S$ from \eqref{maps} that play  major roles in the characterization of primal superlinear convergence (see Theorem~\ref{psqp})
of the quasi-Newton SQP method via the Dennis-Mor\'e condition \eqref{dmc}  for the composite optimization problem \eqref{comp}. To achieve such a characterization via \eqref{dmc}, we need 
a calmness property of the solution mapping $S$ similar to \eqref{semi} in which $\|v\|$ is replaced with $\|P_\D(v)\|$, where $P_\D$ stands for the projection mapping onto the convex cone $\D$.
The price for achieving such a sharper estimate   involving   $P_\D$ is  that we require to assume the second-order sufficient condition \eqref{sosc}, which is strictly stronger than the noncriticality assumption in Theorem~\ref{chcri},
and that we only can obtain such an  estimate for the primal part of any pair $(x,\lm)$. The latter, however, suffices for the primal superlinear convergent of the quasi-Newton SQP method as shown in Theorem~\ref{psqp}.
Note that as  Theorem~\ref{chcri}, the  proof of the following result mainly revolves around the reduction lemma from Theorem~\ref{rlcp}.

\begin{Theorem}[primal estimates via second-order sufficient conditions]\label{prim1} Assume that $(\ox,\olm)$ is a solution to the KKT system \eqref{vs} and that 
the second-order sufficient condition \eqref{sosc} holds at $(\ox,\olm)$. Then there are neighborhoods $U$ of $(0,0)\in \R^n\times \R^m$ and $V$ of $(\ox,\olm)$ and a constant $\kappa\ge 0$ such that 
for any $(v,p)\in U$ and any  $(x,\lm)\in  S(v,p)\cap V$ the estimate 
 \begin{equation}\label{semi2}
\|x-\ox\| \le \kappa\big(\|P_\D(v)\|+\|p\|\big)
 \end{equation}
 holds, where the solution mapping $S$ comes from \eqref{maps} and where $P_\D$ stands for the projection mapping onto the convex cone $\D$, defined in  \eqref{coned}.
\end{Theorem}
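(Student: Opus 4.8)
The plan is to argue by contradiction, mimicking the structure of the proof of Theorem~\ref{chcri} but keeping track of the projection onto $\D$. Suppose the estimate fails; then there are sequences $(v_k,p_k)\to(0,0)$ and $(x_k,\lm_k)\in S(v_k,p_k)$ with $(x_k,\lm_k)\to(\ox,\olm)$ and $\|x_k-\ox\|/(\|P_\D(v_k)\|+\|p_k\|)\to\infty$. Set $t_k:=\|x_k-\ox\|$, so $P_\D(v_k)=o(t_k)$ and $p_k=o(t_k)$, and pass to a subsequence so that $(x_k-\ox)/t_k\to w$ for some $w\neq 0$. As in Theorem~\ref{chcri}, using $v_k-\nabla_xL(x_k,\lm_k)\in N_\Th(x_k)$, the reduction lemma for polyhedral sets \eqref{rl1}, the Taylor expansion of $\nabla_xL$, and the reduction lemma for CPLQ functions from Theorem~\ref{rlcp} applied to $\lm_k\in\sub g(\Phi(x_k)+p_k)$, together with the outer Lipschitzian property of the proto-derivative from Proposition~\ref{ouli}(b), I would obtain for large $k$ a relation of the form
\begin{equation*}
\frac{v_k+o(t_k)}{t_k}-\nabla^2_{xx}L(\ox,\olm)\Big(\frac{x_k-\ox}{t_k}\Big)-\ell_k\nabla\Phi(\ox)^*b_k\in N_{K_\Th(\ox,-\nabla_xL(\ox,\olm))}(w)+\nabla\Phi(\ox)^*D(\sub g)(\oz,\olm)(\nabla\Phi(\ox)w),
\end{equation*}
where $\ell_k\to 0$ and $b_k\in\B$, and where along the way one learns $w\in K_\Th(\ox,-\nabla_xL(\ox,\olm))$ and $\nabla\Phi(\ox)w\in K_g(\Phi(\ox),\olm)$, i.e. $w\in\D$. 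The sole new ingredient relative to Theorem~\ref{chcri} is that the driving term now carries $v_k/t_k$ rather than $o(t_k)/t_k$, and $v_k$ is only known to satisfy $P_\D(v_k)=o(t_k)$, not $v_k=o(t_k)$.

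The key step is therefore to handle the term $v_k/t_k$. Here I would use the Moreau decomposition $v_k=P_\D(v_k)+P_{\D^*}(v_k)$ and observe that the cone $N_{K_\Th(\ox,-\nabla_xL(\ox,\olm))}(w)+\nabla\Phi(\ox)^*D(\sub g)(\oz,\olm)(\nabla\Phi(\ox)w)$ on the right-hand side, for $w\in\D$, is exactly the set appearing in the criticality inclusion \eqref{crc} localized at $w$; by the structure of $\D$ in \eqref{coned} and the proto-derivative formulas in Proposition~\ref{sop}(c) and \eqref{pdp}, this cone contains $(\nabla\Phi(\ox)^*D(\sub g)(\oz,\olm)(0))+N_{K_\Th}(w)\supset\nabla\Phi(\ox)^*K_g(\oz,\olm)^*+\ldots$, and more to the point its polar is related to $\D$. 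The cleanest route is to note that $\D^\ast$ is contained in that right-hand cone — indeed $\D=K_\Th(\ox,-\nabla_xL(\ox,\olm))\cap\nabla\Phi(\ox)^{-1}K_g(\Phi(\ox),\olm)$, so $\D^\ast=K_\Th(\ldots)^\ast+\nabla\Phi(\ox)^\ast K_g(\ldots)^\ast=N_{K_\Th(\ldots)}(0)+\nabla\Phi(\ox)^\ast D(\sub g)(\oz,\olm)(0)$, which sits inside the $w$-localized cone — so the component $P_{\D^\ast}(v_k)/t_k$ can be absorbed into the right-hand side, leaving only $P_\D(v_k)/t_k=o(1)$ on the left. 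Then letting $k\to\infty$, using closedness of the (polyhedral) right-hand cone and convergence of $b_k$ along a further subsequence, yields
\begin{equation*}
0\in\nabla^2_{xx}L(\ox,\olm)w+\nabla\Phi(\ox)^*D(\sub g)(\oz,\olm)(\nabla\Phi(\ox)w)+N_{K_\Th(\ox,-\nabla_xL(\ox,\olm))}(w),\qquad w\in\D\setminus\{0\},
\end{equation*}
i.e. $w$ is a nonzero stationary point of \eqref{axic}; by Proposition~\ref{soacr} this contradicts noncriticality of $\olm$, which holds here by Proposition~\ref{nocs} because \eqref{sosc} is assumed. This contradiction establishes \eqref{semi2}.

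The main obstacle I anticipate is making the absorption of $P_{\D^\ast}(v_k)$ rigorous: one must verify that $\nabla\Phi(\ox)^\ast D(\sub g)(\oz,\olm)(0)=\nabla\Phi(\ox)^\ast K_g(\Phi(\ox),\olm)^\ast$ (this is the "in particular" clause of Proposition~\ref{sop}(c)), that $N_{K_\Th(\ldots)}(0)\subset N_{K_\Th(\ldots)}(w)$ for $w\in K_\Th(\ldots)$ (true since $K_\Th(\ldots)$ is a convex cone), and that $D(\sub g)(\oz,\olm)(0)\subset D(\sub g)(\oz,\olm)(\nabla\Phi(\ox)w)$ when $\nabla\Phi(\ox)w\in K_g(\oz,\olm)$ (again the cone-monotonicity of the proto-derivative of a convex subgradient mapping, since $\tfrac12\d^2 g(\oz,\olm)$ is convex PLQ and its subdifferential is the proto-derivative by \eqref{gdpl}, so its values grow monotonically along the recession direction at $0$). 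Granting these three inclusions, $\D^\ast$ lies in the right-hand cone at $w$ and the argument closes; the remaining bookkeeping with the neighborhoods $U,V$ and the various Taylor remainders is routine and parallels Theorem~\ref{chcri} verbatim. Note also that, unlike in Theorem~\ref{chcri}, we only need and only obtain an estimate on the primal component $\|x-\ox\|$, so no Hoffman-type argument for $\dist(\lm,\Lambda(\ox))$ is required.
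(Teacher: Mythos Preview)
Your contradiction setup and the derivation of $w\in\D\setminus\{0\}$ are fine and parallel the paper, but the ``absorption'' step is where the argument breaks. Both monotonicity claims you rely on are false:
\begin{itemize}
\item For a convex cone $K$ and $w\in K$, one has $N_K(w)=K^*\cap w^\perp\subset K^*=N_K(0)$, so the inclusion $N_{K_\Th}(0)\subset N_{K_\Th}(w)$ goes the \emph{wrong way}.
\item The inclusion $D(\sub g)(\oz,\olm)(0)\subset D(\sub g)(\oz,\olm)(\eta)$ also fails in general: take $m=1$, $g(z)=z^2/2$, $\oz=\olm=0$; then $D(\sub g)(0,0)(0)=\{0\}$ while $D(\sub g)(0,0)(\eta)=\{\eta\}$ for $\eta\neq 0$.
\end{itemize}
Consequently $\D^*$ is \emph{not} contained in the right-hand set $N_{K_\Th(\ldots)}(w)+\nabla\Phi(\ox)^*D(\sub g)(\oz,\olm)(\nabla\Phi(\ox)w)$, and you cannot absorb $P_{\D^*}(v_k)/t_k$ into it. (More structurally: what you would actually need is $-\D^*$ inside the recession cone of that set, and this fails for the same reason.) So the passage to the limit does not deliver the criticality inclusion, and the route through Propositions~\ref{soacr} and \ref{nocs} does not close.

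The paper handles the $v_k$ term differently. From $P_\D(v_k)=o(t_k)$ it first extracts $v_k+o(t_k)\in\D^*$, then \emph{pairs} the whole KKT identity with $w\in\D$ rather than trying to absorb a set-valued term. This gives the scalar inequality
\[
\big\langle \nabla^2_{xx}L(\ox,\olm)(x_k-\ox),w\big\rangle+\langle\lm_k-\olm,\nabla\Phi(\ox)w\rangle+\langle q_k+\nabla_xL(\ox,\olm),w\rangle+o(t_k)\le 0,
\]
where $q_k\in N_\Th(x_k)$ is the normal-cone element. The third term vanishes because $q_k+\nabla_xL(\ox,\olm)\in N_{K_\Th(\ldots)}(w)$ and $K_\Th(\ldots)$ is a cone. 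For the second term one uses the reduction lemma and outer Lipschitzian property as you do, together with the identity $\langle u,\nabla\Phi(\ox)w\rangle=\d^2 g(\oz,\olm)(\nabla\Phi(\ox)w)$ for any $u\in D(\sub g)(\oz,\olm)(\nabla\Phi(\ox)w)$ (the computation from Proposition~\ref{nocs}). Dividing by $t_k$ and passing to the limit yields
\[
\langle\nabla^2_{xx}L(\ox,\olm)w,w\rangle+\d^2 g(\oz,\olm)(\nabla\Phi(\ox)w)\le 0,\qquad w\in\D\setminus\{0\},
\]
contradicting \eqref{sosc} directly---no detour through noncriticality is needed. The essential missing idea in your proposal is this pairing with $w$; it is what converts the uncontrolled component $P_{\D^*}(v_k)$ into a sign via $\langle\D^*,\D\rangle\le 0$ instead of trying to swallow it into a set that is too small.
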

\begin{proof}
Suppose by contradiction that the claimed estimate  fails. Thus  for any $k\in \N$, there 
are   $(v_{k},p_{k})\in\B_{1/k}(0,0)$ and $(x_k,\lm_k)\in S(v_{k},p_{k})\cap\B_{1/k}(\ox,\olm)$ satisfying
$$
\frac{\|x_k-\ox\|}{\|P_\D(v_{k})\|+\|p_{k}\|}\to\infty\;\mbox{ as }\;k\to\infty.
$$
Set $t_k:=\|x_k-\ox\|$ and hence obtain  $P_\D(v_{k})=o( t_k)$ and $p_{k}=o(t_k)$. By passing to a subsequence if necessary, we can assume that 
\begin{equation*}\label{xi}
\frac{x_k-\ox}{t_k}\to w\;\mbox{ as }\;k\to\infty\;\mbox{ with some }\;0\ne w\in\R^n.
\end{equation*}
Since $(x_k,\lm_k)\in S(v_{k},p_{k})$ and $(\ox,\olm)\in S(0,0)$, we conclude from \eqref{maps} that 
$$
v_k-\nabla_x L(x_k,\lm_k)=q_k\;\;\mbox{with}\;\;q_k\in N_\Th(x_k)\quad \mbox{and}\quad -\nabla_x L(\ox,\olm)\in N_\Th(\ox).
$$
It follows from these and  the reduction lemma for a 
polyhedral convex set (see \eqref{rl1} or  \cite[Lemma~2E.4]{dr}) that for all $k$ sufficiently large we have 
$$
v_k-\big(\nabla_x L(x_k,\lm_k) -\nabla_x L(\ox,\olm)\big)\in N_{K_\Th(\ox, -\nabla_x L(\ox,\olm))}(x_k-\ox).
$$
This tells us that $(x_k-\ox)/t_k\in K_\Th(\ox, -\nabla_x L(\ox,\olm))$ and thus $w\in K_\Th(\ox, -\nabla_x L(\ox,\olm))$. Since   
 $\Th$  is a polyhedral convex set, so is the critical cone $K_\Th(\ox, -\nabla_x L(\ox,\olm))$. Thus we get  the inclusion 
$$
N_{K_\Th(\ox, -\nabla_x L(\ox,\olm))}\big(\frac{x_k-\ox}{t_k}\big)\subset N_{K_\Th(\ox, -\nabla_x L(\ox,\olm))}(w),
$$
which in turn results in 
\begin{equation}\label{pe1}
q_k+\nabla_x L(\ox,\olm)=v_k-\big(\nabla_x L(x_k,\lm_k) -\nabla_x L(\ox,\olm)\big)\subset N_{K_\Th(\ox, -\nabla_x L(\ox,\olm))}(w)
\end{equation}
for all $k$ sufficiently large.  It follows from the relationships  $P_{\D}=(I+N_\D)^{-1}$ and    $P_\D(v_{k})=o( t_k)$ that $v_k+o(t_k)\in N_\D\big(o(t_k)\big)$.
Since $\D$ is a convex cone, the latter   yields $v_k+o(t_k)\in \D^*$. So by  the definition of the Lagrangian $L$, we obtain 
\begin{eqnarray}
\D^*\ni v_k+o(t_k)&=& \nabla_x L(x_k,\lm_k)-\nabla_x L(\ox,\olm)+\nabla_x L(\ox,\olm)+ q_k+o(t_k)\nonumber\\
& =& \nabla_x L(x_k,\olm)-\nabla_x L(\ox,\olm)+\nabla \Phi(x_k)^*(\lm_k-\olm)+\nabla_x L(\ox,\olm)+ q_k+o(t_k) \nonumber\\
&=&\nabla^2_{xx} L(\ox,\olm)(x_k-\ox)+\nabla \Phi(\ox)^*(\lm_k-\olm)+\nabla_x L(\ox,\olm)+ q_k+o(t_k)\label{pe3}.
\end{eqnarray}
To deal with the second term in \eqref{pe3},  we utilize again $(x_k,\lm_k)\in S(v_{k},p_{k})$ and $(\ox,\olm)\in S(0,0)$ to conclude via \eqref{maps}, respectively, that 
$$
\lm_k\in \sub g(z_k)\quad \mbox{and}\quad \olm\in \sub g(\oz)\quad \mbox{with}\;\; z_k:= \Phi(x_k)+p_k,\;\oz:= \Phi(\ox).
$$
Using the established reduction lemma for CPLQ functions in Theorem~\ref{rlcp} tells us that for all $k$ sufficiently large we have 
\begin{equation}\label{pe5}
\frac{\lm_k-\olm}{t_k}\in D(\sub g)(\oz,\olm)\big(\frac{z_k-\oz}{t_k}\big).
\end{equation}
This, in particular, indicates that ${(z_k-\oz)}{/t_k}\in \dom D(\sub g)(\oz,\olm)=K_g(\oz,\olm)$, where the last equality comes from Proposition~\ref{sop}(c).
Since $K_g(\oz,\olm)$ is a polyhedral convex set and since  $p_{k}=o(t_k)$ and $(z_k-\oz)/t_k\to \nabla \Phi(\ox)w$ as $k\to \infty$, we arrive at  $\nabla \Phi(\ox)w\in K_g(\oz,\olm)=\dom D(\sub g)(\oz,\olm)$.
This together with $w\in K_\Th(\ox, -\nabla_x L(\ox,\olm))$ tells us that $w\in \D$. By this and  \eqref{pe3}, we get 
\begin{equation}\label{pe4}
\la \nabla^2_{xx} L(\ox,\olm)(x_k-\ox),w\ra +\la \lm_k-\olm, \nabla \Phi(\ox)w\ra +\la \nabla_x L(\ox,\olm)+ q_k,w\ra +o(t_k) \le 0
\end{equation}
for all $k$ sufficiently large. Appealing also to  the outer Lipschitzian property of the proto-derivative $D(\sub g)(\oz,\olm)$, obtained in Proposition~\ref{ouli}(b),  and to the fact that $\nabla \Phi(\ox)w\in K_g(\oz,\olm)$ confirms the existence of a constant 
$\ell\ge 0$ such that for all $k$ sufficiently large the inclusion 
\begin{equation}\label{pe6}
D(\sub g)(\oz,\olm)\big(\frac{z_k-\oz}{t_k}\big)\subset D(\sub g)(\oz,\olm)(\nabla  \Phi(\ox)w)+\ell\big\|\frac{z_k-\oz}{t_k}-\nabla \Phi(\ox)w\big\|\B
\end{equation}
 holds. This inclusion and \eqref{pe5} tell us that there are $u_k\in D(\sub g)(\oz,\olm)(\nabla \Phi(\ox)w)$ and  $b_k\in \B$ such that 
 $$
 \frac{\lm_k-\olm}{t_k}=u_k+ \ell\big\|\frac{z_k-\oz}{t_k}-\nabla \Phi(\ox)w\big\| b_k.
 $$
Similar to \eqref{ssr}, we can conclude from $u_k\in D(\sub g)(\oz,\olm)(\nabla \Phi(\ox)w)$ that 
$$
\la u_k,\nabla \Phi(\ox)w\ra= \d^2g(\oz,\olm)(\nabla \Phi(\ox)w).
$$
Moreover, by \eqref{pe1}, we have $\la \nabla_x L(\ox,\olm)+ q_k,w\ra=0$ since $K_\Th(\ox, -\nabla_x L(\ox,\olm))$ is a convex cone.
Dividing both sides of \eqref{pe4} by $t_k$ and using these facts bring us to 
$$
\la \nabla^2_{xx} L(\ox,\olm)(\frac{x_k-\ox}{t_k}),w\ra + \d^2g(\oz,\olm)(\nabla \Phi(\ox)w)+  \ell\big\|\frac{z_k-\oz}{t_k}-\nabla \Phi(\ox)w\big\| \la b_k, \nabla \Phi(\ox)w\ra   +\frac{o(t_k)}{t_k} \le 0
$$ 
for all $k$ sufficiently large. Since the sequence $\{b_k\b$ is bounded, passing to a subsequence of $\{b_k\b$  if necessary and then letting $k\to \infty$ imply that 
$$
\la \nabla^2_{xx} L(\ox,\olm)w,,w\ra + \d^2g(\oz,\olm)(\nabla \Phi(\ox)w)\le 0,
$$
where $w\in \D\setminus\{0\}$. This  clearly contradicts  the second-order sufficient condition \eqref{sosc} and hence completes the proof. 
\end{proof} 
Note that Theorem~\ref{prim1} extends a similar result  in \cite[Theorem~2.3]{fis12}, which was established for NLPs, for the composite problem \eqref{comp}.
It is worth mentioning that the proof of the latter result did not appeal to the reduction lemma and utilizes the particular geometry  of constraints in NLPs. 
We now look into the possibility whether the second-order sufficient condition \eqref{sosc} can be replaced with  the noncriticality, which is strictly weaker than \eqref{sosc}.
It was observed in \cite[Example~2.1]{fis12} that the latter can not be achieved even for nonlinear programming problems, which can be covered by the composite problem \eqref{comp}.
It is, however, observed in \cite[Theorem~2.2]{fis12} that  for nonlinear programs such a replacement can be accomplished if   the convex cone $\D$ from \eqref{coned} is enlarged.
Below we show that this is achievable for the composite problem \eqref{comp} if we replace the convex cone $\D$ by the the linear subspace 
\begin{equation}\label{cones}
 \D_+:=\Big\{w\in K_\Th\big(\ox,-\nabla_x L(\ox,\olm) \big)-K_\Th \big(\ox,-\nabla_x L(\ox,\olm) \big)\Big|\; \nabla \Phi(\ox)w\in K_g(\Phi(\ox),\olm)-K_g(\Phi(\ox),\olm)\Big\},
 \end{equation}
which clearly contains $\D$. 
\begin{Theorem}[primal estimates via noncriticality]\label{prim2} Assume that $(\ox,\olm)$ is a solution to the KKT system \eqref{vs} and that 
$\olm$ is a critical Lagrange multiplier for \eqref{vs}. Then there are neighborhoods $U$ of $(0,0)\in \R^n\times \R^m$ and $V$ of $(\ox,\olm)$ and a constant $\kappa\ge 0$ such that 
for any $(v,p)\in U$ and any  $(x,\lm)\in  S(v,p)\cap V$ the estimate 
 \begin{equation}\label{semi3}
\|x-\ox\| \le \kappa\big(\|P_{\D_+}(v)\|+\|p\|\big)
 \end{equation}
 holds.
 \end{Theorem}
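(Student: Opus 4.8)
First a word on the hypothesis: the statement should read ``$\olm$ is a \emph{noncritical} Lagrange multiplier for \eqref{vs}'', since this is what the argument below uses and what makes the result a genuine weakening of the assumption of Theorem~\ref{prim1} (recall that \eqref{sosc} implies noncriticality by Proposition~\ref{nocs}). The plan is to argue by contradiction along the lines of the proof of Theorem~\ref{prim1} (and of the implication (a)$\Longrightarrow$(b) in Theorem~\ref{chcri}), but to replace the terminal comparison with the second-order sufficient condition by a direct verification of the noncriticality inclusion of Corollary~\ref{charc}(b). So suppose \eqref{semi3} fails for every choice of $U$, $V$, $\kappa$. Then there are $(v_k,p_k)\to(0,0)$ and $(x_k,\lm_k)\in S(v_k,p_k)$ with $(x_k,\lm_k)\to(\ox,\olm)$ and $\|x_k-\ox\|/(\|P_{\D_+}(v_k)\|+\|p_k\|)\to\infty$. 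Put $t_k:=\|x_k-\ox\|$, so that $P_{\D_+}(v_k)=o(t_k)$ and $p_k=o(t_k)$, and pass to a subsequence with $(x_k-\ox)/t_k\to w$ for some $0\ne w\in\R^n$. From $(x_k,\lm_k)\in S(v_k,p_k)$ and $(\ox,\olm)\in S(0,0)$ we read off, via \eqref{maps}, that $q_k:=v_k-\nabla_x L(x_k,\lm_k)\in N_\Th(x_k)$ and $\lm_k\in\sub g(z_k)$ with $z_k:=\Phi(x_k)+p_k$, and that $-\nabla_x L(\ox,\olm)\in N_\Th(\ox)$ and $\olm\in\sub g(\oz)$ with $\oz:=\Phi(\ox)$.

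Next I would apply the two reduction lemmas exactly as in Theorem~\ref{prim1}. The reduction lemma for polyhedral convex sets applied to $N_\Th$ gives, for large $k$, $\bar q_k:=v_k-\big(\nabla_x L(x_k,\lm_k)-\nabla_x L(\ox,\olm)\big)\in N_{K_\Th(\ox,-\nabla_x L(\ox,\olm))}(x_k-\ox)$; since $K_\Th(\ox,-\nabla_x L(\ox,\olm))$ is a polyhedral cone and $(x_k-\ox)/t_k$ lies in it and converges to $w$, we get $w\in K_\Th(\ox,-\nabla_x L(\ox,\olm))$ and $\bar q_k\in N_{K_\Th(\ox,-\nabla_x L(\ox,\olm))}(w)$ for large $k$. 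The reduction lemma for CPLQ functions (Theorem~\ref{rlcp}) applied to $\sub g$ gives $(\lm_k-\olm)/t_k\in D(\sub g)(\oz,\olm)\big((z_k-\oz)/t_k\big)$; since $p_k=o(t_k)$ and $\Phi$ is differentiable, $(z_k-\oz)/t_k\to\nabla\Phi(\ox)w$, so Proposition~\ref{sop}(c) yields $\nabla\Phi(\ox)w\in K_g(\oz,\olm)$, and the outer Lipschitzian property of the proto-derivative (Proposition~\ref{ouli}(b)) lets us write $(\lm_k-\olm)/t_k=u_k+\ve_k b_k$ with $u_k\in D(\sub g)(\oz,\olm)(\nabla\Phi(\ox)w)$, $b_k\in\B$ and $\ve_k\to0$. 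In particular $w\in\D_+$ by \eqref{cones}. Substituting the expansion $\nabla_x L(x_k,\lm_k)-\nabla_x L(\ox,\olm)=\nabla^2_{xx}L(\ox,\olm)(x_k-\ox)+\nabla\Phi(\ox)^*(\lm_k-\olm)+o(t_k)$ into $\bar q_k$, dividing by $t_k$, inserting the decomposition of $(\lm_k-\olm)/t_k$, and absorbing all terms that tend to $0$, I would arrive at
\begin{equation*}
\frac{v_k}{t_k}-\nabla^2_{xx}L(\ox,\olm)w+o(1)\ \in\ \Sigma:=N_{K_\Th(\ox,-\nabla_x L(\ox,\olm))}(w)+\nabla\Phi(\ox)^*D(\sub g)(\oz,\olm)(\nabla\Phi(\ox)w),
\end{equation*}
where, by the formula \eqref{gdpw} and the polyhedrality of the critical cones, $\Sigma$ is a fixed polyhedral, hence closed, convex set. (Note that, in contrast with Theorem~\ref{prim1}, no scalar pairing with $w$ is used here, which is why the possible unboundedness of $(\lm_k-\olm)/t_k$ and of $u_k$ is harmless.)

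The essential new point, and the step I expect to be the main obstacle, is the removal of the uncontrolled term $v_k/t_k$ by exploiting that $\D_+$ is a \emph{linear subspace}. Since $P_{\D_+}(v_k)=o(t_k)$, we have $v_k/t_k=\zeta_k+o(1)$ with $\zeta_k:=(I-P_{\D_+})(v_k)/t_k\in\D_+^{\perp}$, so the displayed inclusion becomes $-\nabla^2_{xx}L(\ox,\olm)w+o(1)\in\Sigma-\zeta_k\subset\Sigma+\D_+^{\perp}$. I would then prove the absorption identity $\Sigma+\D_+^{\perp}=\Sigma$. From \eqref{cones}, $\D_+=\big(\span K_\Th(\ox,-\nabla_x L(\ox,\olm))\big)\cap\nabla\Phi(\ox)^{-1}\big(\span K_g(\oz,\olm)\big)$, and hence $\D_+^{\perp}=\big(\span K_\Th(\ox,-\nabla_x L(\ox,\olm))\big)^{\perp}+\nabla\Phi(\ox)^*\big(\span K_g(\oz,\olm)\big)^{\perp}$. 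Now use the elementary fact that for a convex cone $C$ and $c\in C$ one has $(\span C)^{\perp}\subset N_C(c)$, together with the fact that $N_C(c)$ is a convex cone, so that $N_C(c)+(\span C)^{\perp}=N_C(c)$. Applying this to $C=K_\Th(\ox,-\nabla_x L(\ox,\olm))$ at $c=w$ removes the first summand of $\D_+^{\perp}$; applying it to each piece $K_{C_i}(\oz,\olm_i)$ at $c=\nabla\Phi(\ox)w$ in the intersection formula \eqref{gdpw} — noting that $\big(\span K_g(\oz,\olm)\big)^{\perp}\subset\big(\span K_{C_i}(\oz,\olm_i)\big)^{\perp}$ because $K_g(\oz,\olm)$ is the union of the $K_{C_i}(\oz,\olm_i)$ — gives $D(\sub g)(\oz,\olm)(\nabla\Phi(\ox)w)+\big(\span K_g(\oz,\olm)\big)^{\perp}=D(\sub g)(\oz,\olm)(\nabla\Phi(\ox)w)$, which removes the second summand. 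Thus $\Sigma+\D_+^{\perp}=\Sigma$, and closedness of $\Sigma$ yields $-\nabla^2_{xx}L(\ox,\olm)w\in\Sigma$; equivalently, there is $u\in D(\sub g)(\oz,\olm)(\nabla\Phi(\ox)w)$ with $0\in\nabla^2_{xx}L(\ox,\olm)w+\nabla\Phi(\ox)^*u+N_{K_\Th(\ox,-\nabla_x L(\ox,\olm))}(w)$. Since $w\ne0$, this contradicts the noncriticality of $\olm$ via Corollary~\ref{charc}(b), which establishes the estimate \eqref{semi3} and completes the proof.
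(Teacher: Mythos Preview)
Your proposal is correct and follows essentially the same route as the paper's proof: the same contradiction setup, the same use of the two reduction lemmas and the outer Lipschitzian property, the same computation of $\D_+^{\perp}$, and the same absorption of $\D_+^{\perp}$ into $\Sigma$ (your absorption identity $\Sigma+\D_+^{\perp}=\Sigma$ is precisely the content of the paper's Claims~I and~II, just packaged via the observation $(\span C)^{\perp}=C^*\cap(-C^*)$). Your remark that the hypothesis should read ``noncritical'' is also correct; the paper's statement contains a typo, and both proofs indeed use noncriticality.
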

 \begin{proof} We can proceed as the proof of Theorem~\ref{prim1} with some small adjustments to get \eqref{pe1}.
 Indeed, assume by contradiction that the claimed estimate  fails. Thus  for any $k\in \N$, there 
are   $(v_{k},p_{k})\in\B_{1/k}(0,0)$ and $(x_k,\lm_k)\in S(v_{k},p_{k})\cap\B_{1/k}(\ox,\olm)$ satisfying
$$
\frac{\|x_k-\ox\|}{\|P_{\D_+}(v_{k})\|+\|p_{k}\|}\to\infty\;\mbox{ as }\;k\to\infty.
$$
Set $t_k:=\|x_k-\ox\|$ and hence obtain  $P_{\D_+}(v_{k})=o( t_k)$ and $p_{k}=o(t_k)$. By passing to a subsequence if necessary, we can assume that 
\begin{equation*}\label{xi}
\frac{x_k-\ox}{t_k}\to w\;\mbox{ as }\;k\to\infty\;\mbox{ with some }\;0\ne w\in\R^n.
\end{equation*}
Since $(x_k,\lm_k)\in S(v_{k},p_{k})$ and $(\ox,\olm)\in S(0,0)$, we conclude from \eqref{maps} that 
$$
v_k-\nabla_x L(x_k,\lm_k)=q_k\;\;\mbox{with}\;\;q_k\in N_\Th(x_k)\quad \mbox{and}\quad -\nabla_x L(\ox,\olm)\in N_\Th(\ox).
$$
It follows from these and  the reduction lemma for a 
polyhedral convex set from\eqref{rl1} that for all $k$ sufficiently large we have 
$$
v_k-\big(\nabla_x L(x_k,\lm_k) -\nabla_x L(\ox,\olm)\big)\in N_{K_\Th(\ox, -\nabla_x L(\ox,\olm))}(x_k-\ox).
$$
This tells us that $(x_k-\ox)/t_k\in K_\Th(\ox, -\nabla_x L(\ox,\olm))$ and thus $w\in K_\Th(\ox, -\nabla_x L(\ox,\olm))$. Since   
 $\Th$  is a polyhedral convex set, so is the critical cone $K_\Th(\ox, -\nabla_x L(\ox,\olm))$. Thus we get  the inclusion 
$$
N_{K_\Th(\ox, -\nabla_x L(\ox,\olm))}\big(\frac{x_k-\ox}{t_k}\big)\subset N_{K_\Th(\ox, -\nabla_x L(\ox,\olm))}(w),
$$
which in turn results in \eqref{pe1}. 
  It follows from the relationships  $P_{\D_+}=(I+N_{\D_+})^{-1}$ and    $P_{\D_+}(v_{k})=o( t_k)$ that $v_k+o(t_k)\in N_{\D_+}\big(o(t_k)\big)$,
  which yields  $v_k+o(t_k)\in \D_+^\bot$.
 Since $\D_+$ is a linear subspace, we get  $-v_k+o(t_k)\in \D_+^\bot$. This, combined with \eqref{pe1}, brings us to 
$$
o(t_k)-\big(\nabla_x L(x_k,\lm_k) -\nabla_x L(\ox,\olm)\big)\subset N_{K_\Th(\ox, -\nabla_x L(\ox,\olm))}(w)+\D_+^\bot.
$$
 Since we have 
\begin{eqnarray*}
  \nabla_x L(x_k,\lm_k)-\nabla_x L(\ox,\olm)& =& \nabla_x L(x_k,\olm)-\nabla_x L(\ox,\olm)+\nabla \Phi(x_k)^*(\lm_k-\olm)\nonumber\\
&=&\nabla^2_{xx} L(\ox,\olm)(x_k-\ox)+\nabla \Phi(\ox)^*(\lm_k-\olm) +o(t_k),
\end{eqnarray*}
we obtain 
\begin{equation*} 
\frac{o(t_k)}{t_k}-\nabla^2_{xx} L(\ox,\olm)(\frac{x_k-\ox}{t_k})-\nabla \Phi(\ox)^*(\frac{\lm_k-\olm}{t_k})\subset N_{K_\Th(\ox, -\nabla_x L(\ox,\olm))}(w)+\D_+^\bot.
\end{equation*}
This together with \eqref{pe5} and \eqref{pe6} ensures the existence of a sequence $\{b_k\b$ in $\B$ so that 
\begin{eqnarray*}
 \frac{o( t_k)}{t_k}-\nabla^2_{xx} L(\ox,\olm)\big(\frac{x_k-\ox}{t_k}\big) -\ell\big\|\frac{z_k-\oz}{t_k}-\nabla \Phi(\ox)w\big\| \nabla \Phi(\ox)^*b_k\nonumber\\
 \in N_{K_\Th(\ox, - \nabla_x L(\ox,\olm))}(w)+ \nabla \Phi(\ox)^* D(\sub g)(\oz,\olm)(\nabla \Phi(\ox)w)+\D_+^\bot.
 \end{eqnarray*}
 Since the sets on the right-hand side of this inclusion are polyhedral, their sum is a closed set. Thus,
 passing to the limit in the above inclusion tells us that 
\begin{equation} \label{pe7}
 0\in \nabla^2_{xx} L(\ox,\olm)w + N_{K_\Th(\ox, - \nabla_x L(\ox,\olm))}(w)+ \nabla \Phi(\ox)^* D(\sub g)(\oz,\olm)(\nabla \Phi(\ox)w)+\D_+^\bot.
\end{equation}
Similar to the proof of Theorem~\ref{prim1} (see the line before \eqref{pe4}), we can show that $w\in \D$, where $\D$ comes from \eqref{coned}, which leads us to 
\begin{equation}\label{pe8}
w\in K_\Th\big(\ox,-\nabla_x L(\ox,\olm) \big)\quad \mbox{and}\quad \nabla \Phi(\ox)w\in K_g(\Phi(\ox),\olm).
\end{equation}
 Observe also by \cite[Corollary~11.25(d)]{rw} that 
\begin{eqnarray}
 \D_+^\bot &=&\Big(K_\Th \big(\ox,-\nabla_x L(\ox,\olm) \big)^*\cap -K_\Th \big(\ox,-\nabla_x L(\ox,\olm) \big)^*\Big)\nonumber \\
 &&+\Big\{\nabla \Phi(\ox)^*u\Big|\;   u\in K_g(\Phi(\ox),\olm)^*\cap -K_g(\Phi(\ox),\olm)^*\Big\} \label{pe9}.
 \end{eqnarray}
We   proceed by justifying two claims:\\

{\bf Claim I.} {\em The following inclusion holds: $$\Big(K_\Th \big(\ox,-\nabla_x L(\ox,\olm) \big)^*\cap -K_\Th \big(\ox,-\nabla_x L(\ox,\olm) \big)^*  \Big)\subset N_{K_\Th(\ox, - \nabla_x L(\ox,\olm))}(w).$$}
 To prove this claim,  pick $ u\in K_\Th \big(\ox,-\nabla_x L(\ox,\olm) \big)^*\cap -K_\Th \big(\ox,-\nabla_x L(\ox,\olm) \big)^*$. It follows from the  first inclusion in  \eqref{pe8} that $\la  u,w\ra =0$. This 
 along with the fact that $K_\Th \big(\ox, - \nabla_x L(\ox,\olm) \big)$ is a convex cone  implies   via \cite[Proposition~2A.3]{dr} that 
 $$
 u \in N_{K_\Th(\ox, - \nabla_x L(\ox,\olm))}(w),
 $$
and hence proves the claimed inclusion.

{\bf Claim II.} {\em For any $\eta \in D(\sub g)(\oz,\olm)(\nabla \Phi(\ox)w)$ and any $ u\in K_g(\Phi(\ox),\olm)^*\cap -K_g(\Phi(\ox),\olm)^*$, we have 
$$ \nabla \Phi(\ox)^*( \eta +   u) \subset  \nabla \Phi(\ox)^* D(\sub g)(\oz,\olm)(\nabla \Phi(\ox)w).$$}
To verify this inclusion, let $\eta \in D(\sub g)(\oz,\olm)(\nabla \Phi(\ox)w)$ and $u\in K_g(\Phi(\ox),\olm)^*\cap -K_g(\Phi(\ox),\olm)^*$.
We conclude from   the second inclusion in  \eqref{pe8} that  $\la u,  \nabla \Phi(\ox)w\ra=0$.
It follows from $u\in K_g(\Phi(\ox),\olm)^* $ and Proposition~\ref{sop}(a) that $u\in K_{C_i}(\Phi(\ox),\olm_i)^*$ for all $i\in I(\Phi(\ox))$, where $ K_{C_i}(\Phi(\ox),\olm_i)$ is taken from \eqref{cc2}.
Combining these implies that 
$$
u\in N_{K_{C_i}(\Phi(\ox),\olm_i)}(\nabla \Phi(\ox)w)\quad \mbox{for all}\;\; i\in I(\Phi(\ox)).
$$
Moreover, we conclude from $\eta \in D(\sub g)(\oz,\olm)(\nabla \Phi(\ox)w)$  and Proposition~\ref{sop}(c) that 
$$
\eta-A_i(\nabla \Phi(\ox)w)\in N_{K_{C_i}(\Phi(\ox),\olm_i)}(\nabla \Phi(\ox)w)\quad \mbox{for all}\;\; i\in {\mathfrak J}(\nabla \Phi(\ox)w).
$$
 These inclusions as well as ${\mathfrak J}(\nabla \Phi(\ox)w)\subset I(\Phi(\ox))$ result in 
 $$
 \eta+u-A_i(\nabla \Phi(\ox)w)\in N_{K_{C_i}(\Phi(\ox),\olm)}(\nabla \Phi(\ox)w)\quad \mbox{for all}\;\; i\in {\mathfrak J}(\nabla \Phi(\ox)w),
 $$  
implying that $\eta+u\in  D(\sub g)(\oz,\olm)(\nabla \Phi(\ox)w)$ by Proposition~\ref{sop}(c). This  justifies Claim II.

Using Claims I and II together with \eqref{pe7} and \eqref{pe9}, we arrive at the inclusion 
$$
 0\in \nabla^2_{xx} L(\ox,\olm)w + N_{K_\Th(\ox, - \nabla_x L(\ox,\olm))}(w)+ \nabla \Phi(\ox)^* D(\sub g)(\oz,\olm)(\nabla \Phi(\ox)w),
$$
a contradiction with  $\olm$ being a noncritical multiplier for \eqref{vs} since $w\neq 0$. This ends the proof.
 \end{proof}
 
 As pointed out in \cite[page~3322]{fis12}, the right-hand sides of the estimates \eqref{semi2} and \eqref{semi3} involve the sets $\D$ and $\D_+$, respectively, which are 
 defined at the unknown solution $(\ox,\olm)$, and so are not computable if we want to use them in algorithms. The purpose of establishing such estimates 
 is   only for the local convergence analysis of the quasi-Newton SQP method for \eqref{comp}.
 
\section{Primal-Dual Superlinear Convergence of  SQP Methods}\sce  \label{sect05} 

This section is devoted to the local convergence analysis of the basic SQP method for the composite optimization problem \eqref{comp}. 
To this end, we are going to apply \cite[Theorem~3.2]{is14} in which the  superlinear convergence of the Newton method  was established for generalized equations under two assumptions: 1) semistability 
and 2) hemistability; see \cite[page~140]{is14} for more detail. Since  the KKT system \eqref{vs} can be equivalently formulated as the generalized equation 
\begin{equation}\label{ge2}
 \begin{bmatrix}
0\\
0
\end{bmatrix}
 \in\begin{bmatrix}
\nabla_x L(x,\lm)\\
-\Phi(x)
\end{bmatrix}+\begin{bmatrix}
N_\Th(x)\\
(\sub g)^{-1}(\lm)
\end{bmatrix},
\end{equation}
we should find conditions that ensure the validity of the latter assumptions for the generalized equation \eqref{ge2}. The semistability of \eqref{ge2} (cf. \cite[Definition~1.29]{is14}) amounts to 
the isolated calmness of the solution mapping $S$ from \eqref{maps}, which by Theorem~\ref{smrso} can be ensured under the second-order sufficient condition \eqref{sosc} and the uniqueness of Lagrange multipliers.
To analyze the second assumption, we first recall its definition, adapted for \eqref{ge2}: A solution $(\ox,\olm)$  to the generalized equation \eqref{ge2} is called {\em hemistable} if for any $(u,\mu)\in \R^n\times \R^m$ sufficiently close to 
$(\ox,\olm)$, the generalized equation 
\begin{equation}\label{kktsub4}
 \begin{bmatrix}
0\\
0
\end{bmatrix}
  \in 
 \begin{bmatrix}
\nabla_{x}L(u,\mu)\\
-\Phi(u)
\end{bmatrix}
 +
 \begin{bmatrix}
 \nabla_{xx}^2L(u,\mu)& \nabla \Phi(u)^* \\
-\nabla \Phi(u)&0
\end{bmatrix}
 \begin{bmatrix}
x-  u\\
\lm-\mu
\end{bmatrix}
+\begin{bmatrix}
N_\Th(x)\\
(\sub g)^{-1}(\mu)
\end{bmatrix},
\end{equation}
has a solution $(x,\lm)$ that converges to  $(\ox,\olm)$ as $(u,\mu)\to (\ox,\olm)$. It is not hard to see that \eqref{kktsub4} is, indeed, the KKT system of the subproblem \eqref{subs1}
with $(x_k,\lm_k):=(u,\mu)$ and $H_k$ taken from \eqref{hk}. We are going to show that the hemistability of the solution $(\ox,\olm)$ to \eqref{vs} can be also ensured by  the second-order sufficient condition \eqref{sosc} and the uniqueness of Lagrange multipliers.
To this end, consider a parameter space $\R^d$,  the functions $f:\R^n\times \R^d\to \R$, and $\Psi:\R^n\times \R^d\to \R^m$   that 
  are   continuously differentiable. Define now the parametrized composite problem   
\begin{equation}\label{coop2}
\mbox{minimize}\;\;\;f(x,p)+g(\Psi(x,p))\quad\mbox{subject to  }\;\;x\in \Th,
\end{equation}
where $g$ and $\Th$ are taken from \eqref{comp}, namely $g:\R^m\to \oR$ is  CPLQ and $\Th$ is a polyhedral convex set in $\R^n$.
The following result is an extension of \cite[Theorem~1.21]{is14}, which was established for NLPs. While the proof 
uses a similar argument, it requires some small adjustments for the composite problem \eqref{comp}. So we provide a proof for the readers' convenience. 
Note that while constraint qualifications are often utilized for the subdifferetial calculus in variational analysis, the imposed constraint qualification in the following result is to ensure the Aubin property of the constraint mapping \eqref{conm}. 
\begin{Proposition}[existence of local minimizers of parametrized problems] \label{los}
Let $\op\in \R^d$ and $\ox\in \Th$,  let  $\ox$ be a strict local  minimizer of \eqref{coop2} for $p=\op$, and let 
the basic constraint qualification 
 \begin{equation}\label{rcq2}
 -\nabla_x \Psi(\ox,\op)^* u\in N_\Th(\ox), \;\; u\in N_{\ss\dom g}(\Psi(\ox,\op))\implies u=0
 \end{equation}
hold. Then for any $p\in \R^d$ sufficiently close to $\op$, the problem \eqref{coop2} admits a local minimizer $x_p$   such that $x_p\to \ox$  as $p\to \op$.
\end{Proposition}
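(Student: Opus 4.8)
The plan is to run the classical Weierstrass-plus-compactness scheme used for NLPs, adjusted for the CPLQ term. First I would use that $\ox$ is a \emph{strict} local minimizer of \eqref{coop2} for $p=\op$ to fix $\ve>0$ for which $\ox$ is the unique minimizer of $x\mapsto f(x,\op)+g(\Psi(x,\op))$ over $\B_\ve(\ox)\cap\Th$; denote by $\mu_0:=f(\ox,\op)+g(\Psi(\ox,\op))$ its value, so in particular $\oz:=\Psi(\ox,\op)\in\dom g$. Shrinking $\ve$ as in the proof of Theorem~\ref{rlcp}, I would also arrange that $I(z)\subset I(\oz)$ for all $z\in\B_\ve(\oz)$, where $I(\cdot)$ is the active-index set \eqref{act} of the representation \eqref{PWLQ} of $g$. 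Next, the basic constraint qualification \eqref{rcq2} is exactly what makes the constraint mapping \eqref{conm} enjoy the Aubin property at the reference point; from this I would extract, for every $p$ in a neighborhood of $\op$, a point $x(p)\in\Th$ with $\Psi(x(p),p)\in\dom g$ and $x(p)\to\ox$ as $p\to\op$. Consequently, for $p$ close enough to $\op$, the auxiliary problem of minimizing $f(x,p)+g(\Psi(x,p))$ over $\B_\ve(\ox)\cap\Th$ has finite optimal value, bounded above by $f(x(p),p)+g(\Psi(x(p),p))$.

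Then I would invoke the Weierstrass theorem: $\B_\ve(\ox)\cap\Th$ is compact (closed ball intersected with a closed polyhedron), and $x\mapsto f(x,p)+g(\Psi(x,p))$ is lower semicontinuous (the l.s.c. convex function $g$ composed with the continuous map $\Psi(\cdot,p)$, plus the continuous $f$), so the auxiliary problem attains its infimum at some $x_p\in\B_\ve(\ox)\cap\Th$. To prove $x_p\to\ox$ as $p\to\op$, I would argue by subsequences: given $p_k\to\op$, compactness yields, along a subsequence, $x_{p_k}\to\tilde x\in\B_\ve(\ox)\cap\Th$. The one extra ingredient, specific to CPLQ functions, is that $g$ is upper semicontinuous relative to $\dom g$ at $\oz$: for $z\in\B_\ve(\oz)\cap\dom g$ one has $z\in C_i$ for some $i\in I(\oz)$ by the choice of $\ve$, hence $g(z)\le\max_{i\in I(\oz)}\big(\sm\la A_iz,z\ra+\la a_i,z\ra+\al_i\big)$, and this upper bound tends to $\max_{i\in I(\oz)}\big(\sm\la A_i\oz,\oz\ra+\la a_i,\oz\ra+\al_i\big)=g(\oz)$ as $z\to\oz$. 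Applying this along $z=\Psi(x(p_k),p_k)\to\oz$ gives $\limsup_k\big(f(x(p_k),p_k)+g(\Psi(x(p_k),p_k))\big)\le\mu_0$; combined with optimality of $x_{p_k}$ and joint lower semicontinuity of the objective, this yields $f(\tilde x,\op)+g(\Psi(\tilde x,\op))\le\mu_0$ with $\tilde x\in\B_\ve(\ox)\cap\Th$. The uniqueness secured in the first step forces $\tilde x=\ox$, and since every subsequence of $\{x_{p_k}\}$ has a further subsequence converging to $\ox$, the whole family satisfies $x_p\to\ox$.

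Finally, because $x_p\to\ox$, for $p$ sufficiently close to $\op$ the point $x_p$ lies in the interior of $\B_\ve(\ox)$, so $\B_\ve(\ox)\cap\Th$ is a neighborhood of $x_p$ relative to $\Th$; hence $x_p$, minimizing $f(\cdot,p)+g(\Psi(\cdot,p))$ over that set, is a genuine local minimizer of \eqref{coop2}. The step I expect to require the most care is the interplay of the constraint qualification \eqref{rcq2} with the constraint mapping \eqref{conm}: one must verify that \eqref{rcq2} does deliver the Aubin/metric-regularity property producing a feasibility curve $p\mapsto x(p)$ with $x(p)\to\ox$ (here the union structure of $\dom g$ is relevant, since \eqref{rcq2} is the Robinson condition for each of the finitely many active pieces $C_i$). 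Everything else reduces to a routine Weierstrass/subsequence argument, together with the mild observation recorded above that $g$ is continuous relative to its domain at the reference point.
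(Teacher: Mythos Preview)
Your proposal is correct and follows essentially the same route as the paper's proof: fix $\ve$ from strict local optimality, use the basic constraint qualification \eqref{rcq2} to obtain the Aubin property of the constraint mapping $\Gamma$ from \eqref{conm} (the paper cites \cite[Example~9.51]{rw} for this), extract feasible points $x(p)\to\ox$, apply Weierstrass on $\B_\ve(\ox)\cap\Th$, and then pass to the limit along subsequences using continuity of $g$ relative to its domain to force the limit to be $\ox$. Your explicit lsc/usc decomposition and the closing remark that $x_p$ eventually lies in the interior of $\B_\ve(\ox)$ are details the paper leaves implicit; conversely, the paper handles your main concern---that \eqref{rcq2} really yields the Aubin property---by a direct reference, noting that since $g$ is convex its domain $\dom g$ is itself a single polyhedral convex set, so no piecewise argument over the $C_i$ is needed.
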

\begin{proof} By assumptions, we can find a constant $\ve>0$ such that $\ox$ is the strict minimizer of the problem 
\begin{equation}\label{pro0}
\mbox{minimize}\;\;\;f(x,\op)+g(\Psi(x,\op))\quad\mbox{subject to  }\;\;x\in \Th\cap   \B_\ve(\ox).
\end{equation}
Pick a parameter $p\in \R^d$ and define the set-valued mapping $\Gamma:\R^d\tto \R^n$ by 
\begin{equation}\label{conm}
\Gamma(p):=\big\{x\in \Th\big|\; \Psi(x,p)\in \dom g\big\}.
\end{equation}
According to \cite[Example~9.51]{rw}, the mapping $\Gamma$ enjoys the Aubin property around $(\op,\ox)\in \gph \Gamma$, meaning that there exist
neighborhoods $U$ of $\op$ and $V$ of $\ox$ and a constant $\ell\ge 0$ for which we have 
\begin{equation}\label{aubin}
\Gamma(p)\cap V\subset \Gamma(p')+\ell\|p-p'\|\B \quad\mbox{for all}\; p,p'\in U.
\end{equation}
Shrinking the neighborhoods $U$ and $V$ if necessary, we conclude from \eqref{aubin} that $\Gamma(p)\cap \B_\ve(\ox)\neq \emptyset$ for all $p\in U$. Pick  $p\in U$ and consider  the problem 
\begin{equation}\label{pro1}
\mbox{minimize}\;\;\;f(x,p)+g(\Psi(x,p))\quad\mbox{subject to  }\;\;x\in \Th\cap   \B_\ve(\ox).
\end{equation}
Since  $\Gamma(p)\cap \B_\ve(\ox)\neq \emptyset$, the classical Weierstrass theorem implies that problem \eqref{pro1} admits a minimizer $x_p$. 
We claim now that $x_p\to \ox$ as $p\to \op$. Suppose by contradiction that this convergence fails, meaning that there exists a sequence $p_k\to \op$ 
for which the minimizers $x_{p_k}$ of \eqref{pro1} for $p=p_k$ do not converge to $\ox$. Since $x_{p_k}\in  \B_\ve(\ox)$, by passing to a subsequence if necessary, we can assume 
that $x_{p_k}\to u$ for some $u\in \B_\ve(\ox)$ with $u\neq \ox$. It follows from  $\ox\in \Gamma(\op)\cap V$ and  \eqref{aubin} that for any sufficiently large $k$, we can find $y_{p_k}\in \Gamma(p_k)$
such that 
$$
\|y_{p_k}-\ox\|\le \ell\|p_k-\op\|. 
$$
This tells us that $y_{p_k}\to \ox$ as $k\to \infty$ and so $y_{p_k}\in \B_\ve(\ox)$ for all $k$ sufficiently large. Since $x_{p_k}$ is a minimizer of \eqref{pro1}, we get 
$$
f(x_{p_k},p_k)+g(\Psi(x_{p_k},p_k)) \le f(y_{p_k},p_k)+g(\Psi(y_{p_k},p_k)).
$$
Since $g$ is continuous relative to its domain (cf. \cite[Proposition~10.21]{rw}) and since $\Psi(y_{p_k},p_k)\in \dom g$ and $\Psi(x_{p_k},p_k)\in \dom g$, passing to the limit brings us to 
$$
f(u,\op)+g(\Psi(u,\op))\le f(\ox,\op)+g(\Psi(\ox,\op)).
$$
Remember that $u\in \B_\ve(\ox)$ with $u\neq \ox$. This together with the inequality above tells us that $u$ is a minimizer of \eqref{pro0}, a contradiction. This proves the claim that $x_p\to \ox$ as $p\to \op$
and hence completes the proof.
\end{proof}

After this presentation, we are now ready to prove   the hemistability of a solution $(\ox,\olm)$ to \eqref{vs} under the second-order sufficient condition and 
the uniqueness of Lagrange multipliers. 
\begin{Proposition}[solvability of subproblems in the basic SQP method] \label{solva}
Let $(\ox,\olm)$ be a solution to the KKT system \eqref{vs} and let 
the second-order sufficient condition \eqref{sosc} be satisfied at $(\ox, \olm)$ and $\Lambda(\ox)=\{\olm\}$.  Then $(\ox,\olm)$ is a hemistable solution to the KKT system \eqref{vs}.
\end{Proposition}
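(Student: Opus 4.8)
The plan is to recognize the subproblem \eqref{subs1}, with $(x_k,\lm_k)$ replaced by a nearby pair $(u,\mu)$ and $H_k=\nabla_{xx}^2L(u,\mu)$, as an instance of the parametrized composite problem \eqref{coop2}, and then to invoke Proposition~\ref{los}. Concretely, I would take the parameter space ${\rm I\!R}^d:={\rm I\!R}^n\times{\rm I\!R}^m$, set $\op:=(\ox,\olm)$, and define
\[
\begin{aligned}
f\big(x,(u,\mu)\big) &:= \ph(u)+\la\nabla\ph(u),x-u\ra+\sm\la\nabla_{xx}^2L(u,\mu)(x-u),x-u\ra,\\
\Psi\big(x,(u,\mu)\big) &:= \Phi(u)+\nabla\Phi(u)(x-u),
\end{aligned}
\]
which are continuously differentiable jointly in $(x,u,\mu)$. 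With these data \eqref{coop2} for $p=(u,\mu)$ is exactly the subproblem \eqref{subs1}, and \eqref{kktsub4} is precisely its KKT system; hence hemistability of $(\ox,\olm)$ amounts to the existence, for $(u,\mu)$ close to $(\ox,\olm)$, of a KKT pair $(x,\lm)$ of \eqref{coop2} with $(x,\lm)\to(\ox,\olm)$ as $(u,\mu)\to(\ox,\olm)$ — which Proposition~\ref{los}, once multipliers are attached, will furnish provided its hypotheses hold at $p=\op$.

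First I would verify those hypotheses. At $p=\op$ the problem \eqref{coop2} reads $\mini \widetilde f(x)+g(\widetilde\Phi(x))$ subject to $x\in\Th$, where $\widetilde f$ and $\widetilde\Phi$ are the quadratic, respectively affine, models of $\ph$ and $\Phi$ about $\ox$. A direct computation gives $\nabla\widetilde f(\ox)=\nabla\ph(\ox)$, $\nabla^2\widetilde f(\ox)=\nabla_{xx}^2L(\ox,\olm)$, $\widetilde\Phi(\ox)=\Phi(\ox)$ and $\nabla\widetilde\Phi(\ox)=\nabla\Phi(\ox)$, so the Lagrangian of this model problem has $x$-gradient $\nabla_xL(\ox,\olm)$ and $x$-Hessian $\nabla_{xx}^2L(\ox,\olm)$ at $(\ox,\olm)$; consequently $(\ox,\olm)$ is a KKT pair of the model problem, its Lagrange multiplier set at $\ox$ coincides with $\Lm(\ox)=\{\olm\}$, and its critical cone coincides with $\D$ from \eqref{coned}. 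Since $\Lm(\ox)=\{\olm\}$, Proposition~\ref{unic} yields the dual condition \eqref{dual}, and then — exactly as in the derivation of \eqref{pe14} in the proof of Theorem~\ref{smrso} — the basic constraint qualification \eqref{rcq} follows; I note that \eqref{rcq} written for the model problem at $\ox$ and the qualification \eqref{rcq2} at $(\ox,\op)$ both coincide with \eqref{rcq}. Because the multiplier set of the model problem at $\ox$ is the singleton $\{\olm\}$, the maximum in the second-order condition of Proposition~\ref{sooc}(b) for the model problem equals the left-hand side of \eqref{sosc}, which is positive on $\D\setminus\{0\}$ by hypothesis; hence Proposition~\ref{sooc}(b) applies to the model problem and produces the quadratic growth condition at $\ox$, so $\ox$ is a strict local minimizer of \eqref{coop2} at $p=\op$.

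With the hypotheses of Proposition~\ref{los} checked, that proposition yields, for every $(u,\mu)$ sufficiently close to $(\ox,\olm)$, a local minimizer $x_{(u,\mu)}$ of \eqref{subs1} with $x_{(u,\mu)}\to\ox$ as $(u,\mu)\to(\ox,\olm)$. To finish I would attach multipliers and pass to the limit: since the basic constraint qualification is robust (the normal cones to $\Th$ and to $\dom g$ are locally constant along these polyhedral sets and $\nabla\Phi$ is continuous), it holds for \eqref{subs1} at $x_{(u,\mu)}$ for $(u,\mu)$ near $(\ox,\olm)$, so $x_{(u,\mu)}$ admits a Lagrange multiplier $\lm_{(u,\mu)}$, and by the same robustness the family $\{\lm_{(u,\mu)}\}$ stays bounded near $(\ox,\olm)$. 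Then, taking any accumulation point $\lm^*$ of $\lm_{(u,\mu)}$ as $(u,\mu)\to(\ox,\olm)$ and using $x_{(u,\mu)}\to\ox$, the closedness of $\gph N_\Th$ and of $\gph\sub g$, and the continuity of $\nabla\ph$, $\Phi$, $\nabla\Phi$ and $\nabla_{xx}^2L$, I pass to the limit in the two relations of \eqref{kktsub4} to obtain $0\in\nabla_xL(\ox,\lm^*)+N_\Th(\ox)$ and $\lm^*\in\sub g(\Phi(\ox))$, i.e.\ $\lm^*\in\Lm(\ox)=\{\olm\}$; hence $\lm_{(u,\mu)}\to\olm$, and $(x_{(u,\mu)},\lm_{(u,\mu)})$ is a solution of \eqref{kktsub4} converging to $(\ox,\olm)$, which is precisely hemistability. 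The main obstacle I anticipate is the second step — transferring the second-order sufficient condition \eqref{sosc} from \eqref{comp} to the linearized model problem at $\op$ by carefully matching first- and second-order data, multiplier sets and critical cones so that Proposition~\ref{sooc}(b) delivers quadratic growth — together with the verification in the last step that the subproblem multipliers remain bounded.
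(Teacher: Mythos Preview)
Your proposal is correct and follows essentially the same approach as the paper: define the same parametrized $f$ and $\Psi$, transfer the basic constraint qualification and the second-order sufficient condition from \eqref{comp} to the model problem at $\op=(\ox,\olm)$ so that Proposition~\ref{los} applies to produce $x_{(u,\mu)}\to\ox$, then attach multipliers via robustness of the qualification and pass to the limit using $\Lm(\ox)=\{\olm\}$. The paper handles the multiplier boundedness you anticipate by the standard normalize-and-contradict argument against \eqref{rcq2}; one small slip in your write-up is that normal cones to polyhedral sets are outer semicontinuous (indeed $N_\Th(x)\subset N_\Th(\ox)$ for $x$ near $\ox$) rather than locally constant, but this weaker property is already enough for the robustness you invoke.
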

\begin{proof}  For any $p:=(u,\mu)\in \R^n\times \R^m$ and $x\in \R^n$,  define   
the functions $f:\R^n\times \R^n\times \R^m\to \R$ and $\Psi:\R^n\times \R^n\times \R^m\to \R^m$, respectively,  by 
\begin{equation}\label{psph}
\begin{cases}
f(x,p)=\ph(u)+ \la \nabla \ph(u), x- u\ra+ \sm\la \nabla_{xx}^2L( u,\mu)(x-u), x-u \ra,\\
\Psi(x,p)= \Phi(u)+\nabla \Phi(u)(x-u ).
\end{cases}
\end{equation}
So we can view the SQP subproblem \eqref{subs1} with $(x_k,\lm_k):=(u,\mu)$ and $H_k$ taken from \eqref{hk} as the parametrized composite optimization problem 
\begin{equation}\label{comp2}
\mbox{minimize}\;\;\;f(x,p)+g(\Psi(x,p))\quad\mbox{subject to  }\;\;x\in \Th,
\end{equation}
 with $f$ and $\Psi$ defined by \eqref{psph}. Set $\op:=(\ox,\olm)$ and observe that 
\begin{equation}\label{pe12}
\begin{cases}
\nabla_x f(x,\op)=\nabla \ph(\ox)+\nabla_{xx}^2L( \ox,\olm)(x- \ox), \;\; \nabla^2_{xx} f(x,\op)= \nabla_{xx}^2L(\ox,\olm),\\
\Psi(\ox,\op)= \Phi(\ox), \;\; \nabla_x \Psi(x,\op)=\nabla \Phi(\ox).
\end{cases}
\end{equation}
These equalities tell us that  the KKT system of \eqref{comp2} for  $p=\op$ is the generalized equation 
\begin{equation}\label{gekk}
0\in \nabla \ph(\ox) + \nabla_{xx}^2L( \ox,\olm)(x- \ox)+ \nabla \Phi(\ox)^*\lm+N_\Th(x),\;\; \lm\in \sub g\big(\Phi(\ox)+\nabla \Phi(\ox)(x-\ox)\big).
\end{equation}
It is not hard to see that  $(\ox,\olm)$ is a solution to this KKT system, implying  that $\ox$ is  a stationary point for  \eqref{comp2} associated with $p=\op$ and  
that $\olm$ is a Lagrange multiplier associated with $\ox$ for the latter problem. Define the Lagrangian of \eqref{comp2} by $\L(x,p,\lm):=f(x,p)+\la\lm, \Psi(x,p)\ra$ and deduce from \eqref{pe12} that 
\begin{equation}\label{pe15}
\nabla_x\L(\ox,\op,\olm)=\nabla_x L(\ox,\olm)\quad \mbox{and}\quad \nabla^2_{xx}\L(\ox,\op,\olm)=\nabla^2_{xx} L(\ox,\olm).
\end{equation}
To simplify the proof, we are going to break it down into the following steps: 

{\bf Step 1.} {\em The basic constraint qualification \eqref{rcq2} holds for the parametrized   problem \eqref{comp2} at $(\ox,\op)$.}

To prove this claim, we conclude from $\Lambda(\ox)=\{\olm\}$ and Proposition~\ref{unic} that the dual condition \eqref{dual} holds.
This together with  \eqref{pe14} tells us that the basic constraint qualification \eqref{rcq} is satisfied. Appealing now to  \eqref{pe12} gives us \eqref{rcq2}.

{\bf Step 2.} {\em The set of Lagrange multipliers  of \eqref{comp2}   associated with $(\ox,\op)$ is $\{\olm\}$.}

To justify this claim,  we deduce from  \eqref{pe12} and the generalized equation \eqref{gekk}  that 
 the Lagrange multiplier set associated with $(\ox,\op)$ for \eqref{comp2}   coincides with that of the composite problem \eqref{comp}.
Since the latter is $\{\olm\}$, we finish the proof of this step.

{\bf Step 3.} {\em $\ox$ is a strict local minimizers of the parametrized   problem \eqref{comp2}   for $p=\op $.}

To verify this step, we show that the second-order sufficient condition of the type \eqref{sosc} holds for the parametrized problem 
\eqref{comp2} at $\big((\ox,\op),\olm)$. To this end, pick $w\in \R^n $ and  conclude from \eqref{pe15} 
and \eqref{pe12} that 
\begin{equation*}
 \big\la  \nabla^2_{xx}\L(\ox,\op,\olm)w,w\big\ra+\d^2 g\big(\Psi(\ox,\op),\olm\big)\big(\nabla_x \Psi(\ox,\op)w\big)=\big\la\nabla^2_{xx}L(\ox,\olm)w,w\big\ra+\d^2 g\big(\Phi(\bar x),\olm\big)\big(\nabla \Phi(\ox)w\big),
\end{equation*}
and that 
$$
K_\Th\big(\ox,- \nabla_x\L(\ox,\op,\olm)\big)=K_\Th\big(\ox,- \nabla_xL(\ox,\olm)\big)\quad \mbox{and}\quad \;K_g(\Psi(\ox,\op),\olm)=K_g(\Phi(\ox),\olm).
$$
Since the second-order sufficient condition \eqref{sosc} holds  at $(\ox,\olm)$, the above equalities confirm that the  
second-order sufficient condition of the type \eqref{sosc} holds for \eqref{comp2} at $\big((\ox,\op),\olm)$. This along with Step 1  and Proposition~\ref{sooc}(b) 
proves that $\ox$ is a strict local minimizer of \eqref{comp2}   for $p=\op $.

Appealing now to Proposition~\ref{los} and Steps 1 and  3, we conclude that for any $p$ sufficiently close to $\op$ the parametrized problem \eqref{comp2}
admits a local minimizer $x_p$ that $x_p\to \ox$ as $(u,\mu)=p\to \op=(\ox,\olm)$.  

{\bf Step 4.} {\em For any $p$ sufficiently close to $\op$, there exists a Lagrange multiplier $\lm_p$ associated with the local minimizer $x_p$ of the parametrized problem \eqref{comp2} that 
$\lm_p\to \olm$ as $p\to \op$.}

To furnish this step, by  Step 1, we can find a neighborhood of $\op$ such that for any $p$ in this neighborhood the basic constraint qualification 
$$
 -\nabla_x \Psi(x_p,p)^* u\in N_\Th(x_p), \;\; u\in N_{\ss\dom g}(\Psi(x_p,p))\implies u=0
$$
fulfills. This, combined with \cite[Example~10.8]{rw}, ensures the existence of a Lagrange multiplier $\lm_p$ associated with the local minimizer $x_p$ of the parametrized problem \eqref{comp2}.

By Step 1, the  basic constraint qualification \eqref{rcq2} holds for  \eqref{comp2} at $(\ox,\op)$, which tells us that the Lagrange multipliers $\lm_p$ are uniformly bounded whenever 
$p$ is chosen sufficiently close to $\op$. Indeed, if this fails, we find  sequences $\{p_k\b$, converging to $\op$,   and $\{\lm_{p_k}\b$, which is unbounded. 
Recall that $\lm_{p_k}$ is a Lagrange multiplier  associated with the local minimizer $x_{p_k}$ of the parametrized problem \eqref{comp2} with $p=p_k$.
This tells us that for each $k$ we have 
\begin{equation}\label{dc0}
0\in \nabla_x f(x_{p_k},p_k)+\nabla_x \Psi(x_{p_k},p_k)^*\lm_{p_k}+ N_\Th(x_{p_k}),\;\;\lm_{p_k} \in \sub g\big(\Psi(x_{p_k},{p_k})\big).
\end{equation}
Since $\{\lm_{p_k}\b$ is unbounded, we can assume by passing to a subsequence if necessary that $\lm_{p_k}/\|\lm_{p_k}\| \to  \eta$  as $k\to \infty$ for some $\eta\in \R^m\setminus \{0\}$.
Dividing both sides of \eqref{dc0} by $\| \lm_{p_k}\| $ and then passing to the limit, we arrive at
$$
 -\nabla_x \Psi(\ox,\op)^* \eta\in N_\Th(\ox), \;\; \eta\in N_{\ss\dom g}(\Psi(\ox,\op)),
$$
a contradiction with  \eqref{rcq2}  since $\eta\neq 0$. 
Since the set of Lagrange multipliers  of \eqref{comp2} associated with $p=\op$ is $\{\olm\}$ (Step 2), we arrive at $\lm_p\to \olm$ as $p\to \op$.
Clearly, for any such a $p$, the pair  $(x_p,\lm_p)$ is a solution to the KKT system of \eqref{comp2}, namely the generalized equation \eqref{kktsub4}.
Because we have $(x_p,\lm_p)\to (\ox,\olm)$ as $(u,\mu)=p\to \op=(\ox,\olm)$,  $(\ox,\olm)$ is a hemistable solution to the KKT system \eqref{vs}.
\end{proof}

Note that Proposition~\ref{solva} is an extension of \cite[Proposition~6.3]{bo94}, which was established a similar conclusion for NLPs; see also  \cite[Theorem~5.2]{mms3} for  
a similar result for parabolically regular constrained optimization problems. 

Recall that the generic SQP method for the composite problem \eqref{comp} is given as follows: 
\begin{Algorithm} [generic SQP method]\label{sqpal} {\rm Choose $(x_k,\lm_k)\in \R^n\times \R^m$ and set $k=0$.
\begin{itemize}[noitemsep]
\item [(1)] If $(x_k,\lm_k)$ satisfies the KKT system \eqref{vs}, then stop.
\item [(2)] Choose an $n\times n$ symmetric matrix $H_k$ and compute $(x_{k+1},\lm_{k+1})$
as a solution to the KKT system of the subproblem \eqref{subs1}, which can be described by the generalized equation
\begin{equation}\label{kktsub}
 \begin{bmatrix}
0\\
0
\end{bmatrix}
  \in 
 \begin{bmatrix}
\nabla_{x}L( x_k,\lm_k)\\
-\Phi(x_k)
\end{bmatrix}
 +
 \begin{bmatrix}
H_k& \nabla \Phi(x_k)^* \\
-\nabla \Phi(x_k)&0
\end{bmatrix}
 \begin{bmatrix}
x-x_k\\
\lm- \lm_k
\end{bmatrix}
+\begin{bmatrix}
N_\Th(x)\\
(\sub g)^{-1}(\lm)
\end{bmatrix}.
\end{equation}
\item [(3)] Increase $k$ by $1$ and then go back to Step (1).
\end{itemize}}
\end{Algorithm}

Now we are ready to present the primal-dual superlinear convergence of the basic SQP method.

\begin{Theorem}[primal-dual superlinear convergence of the basic SQP method]\label{supsqp} Assume that  $(\ox,\olm)$ is a solution to the KKT system \eqref{vs},  that  the second-order sufficient condition \eqref{sosc} is satisfied at $(\ox,\olm)$, and that   
 $\Lambda(\ox)=\{\olm\}$. Then there exists a positive constant $\dd$ such that for any starting point $(x_0,\lm_0)\in\R^n\times\R^m$ sufficiently close to $(\ox,\olm)$,
we can find a sequence $\{(x_k,\lm_k)\}\subset\R^n\times\R^m$, generated by Algorithm~{\rm\ref{sqpal}} with $H_k$ taken from \eqref{hk}, satisfying
\begin{equation}\label{lc1}
\|(x_{k+1}-x_k,\lm_{k+1}-\lm_k)\|\le\dd.
\end{equation}
Moreover, every such a sequence  converges to $(\ox,\olm)$, and the rate of convergence is superlinear.
\end{Theorem}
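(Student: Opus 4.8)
The plan is to view the KKT system \eqref{vs} as the generalized equation \eqref{ge2} and to invoke the abstract local convergence theorem \cite[Theorem~3.2]{is14} for the (Josephy--)Newton method applied to generalized equations. That theorem delivers exactly the conclusion of the present statement --- existence of SQP sequences obeying the step bound \eqref{lc1}, their convergence to $(\ox,\olm)$, and the superlinear rate --- once two properties of the solution $(\ox,\olm)$ of \eqref{ge2} are established: semistability and hemistability. Here one uses that the KKT system \eqref{kktsub} of the subproblem \eqref{subs1} with $H_k$ taken from \eqref{hk} is precisely the partial linearization at $(x_k,\lm_k)$ of the single-valued part of \eqref{ge2}, i.e. the Newton step for this generalized equation, so the basic SQP iteration is genuinely the Newton method for \eqref{ge2}.

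First I would verify semistability. By definition, semistability of the solution $(\ox,\olm)$ of \eqref{ge2} is nothing but the isolated calmness of the inverse mapping $G^{-1}$ at $\big((0,0),(\ox,\olm)\big)$; since $G^{-1}=S$ for $S$ from \eqref{maps}, this is the isolated calmness of $S$ at $\big((0,0),(\ox,\olm)\big)$. Our hypotheses --- the second-order sufficient condition \eqref{sosc} at $(\ox,\olm)$ together with $\Lambda(\ox)=\{\olm\}$ --- are exactly condition (a) of Theorem~\ref{smrso}, so condition (d) there holds as well; in particular $S$ is isolated calm at $\big((0,0),(\ox,\olm)\big)$, which is the required semistability (and along the way $\ox$ is a local minimizer of \eqref{comp}).

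Next I would invoke hemistability, which for \eqref{ge2} at $(\ox,\olm)$ asks that the linearized subproblem generalized equation \eqref{kktsub4} have, for every $(u,\mu)$ near $(\ox,\olm)$, a solution that converges to $(\ox,\olm)$ as $(u,\mu)\to(\ox,\olm)$. This is precisely the content of Proposition~\ref{solva}, proved under the same two hypotheses. With semistability and hemistability both in hand, \cite[Theorem~3.2]{is14} applies and produces the constant $\dd>0$, the SQP sequences satisfying $\|(x_{k+1}-x_k,\lm_{k+1}-\lm_k)\|\le\dd$, and their superlinear convergence to $(\ox,\olm)$; the step bound \eqref{lc1} is part of that theorem's conclusion and simply records that at each iteration one selects the subproblem solution that stays closest to the current iterate.

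The genuinely substantive work --- extracting semistability from the second-order sufficient condition plus multiplier uniqueness, and hemistability from the solvability of the SQP subproblems --- has already been carried out in Theorem~\ref{smrso} and Proposition~\ref{solva}, so the present proof is mostly a matter of assembling these pieces. The only point that still needs care is matching the regularity hypotheses of \cite[Theorem~3.2]{is14} to the data of \eqref{comp}: the single-valued part $(x,\lm)\mapsto\big(\nabla_x L(x,\lm),-\Phi(x)\big)$ must be continuously differentiable near $(\ox,\olm)$, which holds because $\ph$ and $\Phi$ are twice continuously differentiable; since only a superlinear (not quadratic) rate is asserted, no Lipschitz condition on the Hessians is needed. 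This is the main, and rather mild, obstacle.
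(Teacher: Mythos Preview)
Your proposal is correct and follows essentially the same approach as the paper: reduce to \cite[Theorem~3.2]{is14} by verifying semistability of $(\ox,\olm)$ via Theorem~\ref{smrso} and hemistability via Proposition~\ref{solva}. The paper's own proof is in fact a terse version of exactly what you wrote, so your elaboration on the differentiability hypothesis and the identification of \eqref{kktsub} as the Newton step for \eqref{ge2} is accurate and fills in details the paper leaves implicit.
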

\begin{proof} As pointed out earlier in this section,  the superlinear convergence of the generalized equation \eqref{ge2}
can be ensured via   \cite[Theorem~3.2]{is14} under the semistability and hemistability of $(\ox,\olm)$. Remember that 
the former   amounts to the isolated calmness of the solution mapping $S$, which is satisfied by Theorem~\ref{smrso} under the imposed 
assumptions. The hemistability  of $(\ox,\olm)$ comes from Proposition~\ref{solva}. Appealing now to  \cite[Theorem~3.2]{is14} justifies the claimed conclusions.
\end{proof}

Note that the primal-dual superlinear convergence of the basic SQP method for \eqref{comp} with $\Th=\R^n$
was established recently in \cite[Theorem~7.3]{be} under the strong second-order sufficient condition, the nondegeneracy condition, and the strict complementary condition,
which are strictly stronger than the assumptions utilized in Theorem~\ref{supsqp}. In fact, the assumptions used in \cite[Theorem~7.3]{be} result in the strong metric regularity of 
the solution mapping $S$, which means that its inverse mapping, namely $G$ from \eqref{GKKT}, admits a
single-valued Lipschitzian graphical localization (see \cite[page~4]{dr}). However, our assumptions in Theorem~\ref{supsqp} imply via Theorem~\ref{smrso} and $G=S^{-1}$ that the    mapping $G$ is strongly metrically subregular, which is strictly 
weaker than the latter strong metric regularity. 

Note also that the imposed assumptions in Theorem~\ref{smrso} do not guarantee the uniqueness of minimizers of the subproblems \eqref{subs1}. 
So the localization condition \eqref{lc1} is required to filter out those minimizers of \eqref{subs1} that are not   sufficiently close  to $\ox$; see \cite[Examples~5.1 and 5.2]{is15} for a detailed discussion about the importance 
of \eqref{lc1}.

\section{Primal Superlinear Convergence of Quasi-Newton   SQP Methods}\sce \label{sect06}

This section aims to present our main results in this paper in which we  characterize   the primal superlinear convergence of the quasi-Newton SQP method for the composite problem \eqref{comp}
via the Dennis-Mor\'e condition \eqref{dmc}. It is worth mentioning that in general the superlinear convergence of a primal-dual sequence 
does not yield that of the primal part of the sequence; see \cite[Exercise~14.8]{bgls}. Since we do not   have a primal-dual convergence  rate  for the 
quasi-Newton SQP method, achieving the primal superlinear convergence  for the latter method is of great importance. Our first result 
provides a characterization of this primal superlinear convergence under the second-order sufficient condition.

\begin{Theorem}[characterization of primal superlinear convergence]\label{psqp}
 Let $(\ox,\olm)$ be a solution to the KKT system \eqref{vs}, let  $\{H_k\b$ be a sequence of  $n\times n$ symmetric matrices 
 and let $\{(x_k,\lm_k)\b$ be constructed via Algorithm~{\rm\ref{sqpal}}.
 Assume further that the sequence $\{(x_k,\lm_k)\b$ converges to  $(\ox,\olm)$ as $k\to \infty$. Then the following conditions hold:
\begin{itemize}[noitemsep,topsep=0pt]
\item[\rm{(a)}]  if $\Th=\R^n$ in \eqref{comp} and the second-order sufficient condition \eqref{sosc} 
holds at $(\ox,\olm)$ and if the 
Dennis-Mor\'e condition \eqref{dmc}   is satisfied, then the rate of convergence of the primal sequence $\{x_k\b$ is superlinear;
\item[\rm{(b)}] if the rate of convergence of the primal sequence $\{x_k\b$ is superlinear, then the Dennis-Mor\'e condition \eqref{dmc}
is satisfied.

\end{itemize}
\end{Theorem}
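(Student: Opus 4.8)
The plan is to prove the two implications by a common device: to recognize the subproblem iterate $(x_{k+1},\lm_{k+1})$ as a solution of a canonically perturbed KKT system. Reading off the generalized equation \eqref{kktsub} and putting
$$v_{k+1}:=\nabla_x L(x_{k+1},\lm_{k+1})-\nabla_x L(x_k,\lm_k)-H_k(x_{k+1}-x_k)-\nabla \Phi(x_k)^*(\lm_{k+1}-\lm_k),\qquad p_{k+1}:=\Phi(x_k)+\nabla \Phi(x_k)(x_{k+1}-x_k)-\Phi(x_{k+1}),$$
I would first check directly from \eqref{kktsub}, \eqref{GKKT}, and \eqref{maps} that $(x_{k+1},\lm_{k+1})\in S(v_{k+1},p_{k+1})$. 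Since $\Phi$ is twice continuously differentiable, $\|p_{k+1}\|=O(\|x_{k+1}-x_k\|^2)$; and a first-order Taylor expansion of $\nabla_x L$ about $(\ox,\olm)$, together with the continuity of $\nabla^2_{xx}L$ and the assumed convergence $(x_k,\lm_k)\to(\ox,\olm)$, yields the key identity $v_{k+1}=(\nabla^2_{xx}L(x_k,\lm_k)-H_k)(x_{k+1}-x_k)+o(\|x_{k+1}-x_k\|)$. I would also record the elementary bookkeeping fact that primal superlinear convergence, $\|x_{k+1}-\ox\|=o(\|x_k-\ox\|)$, makes $\|x_{k+1}-x_k\|$ and $\|x_k-\ox\|$ comparable and gives $\|x_{k+1}-\ox\|=o(\|x_{k+1}-x_k\|)$, so that $o(\|x_{k+1}-x_k\|)$ and $o(\|x_k-\ox\|)$ are interchangeable.

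For part (a): from the identity for $v_{k+1}$, the Dennis-Mor\'e condition \eqref{dmc}, and the nonexpansiveness of the projection $P_\D$ onto the closed convex cone $\D$, I get $\|P_\D(v_{k+1})\|=o(\|x_{k+1}-x_k\|)$, while $\|p_{k+1}\|=o(\|x_{k+1}-x_k\|)$ is already in hand. Since $(v_{k+1},p_{k+1})\to(0,0)$ and $(x_{k+1},\lm_{k+1})\to(\ox,\olm)$, for large $k$ these pairs lie in the neighborhoods $U$ and $V$ supplied by Theorem~\ref{prim1} (applicable because \eqref{sosc} is assumed), and that estimate gives $\|x_{k+1}-\ox\|\le\kappa(\|P_\D(v_{k+1})\|+\|p_{k+1}\|)=o(\|x_{k+1}-x_k\|)$; converting the right-hand side to $o(\|x_k-\ox\|)$ via the triangle inequality delivers the asserted primal superlinear rate.

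For part (b): assuming $\|x_{k+1}-\ox\|=o(\|x_k-\ox\|)$, write $K:=K_\Th(\ox,-\nabla_x L(\ox,\olm))$. From \eqref{kktsub}, $n_k:=-[\nabla_x L(x_k,\lm_k)+H_k(x_{k+1}-x_k)+\nabla \Phi(x_k)^*(\lm_{k+1}-\lm_k)]$ lies in $N_\Th(x_{k+1})$, hence — by polyhedrality of $\Th$ and $x_{k+1}\to\ox$ — in $N_\Th(\ox)\subset K^*$ (using $K^*=N_\Th(\ox)+[\nabla_x L(\ox,\olm)]$). I stress this cruder inclusion because $H_k$ is a priori uncontrolled, so $n_k$ need not converge and the reduction lemma \eqref{rl10} is not available for the $\Th$-block. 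A Taylor expansion of $\nabla_x L$ about $(\ox,\olm)$ — replacing $\nabla^2_{xx}L(\ox,\olm)$ by $\nabla^2_{xx}L(x_k,\lm_k)$ and $\nabla \Phi(x_k)$ by $\nabla \Phi(\ox)$ up to $o(\|x_{k+1}-x_k\|)$, and absorbing $\nabla^2_{xx}L(\ox,\olm)(x_{k+1}-\ox)$ into the error — turns $n_k+\nabla_x L(\ox,\olm)\in K^*$ into
$$(\nabla^2_{xx}L(x_k,\lm_k)-H_k)(x_{k+1}-x_k)-\nabla \Phi(\ox)^*(\lm_{k+1}-\olm)+e_k\in K^*,\qquad \|e_k\|=o(\|x_{k+1}-x_k\|).$$
For the multiplier term I use the $g$-block: $\lm_{k+1}\in\sub g(\Phi(x_k)+\nabla \Phi(x_k)(x_{k+1}-x_k))$, whose argument differs from $\Phi(\ox)$ by $\nabla \Phi(\ox)(x_{k+1}-\ox)+o(\|x_{k+1}-x_k\|)=o(\|x_{k+1}-x_k\|)$; here the two points do converge in $\gph\sub g$, so the reduction lemma for CPLQ functions (Theorem~\ref{rlcp}) applies and gives $\lm_{k+1}-\olm\in D(\sub g)(\Phi(\ox),\olm)(\xi_k)$ with $\|\xi_k\|=o(\|x_{k+1}-x_k\|)$. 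The outer Lipschitzian property of the proto-derivative (Proposition~\ref{ouli}(b)) at $0$, combined with $D(\sub g)(\Phi(\ox),\olm)(0)=K_g(\Phi(\ox),\olm)^*$ from Proposition~\ref{sop}(c), then yields $\lm_{k+1}-\olm\in K_g(\Phi(\ox),\olm)^*+o(\|x_{k+1}-x_k\|)\B$. Finally, since $\D=K\cap\{w:\nabla \Phi(\ox)w\in K_g(\Phi(\ox),\olm)\}$ with all cones polyhedral, polyhedral duality (as in \cite[Corollary~11.25(d)]{rw}) gives $\D^*=K^*+\nabla \Phi(\ox)^*(K_g(\Phi(\ox),\olm)^*)$; substituting the last two displays into this identity shows $(\nabla^2_{xx}L(x_k,\lm_k)-H_k)(x_{k+1}-x_k)\in\D^*+o(\|x_{k+1}-x_k\|)\B$, and projecting onto $\D$ (with $P_\D(\D^*)=\{0\}$ and nonexpansiveness of $P_\D$) produces exactly \eqref{dmc}.

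The routine part is the Taylor-expansion bookkeeping. I expect the main obstacles to be two: first, the polyhedral-convex identity $\D^*=K^*+\nabla \Phi(\ox)^*(K_g(\Phi(\ox),\olm)^*)$ together with the supporting inclusions $N_\Th(\ox)\subset K^*$ and $\sub g(\Phi(\ox))-\olm\subset K_g(\Phi(\ox),\olm)^*$, which are what make the $\D^*$-membership in part (b) go through; and second, carrying part (b) without any bound on $\|H_k\|$ — this forces the $\Th$-block to be routed through $N_\Th(x_{k+1})\subset K^*$ rather than through the reduction lemma, and one must double-check that every $o(\|x_{k+1}-x_k\|)$ estimate survives given that $H_k$ enters only inside the product $(\nabla^2_{xx}L(x_k,\lm_k)-H_k)(x_{k+1}-x_k)$ that \eqref{dmc} controls.
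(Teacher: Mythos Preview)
Your overall architecture matches the paper's: rewrite the iteration as $(x_{k+1},\lm_{k+1})\in S(v_{k+1},p_{k+1})$, then for (a) feed this into Theorem~\ref{prim1}, and for (b) show $v_{k+1}+o(\|x_{k+1}-x_k\|)\in\D^*$ via the polar decomposition $\D^*=K_\Th(\ox,-\nabla_xL(\ox,\olm))^*+\nabla\Phi(\ox)^*K_g(\Phi(\ox),\olm)^*$ and project. Part~(b) is essentially correct; your route to $\lm_{k+1}-\olm\in K_g(\Phi(\ox),\olm)^*+o(\|x_{k+1}-x_k\|)\B$ through the reduction lemma (Theorem~\ref{rlcp}) and Proposition~\ref{ouli}(b) at $0$ is a valid alternative to the paper's slightly shorter argument, which uses only the outer Lipschitzian property of $\sub g$ itself (Proposition~\ref{ouli}(a)) and the local exactness of the polyhedral tangent cone $T_{\sub g(\Phi(\ox))}(\olm)=K_g(\Phi(\ox),\olm)^*$.

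There is, however, a genuine gap in part~(a). You assert ``$(v_{k+1},p_{k+1})\to(0,0)$'' in order to place these pairs in the neighborhood $U$ required by Theorem~\ref{prim1}, but you never argue this, and it does \emph{not} follow from the Dennis--Mor\'e condition: \eqref{dmc} controls only $P_\D(u_{k+1})$, not $u_{k+1}$ itself, and since $\{H_k\}$ is a priori unbounded you cannot conclude $u_{k+1}\to 0$ from $x_{k+1}-x_k\to 0$. This is precisely where the hypothesis $\Th=\R^n$ is consumed: with $N_\Th(x_{k+1})=\{0\}$, the first line of \eqref{kktsub} becomes an equality, giving $H_k(x_{k+1}-x_k)=-\nabla_xL(x_k,\lm_k)-\nabla\Phi(x_k)^*(\lm_{k+1}-\lm_k)\to -\nabla_xL(\ox,\olm)=0$, whence $u_{k+1}\to 0$ and $v_{k+1}\to 0$. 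Without this step your appeal to Theorem~\ref{prim1} is unjustified, and the role of the assumption $\Th=\R^n$ --- which the paper flags explicitly after the proof --- remains unexplained in your argument.
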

\begin{proof} 
Remember that  $(x_{k+1},\lm_{k+1})$ is a solution to  the generalized equation \eqref{kktsub}. This gives us  
\begin{equation}\label{fn88}
\begin{cases}
0\in \nabla_{x}L( x_{k},\lm_k) + H_k(x_{k+1}-x_k) + \nabla \Phi(x_{k})^*( \lm_{k+1}-\lm_{k})+N_\Th(x_{k+1})\;\;\mbox{and}\\
\lm_{k+1}    \in \sub g\big ( \Phi(x_{k})+\nabla \Phi(x_{k})( x_{k+1}-x_{k})\big).
 \end{cases}
\end{equation}
Since   $L$ and $\Phi$ are twice continuously differentiable around $\ox$, we get
\begin{eqnarray}\label{fn87}
\nabla_{x}L( x_{k+1},\lm_{k+1})&=& \nabla_{x}L( x_{k+1},\lm_k)+ \nabla \Phi(x_{k+1})^*( \lm_{k+1}-\lm_{k}) \nonumber\\
&=&  \nabla_{x}L( x_{k},\lm_k) + \nabla^2_{xx}L( x_k,\lm_k)(x_{k+1}-x_k) \nonumber \\
&&+ \nabla \Phi(x_{k})^*( \lm_{k+1}-\lm_{k})+o(\|x_{k+1}-x_k\|),
\end{eqnarray}
and  
\begin{equation}\label{fn34}
\Phi(x_{k+1})= \Phi(x_{k})+\nabla \Phi(x_{k})( x_{k+1}-x_{k})+ o(\|x_{k+1}-x_k\|).
\end{equation}
Set 
$u_{k+1}:=\big( \nabla^2_{xx}L( x_k,\lm_k)-H_k\big)(x_{k+1}-x_k) $,  $p_{k+1}:= \Phi(x_{k})+\nabla \Phi(x_{k})( x_{k+1}-x_{k}) -\Phi(x_{k+1})$ and 
$$
q_{k+1}:=\nabla_{x}L( x_{k+1},\lm_{k+1})-  \nabla_{x}L( x_{k},\lm_k) - \nabla^2_{xx}L( x_k,\lm_k)(x_{k+1}-x_k)- \nabla \Phi(x_{k})^*( \lm_{k+1}-\lm_{k}).
$$
It follows from \eqref{fn87} and \eqref{fn34}, respectively, that $p_{k+1}=o(\|x_{k+1}-x_k\|)$ and $q_{k+1}=o(\|x_{k+1}-x_k\|)$. Also, observe that    \eqref{fn88} can be equivalently rewritten as 
\begin{equation}\label{pe20}
v_{k+1}:=u_{k+1}+q_{k+1}\in \nabla_{x}L( x_{k+1},\lm_{k+1}) +N_\Th(x_{k+1}), \;\;\lm_{k+1}    \in \sub g\big ( \Phi(x_{k+1})+  p_{k+1}\big).
\end{equation}

After these presentations, we begin to prove (a). It follows from \eqref{sosc} and Theorem~\ref{prim1} that there are some neighborhoods
$U$ of $(0,0)\in \R^n\times \R^m$ and $V$ of $(\ox,\olm)$ and a constant $\kappa\ge 0$ such that 
for any $(v,p)\in U$ and any  $(x,\lm)\in  S(v,p)\cap V$ the estimate \eqref{semi2} holds, where the solution mapping $S$ comes from \eqref{maps}.
By $(x_k,\lm_k)\to (\ox,\olm)$, we can assume without loss of generality that $(x_k,\lm_k)\in V$
for all $k\in \N$. Moreover, by $\Th=\R^n$, we get $N_\Th(x_{k+1})=\{0\}$ in  \eqref{fn88}. The latter along with \eqref{fn88}, $\nabla_x L(\ox,\olm)=0$,  and $(x_k,\lm_k)\to (\ox,\olm)$ yields 
$H_k(x_{k+1}-x_k) \to 0$ as $k\to \infty$ and so  $u_{k+1}\to 0$ as $k\to \infty$. Thus we get 
$v_{k+1}\to 0$ and $p_{k+1}\to 0$ as $k\to \infty$. 
Again we can assume with no harm that $(v_{k+1},p_{k+1})\in U$  and  $( x_{k+1},\lm_{k+1})\in S( v_{k+1},p_{k+1})\cap V$ for all $k\in \N$. 
 Appealing now to \eqref{semi2} and the Dennis-Mor\'e condition \eqref{dmc} yields the estimates 
\begin{eqnarray*}
\|x_{k+1}-\ox\| &\le & \kappa\big(\|P_\D(v_{k+1})\|+\|p_{k+1}\|\big)\\
&\le & \kappa\big(\|P_\D(u_{k+1})\|+ o(\|x_{k+1}-x_k\|)\big)= o(\|x_{k+1}-x_k\|),
\end{eqnarray*}
which in turn    imply that 
\begin{equation*}
\|x_{k+1}-\ox\|=o(\|x_{k+1}-\ox\|+\|x_{k}-\ox\|).
\end{equation*}
Set $\al_k:= \dfrac{\|x_{k+1}-\ox\|}{\|x_{k+1}-\ox\|+\|x_{k}-\ox\|}$ and observe that 
\begin{equation}\label{fn90}
\frac{\|x_{k+1}-\ox\|}{\|x_{k}-\ox\|}= \al_k\cdot \big(\frac {\|x_{k+1}-\ox\|}{\|x_{k}-\ox\|}+1\big).
\end{equation}
This implies that 
$$
\frac{\|x_{k+1}-\ox\|}{\|x_{k}-\ox\|}=\frac{\al_k}{1-\al_k}\to 0\;\;\mbox{as}\;\;k\to \infty,
$$
 and hence proves the primal superlinear convergence of $\{x_k\b$,  claimed in (a).
 
 Turning to (b), assume that the rate of convergence of $\{x_k\b$ is superlinear, meaning that  $\|x_{k+1}-\ox\|=o(\|x_{k}-\ox\|)$ as  $k\to \infty$.
 Thus we can assume without loss of generality that 
$\|x_{k+1}-\ox\|\le \frac{1}{2}\|x_k-\ox\|$ for all $k$ sufficiently large. This implies that  
$$
\|x_k-\ox\|\le \|x_{k+1}-x_k\|+\|x_{k+1}-\ox\|\le \|x_{k+1}-x_k\|+ \frac{1}{2}\|x_k-\ox\|,
$$
which subsequently brings us to 
$$
\|x_k-\ox\|\le  2 \|x_{k+1}-x_k\|
$$
for all $k$ sufficiently large. Combining these, we get 
\begin{equation}\label{gl06}
\|x_{k+1}-\ox\|=o(\|x_{k+1}-x_k\|) \quad \mbox{as}\quad  k\to \infty.
\end{equation}
Since $x_k\to \ox$ as $k\to \infty$, we deduce from Proposition~\ref{ouli}(a) that there exists a constant $\ell\ge 0$ such that 
$$
  \sub g\big ( \Phi(x_{k+1})+  p_{k+1}\big)\subset \sub g(\Phi(\ox))+\ell\| \Phi(x_{k+1})+  p_{k+1}-\Phi(\ox)\|\B
$$
for all $k$ sufficiently large. By \eqref{pe20}, we have  $\lm_{k+1}    \in \sub g\big ( \Phi(x_{k+1})+  p_{k+1}\big)$.
This together with the inclusion above, \eqref{gl06}, the definition of $p_{k+1}$ implies that 
$$
\lm_{k+1}   + o(\|x_{k+1}-x_k\|) \in \sub g(\Phi(\ox)).
$$
Since $ \sub g(\Phi(\ox))$ is a polyhedral convex set, we conclude from \cite[Exercise~6.47]{rw} that 
$$\big( \sub g(\Phi(\ox))-\olm\big)\cap   {\cal O}=T_{ \ss \sub g(\Phi(\ox))}(\olm)\cap {\cal O}$$ 
for some neighborhood ${\cal O}$ of $0$  in $\R^m$.
This tells us that for all $k $ sufficiently large we get 
\begin{equation}\label{pe21}
\lm_{k+1} -\olm  + o(\|x_{k+1}-x_k\|) \in \big( \sub g(\Phi(\ox))-\olm\big)\cap {\cal O}\subset T_{\ss  \sub g(\Phi(\ox))}(\olm)=N_{\ss  \sub g(\Phi(\ox))}(\olm)^*=K_g(\Phi(\ox),\olm)^*,
\end{equation}
where the last equality results from \cite[Theorem~13.14]{rw}.  Because $x_k\to \ox$ as $k\to \infty$ and $\Th$ is polyhedral, we have the inclusion $N_\Th(x_k)\subset N_\Th(\ox)$
for all $k$ sufficiently large. This, combined with \eqref{pe20}, leads us to 
\begin{eqnarray}
v_{k+1}&\in&  \nabla_{x}L( x_{k+1},\lm_{k+1}) +N_\Th(x_{k+1})\nonumber\\
&\subset & \nabla_{x}L( x_{k+1},\olm)+\nabla \Phi(x_{k+1})^*(\lm_{k+1}-\olm) +N_\Th(\ox) \nonumber\\
&=& \nabla_{x}L( \ox,\olm) +  \nabla^2_{xx}L( \ox,\olm)(x_{k+1}-\ox)+ \nabla \Phi(\ox)^*(\lm_{k+1}-\olm)  \nonumber\\
&&+\big(\nabla^2 \Phi(\ox)(x_{k+1}-\ox)\big)^*(\lm_{k+1}-\olm)+o(\|x_{k+1}-\ox\|)+N_\Th(\ox) \nonumber\\
&=&  \nabla_{x}L( \ox,\olm)+ \nabla \Phi(\ox)^*(\lm_{k+1}-\olm)+ o(\|x_{k+1}-x_k\|)+N_\Th(\ox)\label{pe22},
\end{eqnarray}
where the last equality comes from \eqref{gl06}. Since both sets on the right-hand side of \eqref{coned}  are polyhedral, we have by \cite[Corollary~11.25(d)]{rw} that 
\begin{eqnarray*}
\D^*&=&K_\Th\big(\ox,-\nabla_x L(\ox,\olm)\big)^*+\Big(  \big\{w\in \R^n\big|\; \nabla \Phi(\ox)w\in K_g(\Phi(\ox),\olm)\big\}\Big)^*\\
&= & \Big(T_\Th(\ox)\cap [ \nabla_x L(\ox,\olm)]^\bot\Big)^*+ \big\{\nabla \Phi(\ox)^*u \big|\; u\in K_g(\Phi(\ox),\olm)^*\big\}\\
&= & N_\Th(\ox)+[ \nabla_x L(\ox,\olm)]+ \big\{\nabla \Phi(\ox)^*u \big|\; u\in K_g(\Phi(\ox),\olm)^*\big\}.
\end{eqnarray*}
This, \eqref{pe21}, and  \eqref{pe22} yield the inclusion 
$$
v_{k+1}+o(\|x_{k+1}-x_k\|)\in \D^*,
$$
which in turn results in 
$$
 P_\D\big(v_{k+1}+o(\|x_{k+1}-x_k\|)\big)=0,
$$ 
since $\D$ is a convex cone. Remember that $v_{k+1}=u_{k+1}+q_{k+1}$ with $q_{k+1}=o(\|x_{k+1}-x_k\|)$.  Thus we get
\begin{eqnarray*}
\|P_\D\big(u_{k+1}\big)\|&=&\| P_\D\big(u_{k+1}\big)-P_\D\big(v_{k+1}+o(\|x_{k+1}-x_k\|)\big)\|\\
&\le&  o(\|x_{k+1}-x_k\|)+ \|q_{k+1}\|=o(\|x_{k+1}-x_k\|).
\end{eqnarray*}
This implies that $P_\D\big(u_{k+1}\big)=o(\|x_{k+1}-x_k\|)$. Combining this and the definition of $u_{k+1}$ ensures the Dennis-Mor\'e condition \eqref{dmc} 
and hence completes the proof of (b).
\end{proof}

Note that in Theorem~\ref{psqp}(a), we assume that $\Th=\R^n$ in the composite problem \eqref{comp}.
The reason for this assumption is that the proof requires that  the vector $H_k(x_{k+1}-x_k)$ in \eqref{fn88} converge  to $0$. 
As shown in the given proof, this can be achieved when $\Th=\R^n$. While this assumption does not seem to be restrictive, it is unclear whether it can be omitted.

We can replace the second-order sufficient condition \eqref{sosc} with the noncriticality 
of Lagrange multipliers in Theorem~\ref{psqp}(a) by replacing the convex cone $\D$ in the Dennis-Mor\'e condition \eqref{dmc}  with the linear subspace $\D_+$.
 
\begin{Theorem}[primal  superlinear convergence under noncriticality]\label{psqp2}
 Let $(\ox,\olm)$ be a solution to the KKT system \eqref{vs} with $\Th=\R^n$, let  $\{H_k\b$ be a sequence of  $n\times n$ symmetric matrices 
 and let $\{(x_k,\lm_k)\b$ be constructed via Algorithm~{\rm\ref{sqpal}}.
 Assume further that the sequence $\{(x_k,\lm_k)\b$ converges to  $(\ox,\olm)$ as $k\to \infty$.  
If $\olm$ is a noncritical Lagrange multiplier for the KKT system \eqref{vs} and if the condition 
\begin{equation}\label{dmc2}
P_{\D_+}\big(\big( \nabla_{xx}^2L(x_k,\lm_k)- H_k\big)(x_{k+1}-x_k)\big)=o(\|x_{k+1}-x_k\|)
\end{equation}
with $\D_+$ taken from \eqref{cones} is satisfied, then the rate of convergence of the primal sequence $\{x_k\b$ is superlinear.
\end{Theorem}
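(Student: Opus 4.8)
The plan is to mirror the proof of Theorem~\ref{psqp}(a), replacing the appeal to Theorem~\ref{prim1} (which uses the second-order sufficient condition) with the appeal to Theorem~\ref{prim2} (which uses only noncriticality), and replacing the convex cone $\D$ with the linear subspace $\D_+$ throughout. First I would recall, exactly as in the beginning of the proof of Theorem~\ref{psqp}, that $(x_{k+1},\lm_{k+1})$ solves the generalized equation \eqref{kktsub}, so that the relations \eqref{fn88} hold. I would then introduce the same residual quantities $u_{k+1}:=\big(\nabla^2_{xx}L(x_k,\lm_k)-H_k\big)(x_{k+1}-x_k)$, $p_{k+1}:=\Phi(x_k)+\nabla\Phi(x_k)(x_{k+1}-x_k)-\Phi(x_{k+1})$, and $q_{k+1}$ as defined there, and use the twice continuous differentiability of $L$ and $\Phi$ to get $p_{k+1}=o(\|x_{k+1}-x_k\|)$ and $q_{k+1}=o(\|x_{k+1}-x_k\|)$, together with the rewriting \eqref{pe20}: $v_{k+1}:=u_{k+1}+q_{k+1}\in\nabla_x L(x_{k+1},\lm_{k+1})+N_\Th(x_{k+1})$ and $\lm_{k+1}\in\sub g(\Phi(x_{k+1})+p_{k+1})$.

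Next I would invoke Theorem~\ref{prim2}: since $\olm$ is a noncritical Lagrange multiplier for \eqref{vs}, there are neighborhoods $U$ of $(0,0)$ and $V$ of $(\ox,\olm)$ and a constant $\kappa\ge 0$ such that for all $(v,p)\in U$ and all $(x,\lm)\in S(v,p)\cap V$ one has $\|x-\ox\|\le\kappa(\|P_{\D_+}(v)\|+\|p\|)$. Because $(x_k,\lm_k)\to(\ox,\olm)$, I may assume $(x_k,\lm_k)\in V$ for all $k$. Using $\Th=\R^n$ gives $N_\Th(x_{k+1})=\{0\}$, so from \eqref{fn88}, $\nabla_x L(\ox,\olm)=0$, and $(x_k,\lm_k)\to(\ox,\olm)$ we deduce $H_k(x_{k+1}-x_k)\to 0$, hence $u_{k+1}\to 0$, hence $v_{k+1}\to 0$ and $p_{k+1}\to 0$; thus $(v_{k+1},p_{k+1})\in U$ and $(x_{k+1},\lm_{k+1})\in S(v_{k+1},p_{k+1})\cap V$ for all large $k$. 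Applying the estimate together with the modified Dennis--Mor\'e condition \eqref{dmc2}, and using that $P_{\D_+}$ is Lipschitz with $q_{k+1}=o(\|x_{k+1}-x_k\|)$, gives
$$
\|x_{k+1}-\ox\|\le\kappa\big(\|P_{\D_+}(v_{k+1})\|+\|p_{k+1}\|\big)\le\kappa\big(\|P_{\D_+}(u_{k+1})\|+o(\|x_{k+1}-x_k\|)\big)=o(\|x_{k+1}-x_k\|).
$$
From $\|x_{k+1}-x_k\|\le\|x_{k+1}-\ox\|+\|x_k-\ox\|$ this yields $\|x_{k+1}-\ox\|=o(\|x_{k+1}-\ox\|+\|x_k-\ox\|)$, and the standard trick with $\al_k:=\|x_{k+1}-\ox\|/(\|x_{k+1}-\ox\|+\|x_k-\ox\|)$ and the identity \eqref{fn90} gives $\|x_{k+1}-\ox\|/\|x_k-\ox\|=\al_k/(1-\al_k)\to 0$, i.e.\ primal superlinear convergence of $\{x_k\}$.

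The only genuinely new point — and the step I expect needs care rather than being routine — is verifying that $P_{\D_+}$ is well-defined and $1$-Lipschitz so that the chain of inequalities above goes through with $\D_+$ in place of $\D$. This is fine because $\D_+$ from \eqref{cones} is a \emph{linear subspace} (in particular a nonempty closed convex set), so the projection onto it is single-valued and nonexpansive; the fact that $\D\subset\D_+$ is not actually needed for the argument, though it is what makes \eqref{dmc2} a weaker requirement than \eqref{dmc}. Everything else is a verbatim adaptation of the proof of Theorem~\ref{psqp}(a), with Theorem~\ref{prim2} supplying the key primal estimate under the weaker noncriticality hypothesis.
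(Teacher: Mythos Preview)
Your proof is correct and follows essentially the same approach as the paper's own proof: reuse the setup \eqref{fn88}--\eqref{pe20} from Theorem~\ref{psqp}, invoke Theorem~\ref{prim2} in place of Theorem~\ref{prim1}, and conclude via the $\al_k$ argument around \eqref{fn90}. One small correction to your closing side remark: since $\D\subset\D_+$ and $\D_+$ is a subspace, one has $P_\D(v)=P_\D\big(P_{\D_+}(v)\big)$ and hence $\|P_\D(v)\|\le\|P_{\D_+}(v)\|$, so condition \eqref{dmc2} is actually \emph{stronger} than \eqref{dmc}, not weaker --- this is precisely the trade-off for relaxing the second-order sufficient condition to mere noncriticality.
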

\begin{proof} Define $p_{k+1}$ and $v_{k+1}$ as the beginning of the   proof of Theorem~\ref{psqp} and observe that \eqref{fn88}-\eqref{pe20}
hold.  For some neighborhoods
$U$ of $(0,0)\in \R^n\times \R^m$ and $V$ of $(\ox,\olm)$, obtained in  Theorem~\ref{prim2}, we can show via \eqref{pe20} as the proof of Theorem~\ref{psqp}(a) that 
$(v_{k+1},p_{k+1})\in U$  and  $( x_{k+1},\lm_{k+1})\in S( v_{k+1},p_{k+1})\cap V$ for all $k\in \N$ sufficiently large. Appealing now to \eqref{semi3} and   \eqref{dmc2} yields the estimates 
\begin{eqnarray*}
\|x_{k+1}-\ox\| &\le & \kappa\big(\|P_\D{_+}(v_{k+1})\|+\|p_{k+1}\|\big)\\
&\le & \kappa\big(\|P_\D{_+}(u_{k+1})\|+ o(\|x_{k+1}-x_k\|)\big)= o(\|x_{k+1}-x_k\|). 
\end{eqnarray*}
Following a similar argument as the proof of  \eqref{fn90} justifies the superlinear convergence of   the primal sequence $\{x_k\b$. 
\end{proof}
\begin{Remark}[discussion on primal superlinear convergence] {\rm 
Below, we discuss several issues related to Theorems~\ref{psqp} and \ref{psqp2}:

\begin{itemize}[noitemsep,topsep=0pt] 

\item[\rm{(a)}]  The primal superlinear convergence of the quasi-Newton SQP method  was studied 
for NLPs with only equality constraints in \cite{btw} and with both equality and inequality constraints in \cite{bo94}. 
The latter was slightly improved in \cite[Theorem~15.7]{bgls} in which it was justified for NLPs that under the second-order sufficient condition and the linear independence 
constraint qualification, the primal superlinear convergence of  the quasi-Newton SQP methods   amounts to a counterpart of the Dennis-Mor\'e condition \eqref{dmc} for this setting. 
 This result  was significantly improved in \cite[Theorem~4.1]{fis12} by showing that 
 the second-order sufficient condition alone suffices to establish   the latter characterization of the primal superlinear  convergence of quasi-Newton SQP methods. 
  Theorem~\ref{psqp} extends this characterization  for the composite problem \eqref{comp}. 

One can also find similar   results for generalized equations 
in \cite[Theorem~6E.3]{dr}. There are, however,  three important differences between Theorems~\ref{psqp} and \ref{psqp2} and those in \cite[Chapter~6]{dr}
applied to the generalized equation \eqref{ge2}. 
First,  \cite[Theorem~6E.3]{dr} utilizes the isloated calmness of the solution mapping $S$ from \eqref{maps}, which 
is strictly stronger than the noncriticality assumption exploited in Theorem~\ref{psqp2}.  In fact, while the former requires the uniqueness of Lagrange multipliers, 
the latter does not demand such a restriction on the Lagrange multiplier set $\Lm(\ox)$. Second,  
the imposed Dennis-Mor\'e condition in \cite[Theorem~6E.3]{dr} can be formulated for  \eqref{ge2} as   
\begin{equation}\label{dmc3}
\big( \nabla^2_{xx}L( x_k,\lm_k)-H_k\big)(x_{k+1}-x_k)=o(\|x_{k+1}-x_k\|),
\end{equation}
which clearly implies the Dennis-Mor\'e condition \eqref{dmc}. 
Finally, \cite[Theorem~6E.3]{dr} provides necessary and sufficient conditions -- not a characterization -- for the primal-dual superlinear convergence of the quasi-Newton SQP method.
In contrast,  Theorem~\ref{psqp}  achieves a characterization of the primal superlinear convergence of the latter method without requiring the uniqueness of Lagrange multipliers, assumed in \cite{dr}.
Note  that results as \cite[Theorem~6E.3]{dr} for the composite problem \eqref{comp} can be derived using Theorem~\ref{chcri} without   assuming the uniqueness of Lagrange multipliers
  and do not require the sharper primal estimates that were established in Theorems~\ref{prim1} and \ref{prim2}.

\item[\rm{(b)}] For the basic SQP method, namely when the matrices $H_k$ are chosen as \eqref{hk}, it follows from Theorem~\ref{psqp}
 that the second-order sufficient condition \eqref{sosc} and $\Lm(\ox)=\{\olm\}$, being equivalent to the dual condition \eqref{dual} by Proposition~\ref{unic},
 ensures the existence of a primal-dual sequence $\{(x_k,\lm_k)\b$ that converges to $(\ox,\olm)$. Observe also that   the Dennis-Mor\'e condition \eqref{dmc} automatically holds for this choice of $H_k$.
Combining these  ensures  the primal superlinear convergence of the basic SQP method for \eqref{comp} with $\Th=\R^n$ under these two assumptions.
\end{itemize}
}
\end{Remark}

Note that \cite[Theorem~6E.3]{dr} was used recently by Burke and Engle in \cite[Theorem~5.2]{be} for the composite problem \eqref{comp} with $\Th=\R^n$ 
to derive a primal-dual superlinear convergence of the quasi-Newton SQP method by assuming the condition \eqref{dmc3} and the isolated calmness of 
the solution mapping $S$ from \eqref{maps}--  the authors in \cite{be} assumed the strong metric subregularity of the mapping $G$ from \eqref{GKKT},
which is equivalent to the isolated calmness of $S$ since $S=G^{-1}$. Below we show that the later condition can be 
weakened to the noncriticality assumption, a condition that  does not necessarily yield the uniqueness of Lagrange multipliers.
\begin{Theorem}[primal  superlinear convergence under noncriticality]\label{psqp3}
 Let $(\ox,\olm)$ be a solution to the KKT system \eqref{vs}, let  $\{H_k\b$ be a sequence of  $n\times n$ symmetric matrices 
 and let $\{(x_k,\lm_k)\b$ be constructed via Algorithm~{\rm\ref{sqpal}}.
 Assume further that the sequence $\{(x_k,\lm_k)\b$ converges to  $(\ox,\olm)$ as $k\to \infty$.  
If $\olm$ is a noncritical Lagrange multiplier for the KKT system \eqref{vs} and if \eqref{dmc3} is satisfied, then the rate of convergence of the primal sequence $\{x_k\b$ is superlinear.
\end{Theorem}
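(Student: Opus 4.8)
The plan is to run the opening of the proof of Theorem~\ref{psqp} verbatim and then replace the role of the sharp primal estimate (Theorem~\ref{prim1}) by the calmness characterization of noncriticality in Theorem~\ref{chcri}; this is the same template as Theorem~\ref{psqp2}, but the strengthened Dennis--Mor\'e condition \eqref{dmc3} will make the restriction $\Th=\R^n$ unnecessary. First I would recall that, since $(x_{k+1},\lm_{k+1})$ solves the subproblem generalized equation \eqref{kktsub}, one has \eqref{fn88} and, using that $L$ and $\Phi$ are twice continuously differentiable near $\ox$, the expansions \eqref{fn87} and \eqref{fn34}. With $u_{k+1}:=\big(\nabla^2_{xx}L(x_k,\lm_k)-H_k\big)(x_{k+1}-x_k)$, and with $p_{k+1}$, $q_{k+1}$, and $v_{k+1}:=u_{k+1}+q_{k+1}$ defined exactly as in that proof, one gets $p_{k+1}=o(\|x_{k+1}-x_k\|)$, $q_{k+1}=o(\|x_{k+1}-x_k\|)$, and the inclusions \eqref{pe20}; in particular $(x_{k+1},\lm_{k+1})\in S(v_{k+1},p_{k+1})$ for $S$ from \eqref{maps}.

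Next I would invoke \eqref{dmc3}, which states precisely that $u_{k+1}=o(\|x_{k+1}-x_k\|)$, so that $v_{k+1}=u_{k+1}+q_{k+1}=o(\|x_{k+1}-x_k\|)$. Since $(x_k,\lm_k)\to(\ox,\olm)$ forces $\|x_{k+1}-x_k\|\to0$ and $(x_{k+1},\lm_{k+1})\to(\ox,\olm)$, it follows that $v_{k+1}\to0$ and $p_{k+1}\to0$. This is exactly the step at which the hypothesis $\Th=\R^n$ of Theorem~\ref{psqp}(a) and Theorem~\ref{psqp2} is no longer needed, because \eqref{dmc3} delivers the smallness of $u_{k+1}$ directly, rather than forcing one to extract $H_k(x_{k+1}-x_k)\to0$ from \eqref{fn88} via $N_\Th(x_{k+1})=\{0\}$.

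Then, as $\olm$ is noncritical, Theorem~\ref{chcri} supplies neighborhoods $U$ of $(0,0)$ and $V$ of $(\ox,\olm)$ and a constant $\kappa\ge0$ with $S(v,p)\cap V\subset\big(\{\ox\}\times\Lm(\ox)\big)+\kappa(\|v\|+\|p\|)\B$ for all $(v,p)\in U$. For all sufficiently large $k$ one has $(v_{k+1},p_{k+1})\in U$ and, by \eqref{pe20}, $(x_{k+1},\lm_{k+1})\in S(v_{k+1},p_{k+1})\cap V$, so taking the primal component of this estimate yields $\|x_{k+1}-\ox\|\le\kappa(\|v_{k+1}\|+\|p_{k+1}\|)=o(\|x_{k+1}-x_k\|)$. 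I would finish exactly as after \eqref{semi2} in the proof of Theorem~\ref{psqp}(a): $\|x_{k+1}-\ox\|=o(\|x_{k+1}-\ox\|+\|x_k-\ox\|)$ combined with the identity \eqref{fn90} gives $\|x_{k+1}-\ox\|/\|x_k-\ox\|\to0$, i.e. the primal sequence $\{x_k\b$ converges superlinearly.

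I do not anticipate a genuine obstacle: all the analytic content is already encapsulated in Theorem~\ref{chcri} and in the algebra at the start of the proof of Theorem~\ref{psqp}. The only points needing a little care are checking that $(v_{k+1},p_{k+1})$ eventually lies in $U$ and $(x_{k+1},\lm_{k+1})$ in $V$ (immediate from the convergences above) and noting that the calmness bound is stated relative to the whole set $\{\ox\}\times\Lm(\ox)$, so it is $\|x_{k+1}-\ox\|$ — not the distance of $\lm_{k+1}$ to some specific multiplier — that is controlled; this is precisely all that the superlinear primal rate requires, and is why the uniqueness of Lagrange multipliers, equivalently the isolated calmness of $S$ used in \cite[Theorem~5.2]{be}, can be dropped.
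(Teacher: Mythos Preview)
Your proposal is correct and follows essentially the same route as the paper's own proof: set up $u_{k+1},p_{k+1},q_{k+1},v_{k+1}$ and \eqref{pe20} as at the start of the proof of Theorem~\ref{psqp}, use \eqref{dmc3} to get $u_{k+1}=o(\|x_{k+1}-x_k\|)$ (hence $v_{k+1},p_{k+1}\to 0$ without needing $\Th=\R^n$), apply the calmness estimate \eqref{semi} from Theorem~\ref{chcri} to obtain $\|x_{k+1}-\ox\|\le\kappa(\|v_{k+1}\|+\|p_{k+1}\|)=o(\|x_{k+1}-x_k\|)$, and finish via \eqref{fn90}. Your explicit remark on why the restriction $\Th=\R^n$ is dispensable here is a nice addition that the paper leaves implicit.
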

\begin{proof} Define $p_{k+1}$, $u_{k+1}$, and $v_{k+1}$ as the beginning of   the proof of Theorem~\ref{psqp} and observe that \eqref{fn88}-\eqref{pe20}
hold.   Since $\olm$ is noncritical, it follows from  Theorem~\ref{chcri} that there are  neighborhoods
$U$ of $(0,0)\in \R^n\times \R^m$ and $V$ of $(\ox,\olm)$  and a constant $\kappa\ge 0$ such that 
for any $(v,p)\in U$ and any  $(x,\lm)\in  S(v,p)\cap V$ the estimate \eqref{semi} holds, where $S$ comes from \eqref{maps}.
By $(x_k,\lm_k)\to (\ox,\olm)$, the condition \eqref{dmc3},  and the definitions of $p_{k+1}$, $u_{k+1}$, and $v_{k+1}$, we arrive at $p_{k+1}\to 0$, $u_{k+1}\to 0$, and $v_{k+1}\to 0$
as $k\to \infty$. So by \eqref{pe20}, we can assume with no harm that $(v_{k+1},p_{k+1})\in U$  and  $( x_{k+1},\lm_{k+1})\in S( v_{k+1},p_{k+1})\cap V$ for all $k\in \N$. 
 Appealing now to \eqref{semi} and   \eqref{dmc3} yields the estimates 
\begin{eqnarray*}
\|x_{k+1}-\ox\| &\le & \kappa\big(\|v_{k+1}\|+\|p_{k+1}\|\big)\\
&\le & \kappa\big(\| u_{k+1}\|+ \|q_{k+1}\|+ \|p_{k+1}\|\big)= o(\|x_{k+1}-x_k\|),
\end{eqnarray*}
Following a similar argument as the proof of  \eqref{fn90} justifies the superlinear convergence of   the primal sequence $\{x_k\b$. 
\end{proof}

{\bf Acknowledgements.} We thank the two anonymous reviewers whose comments and suggestions helped improve the original presentation of the paper.  


\small

\end{document}